\def\blu{\color[rgb]{0,0,1}}
\newcommand{\belabel}[1]{\begin{equation}\label{#1}}
\newcommand{\C}{{\mathbb C}}
\newcommand{\rr}{\mathbb{R}}
\newcommand{\fco}{\mathfrak{co}}
\newcommand{\fg}{\mathfrak g}
\newcommand{\fh}{\mathfrak h}
\newcommand{\fp}{\mathfrak p}
\newcommand{\fhol}{\mathfrak{hol}}
\newcommand{\fgl}{\mathfrak{gl}}
\newcommand{\fsl}{{\mathfrak{sl}}}
\newcommand{\fso}{{\mathfrak{so}}}
\newcommand{\fsp}{{\mathfrak{sp}}}
\newcommand{\fspin}{{\mathfrak{spin}}}
\newcommand{\fsu}{{\mathfrak{su}}}
\newcommand{\hol}{\mathfrak{hol}}
\newcommand{\R}{{\mathrm{R}}}
\newcommand{\tr}{\mathrm{tr}}
\newcommand{\Id}{{\mathrm{Id}}}
\newcommand{\beq}{\begin{eqnarray*}}
\newcommand{\eeq}{\end{eqnarray*}}
\newcommand{\be}{\begin{eqnarray}}
\newcommand{\ee}{\end{eqnarray}}
\newcommand{\Ric}{{\mathrm{Ric}}}
\newcommand{\Sch}{{\mathsf{P}}^g}
\newcommand{\beqn}{\begin{equation}}
\newcommand{\eeqn}{\end{equation}}
\newcommand{\aM}{\widetilde{M}}
\newcommand{\ag}{\widetilde{g}}
\newcommand{\aR}{\widetilde{\R}}
\theoremstyle{definition}
\newtheorem{re}{Remark}[section]
\newtheorem{bsp}{Example}[section]
\newtheorem*{bsp*}{Example}
\newtheorem*{def*}{Definition}
\theoremstyle{plain}
\newtheorem{Lemma}{Lemma}[section]
\newtheorem*{lem*}{Lemma}
\newtheorem{Proposition}{Proposition}[section]
\newtheorem{Corollary}{Corollary}[section]
\newtheorem{Theorem}{Theorem}[section]
\newtheorem*{theo*}{Theorem}
\newtheorem*{conj*}{Conjecture}
\newcommand{\Hol}{\mathrm{Hol}}
\newcommand{\rk}{\mathrm{rk}}
\renewcommand{\aa}{{\bar{a}}}
\newcommand{\bb}{{\bar{b}}}
\newcommand{\s}{\mathrm{s}}
\newcommand{\+}{\oplus}
\newcommand{\Spin}{\mathbf{Spin}}
\newcommand{\spin}{\mathfrak{spin}}
\newcommand{\SO}{\mathbf{SO}}
\newcommand{\so}{\mathfrak{so}}
\newcommand{\Sp}{\mathbf{Sp}}
\renewcommand{\sp}{\mathfrak{sp}}
\newcommand{\csp}{\mathfrak{csp}}
\newcommand{\1}{\mathbf{1}}
\newcommand{\J}{\mathbf{J}}
\newcommand{\g}{\mathbf{g}}
\newcommand{\scal}{\mathrm{scal}}
\newcommand{\del}{\partial}
\newcommand{\e}{\mathrm{e}}
\newcommand{\bleml}[1]{\begin{Lemma} \label{#1}}
\newcommand{\blem}{\begin{Lemma}}
\newcommand{\elem}{\end{Lemma}}
\newcommand{\btheo}{\begin{Theorem}}
\newcommand{\btheol}[1]{\begin{Theorem}\label{#1}}
\newcommand{\etheo}{\end{Theorem}}
\newcommand{\bpropl}[1]{\begin{Proposition} \label{#1}}
\newcommand{\bprop}{\begin{Proposition}}
\newcommand{\eprop}{\end{Proposition}}
\newcommand{\bcorl}[1]{\begin{Corollary} \label{#1}}
\newcommand{\bcor}{\begin{Corollary}}
\newcommand{\ecor}{\end{Corollary}}
\newcommand{\bbem}{\begin{re}}
\newcommand{\ebem}{\end{re}}
\newcommand{\bprf}{\begin{proof}}
\newcommand{\eprf}{\end{proof}}
\begin{document}
\title[The ambient obstruction tensor and conformal holonomy]{The ambient obstruction tensor and conformal holonomy}
\subjclass[2010]{Primary 53A30, Secondary 53C29}
\keywords{Fefferman-Graham ambient metric, obstruction tensor, conformal holonomy, exceptional  conformal structures, normal conformal Killing forms}

\author{Thomas Leistner}\address[TL]{School of Mathematical Sciences, University of Adelaide, SA 5005, Australia} \email{\tt thomas.leistner@adelaide.edu.au}
\author{Andree Lischewski}  
\address[AL]{Institut f\"{u}r Mathematik, Humboldt-University Berlin,
Unter den Linden 6,
10099 Berlin, Germany} \email{\tt  lischews@math.hu-berlin.de}
\thanks{This research was supported by the Australian Research
Council via the grants FT110100429 and DP120104582.}
\date{\today}

\begin{abstract}
For a conformal manifold, we  describe a new relation between the ambient obstruction tensor of Fefferman and Graham and the holonomy of the normal conformal Cartan connection. This relation allows us to prove several  results on the vanishing and the rank of the obstruction tensor, for example  for conformal structures admitting twistor spinors or normal conformal Killing forms. As our main tool we introduce the notion of a conformal holonomy distribution and show that its integrability is closely related to the exceptional  conformal structures in dimensions five and six that  were  found by Nurowski and Bryant. 
\end{abstract}

\maketitle


\section{Introduction}
A conformal structure  of signature $(p,q)$ on a smooth manifold $M$ is an equivalence class $c$ of semi-Riemannian metrics on $M$ of signature $(p,q)$, where two metrics $g$ and $\hat g$ are equivalent if  
$\hat g=\mathrm{e}^{2f}g$ for a smooth function $f$. For conformal structures the construction of local invariants is more complicated than for semi-Riemannian structures, where all local invariants can be derived from the Levi-Civita connection and its curvature. 
For conformal geometry, essentially there are two invariant constructions: the conformal ambient metric of Fefferman and Graham \cite{fefferman/graham85,fefferman-graham07} and the normal conformal Cartan connection \cite{cartan23} with the induced tractor calculus \cite{bailey-eastwood-gover94}. In the present article we will investigate a new relationship between two essential ingredients of these invariant constructions, the {\em ambient obstruction tensor} on one hand, and the {\em conformal holonomy} on the other. We will briefly introduce these notions:

The ambient metric construction assigns to any conformal manifold $(M,[g])$ of signature $(p,q)$ and dimension $n$  a pseudo-Riemannian metric $\ag$ on an open neighbourhood $\aM$ of $Q=M\times \rr^{>0}$ in $\rr\times Q$ with specific properties that link $[g]$ and  $\ag$ as closely as possible. More precisely, 
denoting the the coordinates on $\rr^{>0}$ and $\rr$ by $t$ and $\rho$, respectively, 
$\ag$ is required to restrict to $t^2g$ along $Q$ and moreover its Ricci-tensor vanishes along $Q$ to infinite order in $\rho$ when $n$ is odd and  to order $\rho^{\frac{n}{2}-1}$ when $n$ is even. The seminal result in \cite{fefferman/graham85,fefferman-graham07} is that for  smooth conformal structures, such an ambient metric always exists and is unique to all orders for $n$ odd and up to order $\rho^{\frac{n}{2}-1}$ when $n$ is even. Moreover, in even dimensions the existence of an  ambient metric whose Ricci tensor vanishes along $Q$ to all orders is closely related to the vanishing of a certain symmetric, divergence-free and conformally covariant $(0,2)$-tensor $\cal O$ on $M$, the {\em Fefferman-Graham obstruction tensor} or {\em ambient obstruction tensor}. In four dimensions the obstruction tensor is given by the well known Bach tensor, but in general even dimension no general explicit formula for $\cal O$ exists.  The obstruction tensor will be in the focus of the present article. 

The other invariant construction in conformal geometry is the {\em normal conformal Cartan connection}. This is an $\so(p+1,q+1)$-valued Cartan connection defined on a $P$-bundle, where $P$ is the parabolic subgroup defined by the stabilizer in $\mathbf{O}(p+1,q+1)$ of a lightlike  line in $\rr^{p+1,q+1}$, and it satisfies a certain normalisation condition that defines it uniquely. The normal conformal Cartan connection defines a covariant derivative $\nabla^{nc}$ on a vector bundle $\cal T$, the {\em  conformal tractor connection}  on the {\em standard tractor bundle}. To $(\cal T, \nabla^{nc})$ one can associate the holonomy group  of $\nabla^{nc}$-parallel transports along loops based at $x\in M$. As this group only depends on the conformal structure, it is denoted by $\Hol_x(M,c)$ and called the {\em conformal holonomy}. It is contained in $\mathbf{O}(p+1,q+1)$ and its Lie algebra is denoted by $\fhol_x(M,c)\subset \fso(p+1,q+1)$.

Many interesting conformal structures are related to conformal {\em holonomy reductions}, i.e., conformal structures for which the  conformal holonomy algebra  is a proper subalgebra  of $\fso(p+1,q+1)$. Examples are manifolds admitting twistor spinors, for which the spin representation of the conformal holonomy group admits an invariant spinor. This includes  conformal Fefferman spaces that are closely related to CR-geometry \cite{Fefferman76}, and for which the conformal holonomy reduces to the special unitary group. Another fascinating example are the conformal structures that are determined by generic distributions of rank $2$ in dimension $5$. Such distributions played an important role in the history of the 
simple Lie algebra with exceptional root system $G_2$: 
 Cartan  \cite{cartan93} discovered that for some of these distributions the Lie algebra of symmetries is given by the non-compact exceptional Lie algebra $\fg_2 $ of  type $G_2$. Related to the equivalence problem for such distributions, Cartan constructed the corresponding $\fg_2$-valued  Cartan connection \cite{cartan10}. It was then realised by Nurowski \cite{nurowski04} that to any such distribution one can associate a conformal structure of signature $(2,3)$ whose conformal holonomy is reduced from $\fso(3,4)$ to $\fg_2$. Similarly, Bryant associated to any generic rank $3$ distribution in dimension $6$ a conformal structure of signature $(3,3)$  whose holonomy reduces to $\spin(3,4)\subset\so(4,4)$. Both, and in particular the latter will be relevant in the present article.

The ambient metric construction and the normal conformal Cartan connection turn out to be closely related. Indeed, in \cite{cap-gover03}   tractor data are formulated entirely in terms of ambient data, and in \cite{gope} the  ambient curvature tensors are rewritten in in terms of tractor curvature and derivatives thereof.
The main result  in our paper reveals another interesting correspondence, now between the  the ambient obstruction tensor $\cal O$ and the conformal holonomy. We show that  that the image of  $\cal O$, when    considered as a $(1,1)$-tensor, can be identified with a distinguished subspace in the conformal holonomy algebra $\fhol_x(M,c)$. To be more precise, recall that the Lie algebra $\fso(p+1,q+1)$ is $|1|$-graded as $\fso(p+1,q+1)=\fg_{-1}\+\fg_0\+\fg_{1}$, where $\g_0\simeq \fco(p,q)
$ is the conformal Lie algebra and $\fg_0\+\fg_{1}=\fp$ is the Lie algebra of the parabolic subgroup $P$. It is important to note that $\fg_1$ can be identified with $\rr^{p,q}$ and hence with  the tangent space $T_xM$.
This allows us to prove the following theorem:
\begin{Theorem}\label{maintheo}
Let $(M^{p,q},c)$ be a {smooth} conformal manifold of even dimension $n$ and with ambient obstruction tensor $\mathcal{O}$. Then  the image of $\mathcal{O}$ at $x\in M$ is contained in $\mathfrak{hol}_x(M,c)\cap \fg_1$. In particular, the  rank of $\mathcal{O}$ at each point is limited by the dimension of $\mathfrak{hol}_x(M,c) \cap \mathfrak{g}_1$.
Moreover, 
if $\hol(M,c)$ is a proper subalgebra  of $\fso(p+1,q+1)$ , then the image of $\mathcal{O}$ is totally lightlike. In particular,  $\rk(\mathcal{O}) \leq \text{min}(p,q)$.\end{Theorem}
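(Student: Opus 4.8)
The plan is to reduce the statement to the identity $\mathcal{O}^2=0$ and then to combine the first part of the theorem with the decomposition theory for conformal holonomy; throughout I assume $M$ connected, so that $\hol_x(M,c)$ is independent of $x$ and the hypothesis is pointwise. First, since $\mathcal{O}$ is symmetric, the associated endomorphism $\mathcal{O}\colon T_xM\to T_xM$ is $g$-self-adjoint, so $\ker\mathcal{O}=(\mathrm{Im}\,\mathcal{O})^{\perp}$; hence $\mathrm{Im}\,\mathcal{O}$ is totally lightlike precisely when $\mathrm{Im}\,\mathcal{O}\subseteq\ker\mathcal{O}$, i.e. precisely when $\mathcal{O}^2=0$. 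Since a totally lightlike subspace of $\rr^{p,q}$ has dimension at most the Witt index $\min(p,q)$, the rank estimate will follow once the image is known to be lightlike. Thus it suffices to show: if $\hol(M,c)\subsetneq\fso(p+1,q+1)$, then $\mathcal{O}^2\equiv 0$.

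Suppose this fails, so $\mathcal{O}^2_x\neq 0$ for some $x$. Then $\mathrm{Im}\,\mathcal{O}_x$ is not totally lightlike, so it contains a vector $w$ with $\langle w,w\rangle\neq 0$, and under the identification $\fg_1\cong\rr^{p,q}\cong T_xM$ of the statement the first part of the theorem gives $w\in\hol_x(M,c)\cap\fg_1$; write $Z_w$ for the corresponding element of $\fso(p+1,q+1)$. A short computation shows that $Z_w$ acts on the standard tractor fibre $\mathcal{T}_x$ as a nilpotent of Jordan type $[3,1^{n-1}]$: it annihilates the canonical line $\mathcal{T}^1_x$, maps $\mathcal{T}^0_x$ into $\mathcal{T}^1_x$, and has $\mathrm{Im}\,Z_w^2=\mathcal{T}^1_x\neq 0$, so in particular $Z_w^2\neq 0$. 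It remains to rule out the containment of such a $Z_w$ in a proper conformal holonomy algebra.

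For this I would use the de Rham--Wu/Berger structure theory applied to the holonomy of the tractor connection, which by \cite{cap-gover03,gope} is the ambient Levi-Civita connection $\anab$. If $\hol_x$ acts irreducibly on $\mathcal{T}_x$ then, being proper and the holonomy of a metric that is Ricci-flat to high order and carries the ambient Euler field $\widetilde{T}$ with $\anab\widetilde{T}=\Id$, it must be one of the holonomies forcing Ricci-flatness; then an ambient metric that is Ricci-flat to all orders exists and $\mathcal{O}\equiv 0$, contradicting $\mathcal{O}^2_x\neq 0$. Hence $\hol_x$ preserves a proper nonzero subspace $\mathcal{U}\subsetneq\mathcal{T}_x$. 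If some such $\mathcal{U}$ is non-degenerate, the tractor holonomy de Rham--Wu splits and $(M,c)$ is locally a special Einstein product, whose ambient metric is again Ricci-flat, so once more $\mathcal{O}\equiv 0$. Thus every proper invariant subspace is degenerate, so $\mathcal{N}:=\mathcal{U}\cap\mathcal{U}^{\perp}$ is a nonzero $\hol_x$-invariant totally lightlike subspace of $\mathcal{T}_x$. In this remaining case the goal is to show that $Z_w$, and with it all of $\mathrm{Im}\,\mathcal{O}_x$ (as a subspace of $T_xM\cong\fg_1$), is controlled by $\mathcal{N}$ to the extent that $w$ must be isotropic; this contradicts $\langle w,w\rangle\neq 0$. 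Combining the three cases, $\mathcal{O}^2_x=0$ for every $x$, which by the first step completes the proof.

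The main obstacle is exactly this last case: extracting, from the mere existence of a parallel totally lightlike tractor distribution $\mathcal{N}$, enough information to force $\mathrm{Im}\,\mathcal{O}$ to be lightlike. My plan there is to re-run the proof of the first part of the theorem --- which realizes $\mathrm{Im}\,\mathcal{O}_x\subseteq\hol_x\cap\fg_1$ via an explicit combination of the tractor curvature and its covariant derivatives, equivalently via the coefficient of $\rho^{\frac{n}{2}-1}$ in $\aRic$ along $Q$ --- and to track how the $\hol_x$-invariance of $\mathcal{N}$ constrains that combination. Here the facts that $\mathcal{O}$ is trace-free and divergence-free, which on the ambient side say that the obstructing component of $\aRic$ along $Q$ is tangential and isotropic, should be the decisive input. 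A secondary point requiring care is the special Einstein product case, where one must confirm the vanishing of $\mathcal{O}$ from the explicit form of the corresponding ambient metrics.
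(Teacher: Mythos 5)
Your reduction of ``totally lightlike image'' to $\mathcal{O}^2=0$ and the Witt-index bound $\rk(\mathcal{O})\le\min(p,q)$ are fine, but the core of your argument for the ``moreover'' part --- a trichotomy for the tractor holonomy via de Rham--Wu/Berger theory --- has a genuine gap in two of its three branches. In your case (a) you claim that an irreducible proper $\fhol(M,c)$, being ``the holonomy of the ambient metric'', must force Ricci-flatness and hence $\mathcal{O}\equiv 0$. This is false: Fefferman conformal structures have irreducible proper holonomy contained in $\fsu(r+1,s+1)$ and in general nonvanishing $\mathcal{O}$ (only $\rk(\mathcal{O})\le 1$ holds, cf.\ Corollary \ref{cor1}), and the paper even records Bryant structures with $\fhol(M,c)=\spin(3,4)\varsubsetneq\fso(4,4)$ and $\rk(\mathcal{O})=3$. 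The error comes from identifying conformal holonomy with ambient Levi-Civita holonomy: in even dimension the ambient metric is determined and Ricci-flat only to finite order, and the result of \cite{CapGoverGrahamHammerl15} identifies the conformal infinitesimal holonomy only with the truncated ambient infinitesimal holonomy $\mathfrak{hol}^{\frac{n}{2}-2}$, so no ``special holonomy forces Ricci-flatness to all orders, hence $\mathcal{O}=0$'' argument is available; the correct conclusion in the irreducible case is merely that the image is lightlike, which is exactly what remains to be proved. Your case (c) (invariant totally lightlike tractor subspace) is the heart of the matter and you leave it open; note moreover that it cannot be closed by pointwise algebra (Jordan type of $Z_w$ plus properness of $\fhol$), because the parabolic $\fp=\fg_0\oplus\fg_1$ is itself a proper subalgebra containing $s_-^\flat\wedge w^\flat$ for every non-null $w$ --- any successful argument must exploit that $\fhol$ is a \emph{holonomy} algebra, i.e.\ closed under parallel transport and under covariant differentiation of curvature, which your purely algebraic plan never invokes.

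The paper's proof of this part uses no decomposition theory at all. It works with the holonomy distribution $\mathcal{E}_x=\fhol_x(M,c)\cap\fg_1\subset T_xM$: after producing $\mathcal{E}$-adapted open sets with smooth local sections (Theorem \ref{eadapated}), one differentiates $s_-^\flat\wedge V^\flat$ with $\nabla^{nc}$ via \eqref{derform} and takes brackets (Propositions \ref{lightlike} and \ref{furr}) to show that a section $V$ of $\mathcal{E}$ with $g(V,V)\neq 0$ successively forces $\fg_1$, $\fg_0$ and $\fg_{-1}$ into the infinitesimal holonomy, i.e.\ $\fhol(M,c)=\fso(p+1,q+1)$. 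Hence, if $\fhol(M,c)$ is proper, $\mathcal{E}$ --- and with it $\mathrm{Im}(\mathcal{O})$, by the first part of the theorem --- is totally lightlike on an open dense set and therefore everywhere by continuity. Finally, be aware that your text also takes the first (and hardest) assertion of the statement, $\mathrm{Im}(\mathcal{O}_x)\subset\fhol_x(M,c)\cap\fg_1$, entirely for granted; in the paper this is Theorem \ref{obstrhol} and needs the full ambient machinery (the relation \eqref{holeq}, the $\slashed{\Delta}$-identity, and the argument that the coefficient $l(n)$ vanishes because the ambiguity $\mathrm{tf}(\partial_\rho^{n/2}g_{ij})$ is arbitrary, plus the separate Bach-tensor case $n=4$), so as written your proposal proves neither part completely.
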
 
The implications  of this result are evident: 
One the one hand it shows that if the obstruction tensor has maximal rank $n$ at some point, then the holonomy is generic. Hence, 
 $\mathcal{O}$ can  be interpreted as a universal obstruction to the existence of parallel tractors on $(M,c)$ of any type. Namely for such a tractor to exist, $\mathcal{O}$ needs to have a nontrivial kernel everywhere. 
On the other hand, 
conformal holonomy reductions can be used to restrict the rank of the obstruction tensor. For example, it is well known that the existence of a parallel standard tractor (and hence of a local Einstein metric in $c$) forces the obstruction tensor to vanish, however no substantially more  general conditions on the conformal class are known to have a similar effect on $\cal O$. Our results provide such conditions. For example, we obtain:
\begin{Corollary}\label{cor0}
Under the assumptions of Theorem \ref{maintheo},    $\mathcal{O}=0$ for each of the following cases:
\begin{enumerate}
\item the conformal structure is Riemannian and $\fhol(M,c)\varsubsetneq\fso(1,n+1)$;
\item the conformal structure is Lorentzian and $\fhol(M,c)\varsubsetneq\fsu(1,n/2)$;
\item the conformal class contains an almost Einstein metric or special Einstein product (in the sense of \cite{GoverLeitner09}),
\item there is a normal conformal vector field $V$ of nonzero length or the dimension of the space of normal conformal vector fields is $\geq 2$. In particular, this is the case for Fefferman spaces over quaternionic contact structures in signature $(4k+3,4l+3)$ (characterised by $\fhol(M,c) \subset \fsp(k+1,l+1)$),
\item $(M,c)$ is spin and for $g \in c$ with spinor bundle $S^g$ there are twistor spinors $\varphi_{i=1,2} \in \Gamma(M,S^g)$ such that the spaces $\{ X \in TM \mid X \cdot \varphi_i = 0 \}$ are complementary at each point.
\end{enumerate}
\end{Corollary}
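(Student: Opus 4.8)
The plan is to derive all five cases directly from \Theo{maintheo}. By that result, at each $x\in M$ the image of $\mathcal{O}$, regarded as an endomorphism of $T_xM$, is a totally lightlike subspace $L\subseteq T_xM$ which, under the identification $T_xM\cong\fg_1\subset\fso(p+1,q+1)$, sits inside $\fhol_x(M,c)\cap\fg_1$; so it suffices to show $L=0$ in every case. I will work in the standard model: fix lightlike $e_\pm$ with $\langle e_+,e_-\rangle=1$, put $\mathbb{E}:=\{e_+,e_-\}^\perp\cong\rr^{p,q}\cong T_xM$, and for $v\in\mathbb{E}$ let $Z_v\in\fg_1$ be given by $Z_ve_+=0$, $Z_ve_-=v$ and $Z_vw=-\langle v,w\rangle e_+$ for $w\in\mathbb{E}$. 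When $v$ is lightlike one has $Z_v^2=0$ and $\operatorname{Im}(Z_v)=\langle v,e_+\rangle$ is a totally lightlike $2$-plane contained in $\ker Z_v$; the elements $Z_v$ with $v\in L$ are exactly of this kind, and our task becomes to rule out $Z_v\neq0$.

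Case (1) is immediate: in definite signature a totally lightlike subspace is trivial, so the second assertion of \Theo{maintheo} already forces $\mathcal{O}=0$. Cases (3) and (4) rest on one linear-algebra fact. An almost Einstein scale, a special Einstein product, or a normal conformal vector field $V$ each provides a $\nabla^{nc}$-parallel object on the standard tractor bundle: a parallel standard tractor $I$, a parallel orthogonal splitting $\rr^{p+1,q+1}=\mathbb{V}_1\oplus\mathbb{V}_2$ into non-degenerate summands, or a parallel adjoint tractor $\widehat V\in\fso(p+1,q+1)$ projecting onto $V$. Being parallel, such an object is annihilated (respectively, preserved, centralised) by $\fhol_x$, hence by every $Z_v\in\fhol_x\cap\fg_1$; substituting $Z_v$ into the model then forces $v=0$. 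For a parallel tractor $I=ae_++be_-+w$ with $w\in\mathbb{E}$ one computes $Z_vI=bv-\langle v,w\rangle e_+$, so $Z_vI=0$ gives $bv=0$, and the (almost) Einstein hypothesis makes $b\neq0$ on a dense open set, whence $v=0$ there and $\mathcal{O}=0$ by continuity. The non-degenerate parallel splitting and the case $|V|\neq0$ are treated in the same spirit: the rank-two, totally lightlike image $\langle v,e_+\rangle$ of $Z_v$ cannot respect the decomposition, respectively commute with $\widehat V$, unless $v=0$; when instead the space of normal conformal vector fields has dimension $\geq2$ one argues similarly, also using the bracket of the corresponding parallel adjoint tractors. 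The statement on quaternionic-contact Fefferman spaces then follows because these carry at least three normal conformal vector fields.

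For case (5) one passes to the tractor spinor bundle. The twistor spinors $\varphi_1,\varphi_2$ correspond to $\nabla^{nc}$-parallel tractor spinors $\psi_1,\psi_2$, so every $Z_v\in\fhol_x\cap\fg_1$ annihilates both under the Clifford action, where $Z_v$ acts---up to a nonzero constant---as Clifford multiplication by $v\cdot e_+$. A short computation in the metric splitting of the tractor spinor bundle identifies the condition $Z_v\cdot\psi_i=0$ with $v\in U_i:=\{X\in T_xM\mid X\cdot\varphi_i=0\}$, and the complementarity hypothesis then yields $v\in U_1\cap U_2=0$, so $\mathcal{O}=0$.

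Case (2) is the hard one, and where I expect the main obstacle to lie. Since the signature is Lorentzian, $\min(p,q)=1$, so \Theo{maintheo} only gives $\rk(\mathcal{O})\leq1$ and one must exclude a one-dimensional lightlike image; moreover the strict inclusion $\fhol(M,c)\subsetneq\fsu(1,n/2)$ has to be used essentially, since it is not enough that $\fhol(M,c)$ merely be contained in $\fsu(1,n/2)$. My plan is to invoke the structure theory of conformal holonomy inside $\fsu(1,n/2)$---equivalently, of Fefferman conformal structures over CR manifolds: a proper holonomy reduction within $\fsu(1,n/2)$ should either produce a non-lightlike parallel tractor, reducing to case (3), or force a further parallel tractor structure (a non-degenerate or a lightlike complex subbundle) that is again incompatible with a nonzero $Z_v$. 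Making ``proper subalgebra of $\fsu(1,n/2)$'' concrete enough to exclude $\fhol_x\cap\fg_1\ni Z_v\neq0$ is the crux of the proof.
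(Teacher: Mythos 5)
Your cases (1), the Einstein half of (3), the nonzero-length half of (4) and case (5) are correct and essentially reproduce the paper's own arguments (annihilation of a parallel standard tractor, commutation with a parallel adjoint tractor, annihilation of a parallel tractor spinor, combined with the total lightlikeness of $\mathrm{Im}(\mathcal{O})$ from \Theo{maintheo}). The remaining items contain genuine gaps. For case (2) you offer only a plan, and the incompatibility you hope for does not exist: a proper reduction inside $\fsu(1,n/2)$ may fix a totally lightlike complex line, i.e.\ a totally lightlike $2$-plane in $\mathbb{R}^{2,n}$, and a nonzero $Z_v\in\fg_1$ is perfectly compatible, as a pointwise algebraic datum, with stabilising such a plane; this branch yields only $\mathrm{Im}(\mathcal{O})\subset\mathbb{R}U$ for a recurrent null vector field $U$ (Corollary \ref{splitnd-cor}), not $\mathcal{O}=0$. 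The paper's proof needs two inputs absent from your sketch: (i) the classification results of \cite{Di-ScalaLeistner11} and \cite{alt-discala-leistner14}, which force a proper subalgebra of $\fsu(1,n/2)$ occurring as conformal holonomy to act reducibly, and (ii) in the totally lightlike case a genuinely geometric argument: since $\fhol(M,c)\subset\fsu(1,n/2)$ the structure is Fefferman, so $\mathrm{Im}(\mathcal{O})\subset\mathbb{R}V_F$ with $V_F$ the twisting Fefferman vector field, while $\mathrm{Im}(\mathcal{O})\subset\mathbb{R}U$ with $U$ recurrent and twist-free \cite{BaumLeitner04}; a nonvanishing $\mathcal{O}$ would force $\mathbb{R}V_F=\mathbb{R}U$ on an open set, a contradiction. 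No substitution of $Z_v$ into the flat model can replace this step.

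The special Einstein product part of (3) also fails as written. It is not true that a nonzero $Z_v$ cannot preserve a non-degenerate orthogonal decomposition $\mathbb{R}^{p+1,q+1}=\mathbb{V}_1\oplus\mathbb{V}_2$: since skew endomorphisms of non-degenerate spaces have even rank, $Z_v$ preserves the splitting precisely when the totally lightlike plane $\mathrm{span}(e_+,v)$ lies entirely in one summand, which is possible whenever that summand is sufficiently indefinite and contains the canonical tractor $e_+$ (your $e_+$ is the paper's $s_-$). So your pointwise argument only works at points where $\mathcal{I}_x\not\subset\mathbb{V}_1\cup\mathbb{V}_2$, and you would still have to show that such points are dense before concluding by continuity of $\mathcal{O}$. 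The paper avoids this by a non-pointwise route: the decomposable parallel tractor form produces a nc-Killing form whose span is a non-degenerate distribution $H\subset TM$ with $\mathrm{Im}(\mathcal{O})\subset\mathcal{E}\subset H$ (Proposition \ref{ncform}), while $\mathcal{E}$ is totally lightlike by Proposition \ref{furr}; both facts are obtained by differentiating holonomy elements along curves, i.e.\ by using that $\fhol(M,c)$ is the holonomy of $\nabla^{nc}$ and not merely some subalgebra stabilising the parallel object. A milder instance of the same over-reliance on pointwise algebra occurs in your treatment of the ``dimension $\geq 2$'' part of (4): two linearly independent nc vector fields must be shown to be pointwise non-proportional on a dense set before $\mathbb{R}V_1\cap\mathbb{R}V_2=0$ can be invoked.
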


\begin{Corollary}
\label{cor1}
Under the assumptions of Theorem \ref{maintheo},  $\rk(\mathcal{O}) \leq 1$ for each of the following  cases:
\begin{enumerate}
\item $(p,q) = (3,3)$ and $\fhol(M,c)\varsubsetneq\spin(3,4)$;
\item $(p,q) = (n,n)$ and $\fhol(M,c) \subset \fgl(n+1)$;
\item $\Hol(M,c)$ fixes a nontrivial $2$-form, i.e., $(M,c)$ admits a normal conformal vector field. In particular, this applies to Fefferman conformal structures, i.e.,  to $(p,q)=(2r+1,2s+1)$ and $\fhol(M,c) \subset \fsu(r+1,s+1)$;
\item the action of $\Hol(M,c)$ on the light cone $\mathcal{N} \subset \mathbb{R}^{p+1,q+1}$ does not have an open orbit. 
\end{enumerate}
For each of these geometries one can give an explicit subspace $V \subset TM$ with  $\mathrm{Im}(\mathcal{O}) \subset V$ at each point.
\end{Corollary}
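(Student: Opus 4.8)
The plan is to derive every case from \Theo{maintheo} by producing, inside $T_xM\cong\fg_1$, an explicit subspace containing $\mathrm{Im}(\mathcal{O}_x)$ that admits no totally lightlike subspace of dimension $\ge 2$ --- concretely a line, or a subspace on which the conformal metric is semidefinite with one-dimensional radical. In each of the four cases $\fhol(M,c)$ is a proper subalgebra of $\fso(p+1,q+1)$: in (1) since $\spin(3,4)\varsubsetneq\fso(4,4)$, in (2) since $\fgl(n+1)\varsubsetneq\fso(n+1,n+1)$, in (3) since the full orthogonal algebra (being semisimple with trivial centre) admits no nonzero invariant $2$-form, and in (4) since $\mathbf{O}(p+1,q+1)$ has an open orbit on $\mathcal{N}$ (it is transitive on $\mathcal{N}\setminus\{0\}$). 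Hence \Theo{maintheo} applies and gives $\mathrm{Im}(\mathcal{O}_x)\subseteq\fhol_x\cap\fg_1$ with $\mathrm{Im}(\mathcal{O}_x)$ totally lightlike in $\fg_1\cong\rr^{p,q}\cong T_xM$; since $\mathcal{O}_x$ is symmetric with isotropic image one also gets $\mathcal{O}_x^2=0$, so $\mathrm{Im}(\mathcal{O}_x)\subseteq\ker\mathcal{O}_x$. It remains to bound this totally lightlike subspace in each case.

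\emph{Cases (1), (2), (3).} These are all governed by a $\nabla^{nc}$-parallel tractor tensor $s$: a (possibly degenerate) invariant $2$-form $\omega$ in case (3) --- for Fefferman spaces, where $\fhol\subseteq\fsu(r+1,s+1)\subseteq\mathfrak{u}(r+1,s+1)$, the nondegenerate Kähler form; the nondegenerate Kähler form of the invariant para-complex structure in case (2), where $\fgl(n+1)$ is the $\mathbf{O}(n+1,n+1)$-stabilizer of two transverse maximal isotropics; and a non-null half-spinor $s^+$ on $\rr^{4,4}=\Delta_{3,4}$ in case (1), since $\spin(3,4)$ is the $\mathbf{O}(4,4)$-stabilizer of such a half-spinor. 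In all three cases parallelity forces $\fhol_x$ into the stabilizer of $s_x$, so $\mathrm{Im}(\mathcal{O}_x)\subseteq\fg_1\cap\mathfrak{stab}(s_x)$, and the task is to compute this intersection in a frame $\rr^{p+1,q+1}=\rr e_-\oplus\rr^{p,q}\oplus\rr e_+$ adapted to the $|1|$-grading, with $\ell_x=\rr e_-$ the canonical null line. Writing $n_v\in\fg_1$ for $v\in\rr^{p,q}$, the condition $n_v\cdot s_x=0$ is linear in $v$; for a $2$-form its leading part, obtained by evaluating $n_v\cdot\omega_x$ on $\rr^{p,q}\times\rr^{p,q}$, reads $\langle v,u\rangle\,\alpha_x(w)=\langle v,w\rangle\,\alpha_x(u)$ with $\alpha_x:=\iota_{e_-}\omega_x$. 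When $\alpha_x\ne 0$ --- automatic when $\omega$ is nondegenerate, hence in case (2) and for Fefferman spaces --- this forces $v\in\rr\,\alpha_x^\sharp$, and $\alpha_x^\sharp$ is exactly the value $V_x$ of the normal conformal vector field determined by $\omega$ (it is the $\fg_{-1}$-slot of $\omega$). Thus $\mathrm{Im}(\mathcal{O}_x)\subseteq\rr V_x$ and $\rk(\mathcal{O})\le 1$, with explicit $V=\rr V$. The half-spinor case is handled by the analogous computation with Clifford multiplication replacing $\iota$: $n_v\cdot s^+_x=0$ becomes a null-Clifford condition on $v$ that, because $\mathrm{Im}(\mathcal{O}_x)$ is the image of a symmetric operator, confines it to a line expressible through the twistor-spinor data $\{X:X\cdot\varphi=0\}$ on $M$; this is consistent with Bryant's generic rank-$3$ structures, where $\fhol=\spin(3,4)$ and $\rk(\mathcal{O})=1$. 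On the locus where the projecting part of $s$ vanishes (possible only for a degenerate $\omega$ in case (3)) one concludes by continuity / real-analyticity of $\mathcal{O}$, or by feeding the next filtration slot of the parallel tractor into the same computation.

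\emph{Case (4), and the main obstacle.} This case is not controlled by a single invariant tensor and is the crux. The tangent space at $[\ell_x]$ to the $\Hol_x$-orbit in $\mathbb{P}\mathcal{N}$ equals $(\fhol_x\cdot\ell_x+\ell_x)/\ell_x\subseteq\ell_x^\perp/\ell_x\cong\rr^{p,q}$, and the hypothesis says this subspace $\mathcal{H}^{\mathrm{orb}}_x$ is proper in $\rr^{p,q}$ for every $x$; an explicit $V$ with $\mathrm{Im}(\mathcal{O})\subseteq V$ will be read off from $\mathcal{H}^{\mathrm{orb}}$ and its orthogonal complement. The argument I would run is by contradiction: if $\rk(\mathcal{O}_x)\ge 2$, then the $\ge 2$-dimensional totally lightlike subspace $\mathrm{Im}(\mathcal{O}_x)\subseteq\fg_1\cap\fhol_x$, together with the Ambrose--Singer generators of $\fhol_x$ --- the tractor curvature of the normal Cartan connection and its iterated covariant derivatives at $x$, of which $\mathcal{O}$ itself is a component --- should generate a subalgebra of $\fso(p+1,q+1)$ moving $[\ell_x]$ onto an open subset of $\mathcal{N}$, contradicting the hypothesis. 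The main obstacle is to make this implication precise: the $\fg_1$-part of $\fhol_x$ lies in the parabolic $\fp$ and therefore fixes $\ell_x$, so openness of the orbit can only come from the interplay with the curvature terms that populate the $\fg_{-1}$- and $\fg_0$-directions of $\fhol_x$. I expect one has to invoke here the structural results of the body on the conformal holonomy distribution $\mathcal{H}$ --- in particular its integrability and the resulting link to the exceptional Nurowski and Bryant structures --- rather than soft representation theory alone.
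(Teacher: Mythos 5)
Your treatment of cases (2) and (3) is essentially sound: for (3) it reproduces the paper's mechanism (the condition $(s_-^{\flat}\wedge V^{\flat})\#\widehat{\alpha}=0$ forces $\mathcal{E}\subset\rr V$ for the nc vector field $V$, i.e.\ Proposition \ref{ncform} and Corollary \ref{ncvf}), and for (2) your para-K\"ahler reduction is a legitimate alternative to the paper's argument, which instead intersects the two parallel lightlike distributions $L\cap L'$ coming from Proposition \ref{splitndlight}. One caveat: your claim that nondegeneracy of $\omega$ makes the projecting slot $\alpha_x=\iota_{s_-}\omega_x|_{T_xM}$ nonzero at every point is false in the para-complex case --- it vanishes exactly where the canonical line $\mathcal{I}$ sits inside one of the invariant isotropics --- so even in case (2) you must argue that this locus has empty interior (a parallel tractor form whose projecting slot vanishes on an open set vanishes there entirely) and then use closedness of $\{\rk(\mathcal{O})\le 1\}$.

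The genuine gaps are in cases (1) and (4). For (1), your stabilizer computation cannot give a line: the intersection of $\fg_1$ with the stabilizer of a generic half-spinor $\psi$, i.e.\ with $\spin(3,4)$ itself, is \emph{three}-dimensional --- it is precisely the generic rank-$3$ distribution $\cal D$ cut out by $X\cdot\varphi=0$ (Corollaries \ref{bryantcor1} and \ref{bryantcor2}); symmetry of $\mathcal{O}$ gives no further reduction, and indeed the paper exhibits Bryant structures with $\fhol=\spin(3,4)$ and $\rk(\mathcal{O})=3$, so your assertion that $\rk(\mathcal{O})=1$ for such structures is false and any argument using only $\fhol\subset\spin(3,4)$ (yours never uses the strict inclusion $\fhol\varsubsetneq\spin(3,4)$) proves too much. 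The paper's proof of (1) (Proposition \ref{spincase}) needs more machinery: if $r^{\cal E}=3$ on an open set, differentiating sections $s_-^{\flat}\wedge V_i^{\flat}$ of the holonomy bundle produces $15$ independent elements of $\fhol\cap\fp$, and together with Proposition \ref{openorbit} this forces $\dim\fhol\ge 21$, hence $\fhol=\spin(3,4)$ by Lemma \ref{2128lemma}, contradicting properness; a rank-$2$ $\cal E$ is then excluded because Theorem \ref{eint} makes it integrable inside the generic $\cal D$. For (4), which you concede, the missing step is exactly Proposition \ref{openorbit}: if $r^{\cal E}\ge 2$ on an open set, then first and second covariant derivatives of $s_-^{\flat}\wedge V^{\flat}$ yield elements $s_-^{\flat}\wedge s_+^{\flat}+A_1$ and $-X^{\flat}\wedge s_+^{\flat}+A_2$ of $\fhol$ with $A_i\in\fp$, whence $\fhol+\fp=\fso(p+1,q+1)$ and the $\Hol^0$-orbit through $s_-$ is open in $\mathcal{N}$; the contrapositive plus closedness of $\{\rk(\mathcal{O})\le 1\}$ gives the claim. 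The device you are missing throughout is that smooth sections of the holonomy algebra bundle may be differentiated with $\nabla^{nc}$ and remain holonomy-valued; this, not the curved-orbit or integrability theory you gesture at, is what converts a two-dimensional lightlike $\cal E$ into an open orbit.
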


Two results in these corollaries can be found in the literature --- the statement about 
 almost Einstein \cite{fefferman/graham85} and special Einstein products \cite{GoverLeitner09} in Corollary \ref{cor0} and the statement about 
 Fefferman conformal structures  \cite{GrahamHirachi08} in Corollary \ref{cor1} ---  but the general theory as developed here allows alternative proofs of these facts.
Note also that the last two conditions in Corollary \ref{cor0} are conformally invariant and do not refer to a distinguished metric in the conformal class. 

As the  main tool in proving these results, we introduce what we call {\em the conformal holonomy distribution}. At each point $x\in M$ it is defined as 
\[\cal E_x:= \fhol_x(M,c)\cap \fg_1.\]
The vector space $\cal E_x$  can be canonically identified with a subspace in $T_x M$. When varying $x$, its dimension however may not be constant.
Instead, varying $x$ provides a stratification of the manifold into sets over which the dimension of $\cal E_x$ is constant. We will see in Theorem \ref{cotheo} that these strata are unions of the {\em curved orbits}  defined by conformal holonomy reductions, introduced recently in \cite{cgh} in the context of Cartan geometries.
Moreover we 
 will show that an open and dense set in $M$ can be covered by open sets over which the dimension of $\cal E_x $ is constant.  Very surprisingly, we find that, 
 when considered over such an open set, $\cal E$ is closely related to the aforementioned generic distributions:
 \begin{Theorem}
 Let $(M^{p,q},c)$ be a {smooth}  conformal manifold. Then there is 
 an open and dense set in $M$ that is  covered by open sets $U$ over which $\cal E|_U$ is a vector distribution. Over each such $U$,  $\cal E|_U$ is either integrable, 
 or 
 \begin{itemize}
 \item $(p,q)=(2,3)$ and    $\cal E|_U$  is a generic rank  $2$ distribution, or 
  \item $(p,q)=(3,3)$ and   $\cal E|_U$  is a generic rank  $3$ distribution.
 \end{itemize}
 In both cases, $\cal E|_U $   defines the conformal class $c$ on $U$ in the sense of \cite{nurowski04,bryant06}.
 \end{Theorem}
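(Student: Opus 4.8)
The plan is to analyze the conformal holonomy distribution $\cal E$ pointwise via the algebraic structure of $\fg_1 \cap \fhol_x$ inside the $|1|$-graded Lie algebra $\fso(p+1,q+1)$, and then to exploit the fact that $\fhol_x(M,c)$ is a \emph{holonomy} algebra, i.e., constrained by the Ambrose--Singer theorem applied to the curvature of $\nabla^{nc}$. First I would establish the statement about the open dense set: the function $x \mapsto \dim \cal E_x = \dim(\fhol_x(M,c) \cap \fg_1)$ is lower semicontinuous (since $\fhol_x$ varies by parallel transport and the intersection with the fixed subspace $\fg_1$ can only drop on closed sets), so the set where it attains a local maximum is open, and taking the union over all such local-maximum values yields an open dense set; over each connected component $U$ of such a set the rank is constant, so $\cal E|_U$ is a genuine rank-$k$ distribution.

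Next, on such a $U$ I would study the possible \emph{generic type} of $\cal E|_U$. Because $\fhol$ is a subalgebra of $\fso(p+1,q+1)$, the derived flag of $\cal E|_U$ is controlled by the way $\fg_1 \cap \fhol$ sits inside the graded algebra: using the identification $\fg_1 \cong \rr^{p,q} \cong T_xM$ together with the bracket relations $[\fg_1,\fg_1]=0$, $[\fg_0,\fg_1]\subset\fg_1$, $[\fg_{-1},\fg_1]\subset\fg_0$, I would show that the Lie bracket of sections of $\cal E$, taken modulo $\cal E$, is computed algebraically from the $\fg_0$-component of brackets in $\fhol$ acting back on $\fg_1$. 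This is where the normality of the Cartan connection and the fact that $\fhol$ is generated (via Ambrose--Singer) by curvature and its covariant derivatives enters: it forces strong closure properties, and the upshot — which I expect to be the technical heart — is that either $\cal E|_U$ is involutive, or its growth vector is the maximally non-integrable one of the appropriate rank. The cases $(p,q)=(2,3)$ with $\rk\cal E=2$ and $(p,q)=(3,3)$ with $\rk\cal E=3$ must then be singled out by a dimension count: these are precisely the situations where $\dim \fg_1 = n$ equals $5$ or $6$ and a proper holonomy reduction can produce a bracket-generating distribution of rank $2$ or $3$ whose symbol algebra is, respectively, the nilpotent negative part of $\fg_2$ or of $\spin(3,4)$. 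I would verify, by examining the list of transitive actions and the structure of $\fg_2 \subset \fso(3,4)$ and $\spin(3,4)\subset\fso(4,4)$, that no other non-integrable possibility occurs.

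Finally, I would invoke the results of Nurowski \cite{nurowski04} and Bryant \cite{bryant06}: a generic rank-$2$ distribution in dimension $5$ canonically determines a conformal class of signature $(2,3)$, and a generic rank-$3$ distribution in dimension $6$ a conformal class of signature $(3,3)$; moreover, in both cases the \emph{normal} Cartan connection of the distribution agrees with the normal conformal Cartan connection of the induced conformal structure (this equivalence is established in \cite{nurowski04,bryant06}, see also the holonomy reductions to $\fg_2$ and $\spin(3,4)$). Since on our $U$ the distribution $\cal E|_U$ arose \emph{from} the conformal holonomy of $c$, one checks that the conformal class induced by $\cal E|_U$ in Nurowski's/Bryant's sense coincides with $c|_U$; this is essentially a uniqueness statement for normal Cartan connections, so it follows once we know $\cal E|_U$ has the right generic type. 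I expect the main obstacle to be the dichotomy step in the middle paragraph — proving that a holonomy-constrained intersection $\fg_1\cap\fhol$ can only be integrable or of maximal growth, with the two exceptional dimension/signature pairs as the sole non-integrable cases — since this requires combining the algebraic bracket computation with the representation-theoretic classification of the relevant reductions rather than a single clean argument.
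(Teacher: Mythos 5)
Your outline correctly isolates the two soft parts (open--dense constancy of $r^{\cal E}$, and the final appeal to Nurowski/Bryant), but the step you yourself flag as ``the technical heart'' --- the dichotomy integrable/maximally non-integrable with only the two exceptional rank--signature pairs --- is exactly what is missing, and the mechanism you propose for it would not work. Since $[\fg_1,\fg_1]=0$, no purely algebraic closure property of $\fhol_x\cap\fg_1$ inside the graded algebra (``the $\fg_0$-component of brackets in $\fhol$ acting back on $\fg_1$'') can see the Frobenius bracket of the distribution $\cal E$: the information needed is first-order differential, not algebraic. The paper's argument instead uses that the bundle of holonomy algebras is $\nabla^{nc}$-parallel, so covariant derivatives (formula \eqref{derform}) of sections $s_-^{\flat}\wedge V^{\flat}$ with $V\in\Gamma(\cal E)$ again take values in $\fhol(M,c)$; antisymmetrising and commuting such derivatives produces the explicit membership relations \eqref{commhol}--\eqref{eq2}, which express $[V_1,V_2]$ modulo $\cal E$ in terms of distinguished directions $X$. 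A second missing ingredient is Proposition~\ref{furr}: unless the holonomy is generic, $\cal E$ is totally lightlike. This is what bounds $\rk\,\cal E\le\min(p,q)$ and makes the subsequent case analysis finite; with it, the relations \eqref{eq1}--\eqref{eq2} force integrability whenever $n>6$, or $\rk\,\cal E=2$ with $n>5$, or $n=4$, leaving only $(2,3)$ with rank $2$ and $(3,3)$ with rank $3$ as possibly non-integrable. Without these two steps your ``dimension count'' has no content, and the claim that the growth vector must be maximal in the exceptional cases is unsupported.

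Also, identifying the holonomy in the non-integrable cases is not a routine inspection of transitive actions: in dimension $5$ the paper invokes the odd-dimensional result of \cite{CapGoverGrahamHammerl15} (conformal holonomy $=$ holonomy of a Ricci-flat metric in signature $(4,3)$), rules out reducibility via Propositions~\ref{splitnd} and \ref{splitndlight}, and then uses the Berger list to get $\fg_2$; in dimension $6$ it exhibits the $15$ explicit elements \eqref{15elements} of $\fhol\cap\fp$, applies Proposition~\ref{openorbit} to conclude $\dim\fhol\ge 21$, and then a classification lemma (Lemma~\ref{2128lemma}) to pin down $\spin(3,4)$. Only after that does ``$\cal E$ defines $c$'' follow, via the distinguished twistor spinor with $\cal L_{\varphi}=\cal D=\cal E$ (Corollary~\ref{bryantcor2}), rather than by a uniqueness argument for normal Cartan connections. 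Finally, a small slip in your first paragraph: $r^{\cal E}$ is \emph{upper} semicontinuous (the sets $\{r^{\cal E}\le k\}$ are open, as in the proof of Theorem~\ref{eadapated}), so it is the local-minimum argument, not the local-maximum one, that gives the open dense set of local constancy; the paper additionally constructs smooth local frames of $\cal E$ there, which your sketch omits but which is needed to run the differential argument above.
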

  We should also point out that the the statements in Theorem \ref{maintheo} remain valid when $\rk(\mathcal{O})$ at $x$ is replaced by the dimension of $\cal E_x$. 
We believe that the conformal holonomy distribution will turn out to be a powerful tool that allows to obtain not only results about the obstruction tensor but also about other aspects of special conformal structures.

This article is organised as follows: Section \ref{sec2} reviews the relevant tractor calculus and the ambient metric construction in conformal geometry. Moreover, we discuss special conformal structures that will be important in the sequel from the point of view of holonomy reductions. Section \ref{prfmain} is then devoted to the proof of the first part of Theorem \ref{maintheo}. Key ingredient is a recently established  relation between conformal and ambient holonomy \cite{CapGoverGrahamHammerl15}  . In Section \ref{holdist} we introduce the conformal holonomy distribution $\mathcal{E}$ and study its basic properties. These results are then applied in Section \ref{aplob} to derive constraints on the obstruction tensor for many families of special conformal structures, in particular those in signature $(3,3)$ discovered by Bryant.

\section{Conformal structures,  tractors and ambient metrics} \label{sec2}
\subsection{Conventions}
Let $(M,g)$ be a semi-Riemannian manifold with Levi Civita connection $\nabla^g$. 
denote by $\Lambda^k:=\Lambda^kT^*M$ the $k$-forms and by $\fso(TM)$ the endomorphism of $M$ that are skew with respect to $g$.
By $\R=\R^g\in \Lambda^2\otimes \fso(TM)$ we will denote the curvature endomorphism of $\nabla^g$, i.e. one has for all vector fields $X,Y \in \mathfrak{X}(M)$
\begin{align*}
\R^g(X,Y) = \left[\nabla^g_X, \nabla^g_Y \right] - \nabla^g_{[X,Y]}.
\end{align*}
By contraction one obtains the Ricci tensor and scalar curvature,
\begin{align*}
\Ric^g(X,Y) & := \text{tr}\left(Z \mapsto \R^g(Z,X)Y \right), \\
\scal^g & := \text{tr}_g \Ric^g,
\end{align*}
and moreover we denote by $\Sch$ the Schouten tensor
\begin{align}
\Sch := \frac{1}{n-2} \left(\Ric^g - \frac{1}{2(n-1)} \scal^g g \right).
\end{align}
Using $g$ to raise and lower indices, we will also consider $\Sch$ and $\Ric^g$ as $g-$symmetric endomorphisms of $TM$ denoted with the same symbol. The metric dual 1-form of a vector $V \in TM$ is $V^{\flat} = g(V,\cdot)$ and from a 1-form $\alpha \in T^*M$ we obtain a tangent vector via $g(\alpha^{\sharp},\cdot)= \alpha$. 
From the Schouten tensor we obtain the 
Cotton tensor $C\in \Lambda^2\otimes TM$,
\[
C^g(X,Y) := \left(\nabla^g_X \Sch \right)(Y) - \left(\nabla^g_Y \Sch \right)(X),\]
and the Weyl tensor $W \in \Lambda^2\otimes \fso(TM)$, 
\[
W^g(X,Y):= \R^g(X,Y) + X^{\flat} \otimes \Sch(Y) + \Sch(X) \otimes Y  - \Sch(Y) \otimes X - Y^{\flat} \otimes \Sch(X). \]
We will also write $C^g(Z;X,Y):=g(Z,C^g(X,Y))$ for the metric dual of $C^g$, drop the $g$ and  use the index convention $C_{kij}=C(\del_k;\del_i,\del_j)$.
\subsection{Conformal tractor calculus}
Let $(M,c)$ be a conformal manifold of signature $(p,q)$, dimension $n=p+q \geq 3$ and let $\mathcal{T} \rightarrow M$ denote the standard tractor bundle for $(M,c)$ with normal conformal Cartan connection $\nabla^{nc}$ and tractor metric $h$ as introduced in \cite{beg}. The tractor bundle $\mathcal{T}$ is equipped with a canonical filtration $\mathcal{I} \subset \mathcal{I}^{\perp} \subset \mathcal{T}$, where $\mathcal{I}$ is a distinguished lightlike line. For each metric $g \in c$ one finds distinguished lightlike tractors $s_{\pm}$ which lead to an identification 
\begin{equation}\label{idef}
\begin{array}{rcl}
\mathcal{T} &\longrightarrow &\underline{\mathbb{R}} \oplus TM \oplus \underline{\mathbb{R}}, \\
T &\longmapsto& \alpha s_- + V + \beta s_+ \ \longmapsto\  (\alpha, V, \beta)^\top 
\end{array}
\end{equation}
 under which the tractor metric becomes $h((\alpha_1,V_1,\beta_1),(\alpha_2,V_2,\beta_2)) = \alpha_1\beta_2 + \alpha_2 \beta_1 + g(V_1,V_2)$, and in this identification, $s_-$ generates $\mathcal{I}$. Under a conformal change $\widetilde{g} = e^{2 \sigma}g$, the transformation of the metric identification \eqref{idef} of a standard tractor is given by
\begin{align}
\begin{pmatrix} \alpha \\ Y \\ \beta \end{pmatrix} \longmapsto \begin{pmatrix} \widetilde{\alpha} \\ \widetilde{Y} \\ \widetilde{\beta} \end{pmatrix}= \begin{pmatrix} e^{- \sigma} (\alpha - Y(\sigma) - \frac{1}{2}\beta \cdot ||\text{grad}^g \sigma ||^2_g )\\ e^{- \sigma} (Y + \beta \cdot \text{grad}^g \sigma) \\ e^{\sigma} \beta \end{pmatrix}. \label{tra}
\end{align}
From this one observes that the image of a linear subspace $H \subset \mathcal{I}^{\perp} \subset \mathcal{T}$ under the map
\[\begin{array}{rcccl}
\mathcal{I}^{\perp} & \longrightarrow & \mathcal{I}^{\perp} / \mathcal{I} & \longrightarrow & TM, \\
\alpha s_- + V &  \longmapsto & \left[ \alpha s_- + V \right] & \longmapsto&  V
\end{array}\]
is conformally invariant, i.e., independent of the choice of $g \in c$.
 For $\nabla^{nc}$ expressed in terms of the splitting \eqref{idef} we find
 \begin{align}
 \nabla_X^{nc} \begin{pmatrix} \alpha \\ Y \\ \beta \end{pmatrix} = \begin{pmatrix} X(\alpha) - \mathsf{P}^g(X,Y) \\ \nabla^g_X Y + \alpha X + \beta \mathsf{P}^g(X) \\ X(\beta) - g(X,Y) \end{pmatrix}.\label{coder}
 \end{align}
The curvature of $\nabla^{nc}$ is given by $\R^{nc}(X,Y) = C^g(X,Y) \wedge s_-^{\flat} + W^g(X,Y)$, where we identified the bundles $\mathfrak{so}(\mathcal{T},h)$ and $\Lambda^2 \mathcal{T^*}$ by means of $h$ in the usual way by the musical isomorphisms ${}^\flat$ and ${}^\sharp$. 
{\blu }

\bigskip

Turning to adjoint tractors, it follows from identification \eqref{idef} that for fixed $g \in c$, each fiber of the bundle $\mathfrak{so}(\mathcal{T},h)$ of skew-symmetric endomorphisms of the tractor bundle can be identified with skew-symmetric matrices of the form \begin{align*}
\Phi(V,(a,A),Z) := \begin{pmatrix} -a & \mu & 0 \\ Z & A & -\mu^{\sharp}  \\ 0 & -Z^{\flat} & a \end{pmatrix}
\end{align*}
where $Z$ is a vector, $\mu$ a 1-form and $A$ is skew-symmetric for $g$.
For example, the curvature of $\nabla^{nc}$ is identified with 
\[
\R^{nc}(X,Y) = 
\begin{pmatrix} 0 & C^g(X,Y)^\flat & 0 \\ 0 & W^g(X,Y) & -C^g(X,Y)  \\ 0 & 0& 0 \end{pmatrix}
.\]
In particular, each choice of $g$ yields an obvious pointwise $|1|$ grading of $\mathfrak{so}(\mathcal{T},h)$ according to the splitting
\begin{align}
\mathfrak{g}_{-1} = \{\Phi(0,0,Z) \}, \quad \mathfrak{g}_0 = \{\Phi(0,(a,A),0) \}, \quad\mathfrak{g}_1 = \{\Phi(\mu,0,0) \},
\end{align}
with brackets given by 
\begin{equation}
\begin{aligned} \label{brackets}
[(a,A),Z] & = (a + A)Z, \\
[(a,A),\mu] &=  - \mu \circ (A+a\Id), \\
[Z,\mu] &= (\mu(Z), \mu \wedge Z^{\flat}).
\end{aligned}
\end{equation}
In particular, $[\mathfrak{g}_i,\mathfrak{g}_j] \subset \mathfrak{g}_{i+j}$. It follows that the induced derivative $\nabla^{nc}$ on a section $\Phi=\Phi(\mu,(a,A),Z)$ of $\mathfrak{so}(\mathcal{T},h)$ is given by 
\begin{align} 
\nabla^{nc}_X \Phi = \begin{pmatrix} -X(a) - \mathsf{P}^g(X,Z) - \mu(X) & \nabla^g_X \mu -\mathsf{P}^g(X,(A+a\Id) \cdot)  & 0 \\  \nabla^g_X Z-(A+a)X & \nabla^g_X A + \mu \wedge X^{\flat} - Z^{\flat} \wedge \mathsf{P}^g(X, \cdot) & -\nabla^g_X \mu^{\sharp} +(a-A) \mathsf{P}^g(X) \\ 0 &- \nabla^g_X Z^{\flat}+ (AX)^{\flat} + a X^{\flat}  & X(a) + \mathsf{P}^g(X,Z) + \mu(X)\end{pmatrix} \label{derform}
\end{align}
\subsection{Holonomy reductions of  conformal structures}\label{bspsec}
In this section we list definitions and properties of the conformal structures which have appeared in the introduction and to which the main Theorem \ref{maintheo} can be applied. They all turn out be characterised in terms of a conformal holonomy reduction. Here, for $(M^{p,q},c)$ a conformal manifold, its conformal holonomy at $x \in M$ is defined as  
\[ \Hol_x(M,c) := \Hol_x(\mathcal{T},\nabla^{nc}) \] 
and gives a class of conjugated subgroups in $\mathbf{O}(p+1,q+1)$. The interplay between {\em conformal holonomy reductions}, i.e., when $\Hol^0_x(M,c)$ is a proper subgroup of $\SO(p+1,q+1)$, and distinguished metrics in the conformal class has been the focus of active research. We will review the most important ones that are relevant in the paper.

\subsubsection{Geometries with reducible holonomy representation}
One initial result is that holonomy invariant lines $L \subset \mathbb{R}^{p+1,q+1}$ are in one-to-one correspondence to \textit{almost Einstein scales} in $c$ \cite{Gauduchon90, bailey-eastwood-gover94,Gover05, gover-nurowski04,leitner05,leistner05a} by which we mean that on an open, dense subset of $M$ there exists around each point locally an Einstein metric $g \in c$. If $L$ is lightlike, $g$ is Ricci flat and otherwise one has  $\text{sgn}(\text{scal}^g) = - \text{sgn }\langle L, L \rangle_{p+1,q+1}$. 

A holonomy-invariant nondegenerate subspace $H \subset \mathbb{R}^{p+1,q+1}$ of dimension $k \geq 2$ corresponds locally and off a singular set to the existence of a \textit{special Einstein product} in the conformal class \cite{armstrong07conf,leitner04,ArmstrongLeitner12}. Here, we say that a pseudo-Riemannian manifold $(M,g)$ is a special Einstein product if $(M,g)$ is isometric to a product $(M_1,g_1) \times (M_2,g_2)$, where $(M_i,g_i)$ are Einstein manifolds of dimensions $k-1$ and $n-k-1$ for $k \geq 2$ and in case $k \neq 2,n$ we additionally require that
\begin{align*}
\text{scal}^{g_1} = - \frac{(k-1)(n-2)}{(n-k+1)(n-k)} \text{scal}^{g_2} \neq 0.
\end{align*}

Finally, if actually $H \subset \mathbb{R}^{p+1,q+1}$ is totally degenerate, of dimension $k+1\geq 2$ and holonomy invariant, there exists --- again locally and off a singular set --- a metric $g \in c$ admitting a $\nabla^g$-invariant and totally degenerate distribution $\mathcal{L} \subset TM$ of rank $k$ which additionally satisfies  $\mathrm{Im}(\text{Ric}^g) \subset \mathcal{L}$, as has been shown in \cite{leistner05a,leistner-nurowski12,Lischewski15}.

\subsubsection{Geometries defined via normal conformal Killing forms}
Suppose next that $\Hol(M,c)$ lies in the isotropy subgroup of a $(k+1)$-form, i.e. there exists a $\nabla^{nc}$-parallel tractor $k+1$-form $\widehat{\alpha} \in \Gamma(M,\Lambda^{k+1}\mathcal{T}^*)$. Such holonomy reductions have been studied in \cite{leitner05}.
For fixed $g \in c$, consider the splitting of $\mathcal{T}$ wrt. $g$ and write $\widehat{\alpha}$ as
\begin{align}
\widehat{\alpha} = s_+^{\flat} \wedge \alpha + \alpha_0 + s_-^{\flat} \wedge s_+^{\flat} \wedge \alpha_{\pm} + s_-^{\flat} \wedge \alpha_- \label{splitncform}
\end{align}
for uniquely determined differential forms $\alpha, \alpha_{0},\alpha_{\pm}, \alpha_-$ on $M$. 
The $k-$form $\alpha \in \Omega^k(M)$ turns out to be \textit{normal conformal (nc)}, that is $\alpha$ is a conformal Killing form subject to additional conformally covariant differential normalisation conditions that can be found in \cite{leitner05}. Moreover, $\alpha_{0},\alpha_{\pm}, \alpha_-$ can be expressed in terms of $\alpha$ and $\nabla^g$. Conversely, every normal conformal Killing form determines a parallel tractor form. The situation simplifies considerably if $k=1$, i.e. there is a parallel adjoint tractor. In this case it is convenient to consider the metric dual $V = \alpha^{\sharp} \in \mathfrak{X}(M)$ of the associated normal conformal Killing form $\alpha$, which is a \textit{normal conformal vector field}. By this, we mean that $V$ is a conformal vector field which additionally satisfies $C^g(V,\cdot) = W^g(V,\cdot) = 0$. \\
Examples for manifolds admitting normal conformal vector fields are so called \textit{Fefferman spaces} \cite{Fefferman76}. They yield conformal structures $(M,c)$ of signature $(2r+1,2s+1)$ defined on the total spaces of $S^1$-bundles over strictly pseudoconvex CR manifolds. From the holonomy point of view they are (at least locally) equivalently characterised by the existence of a parallel adjoint tractor \cite{ leitnerhabil,CapGover10}, which is an almost complex structure for the tractor metric, i.e. $\Hol(M,c) \subset \mathbf{SU}(r+1,s+1)$. Here, we used a result from \cite{leitner08,CapGover10} which asserts that unitary conformal holonomy is automatically special unitary. 

Other geometries that are characterised by the existence of distinguished normal conformal vector fields include pseudo-Riemannian manifolds $(M,g)$ of signature $(4r+3, 4m+ 3)$ with conformal holonomy group in the symplectic group
$\Sp(r+1,m + 1) \subset \SO(4r+4, 4m+ 4)$, see \cite{alt08}. The models of such manifolds are $S^3$-bundles over a quaternionic contact manifold equipped with a canonical conformal structure, introduced in \cite{Biquard00}.

\subsubsection{Conformal holonomy and twistor spinors}
If $(M,c)$ is actually spin for one, and hence all, $g \in c$, the presence of conformal Killing spinors always leads to reductions of $\Hol(M,c)$. To formulate these, let $S^g \rightarrow M$ denote the real or complex spinor bundle over $M$ which possesses a spinor covariant derivative $\nabla^{S^g}$ and vectors act on spinors by Clifford multiplication $cl = \cdot$, see \cite{ba81}. Given these data, the spin Dirac operator is given as $D^g = cl \circ \nabla^{S^g}$.  Now assume that $(M,g)$ admits a twistor spinor,  i.e. a section $\varphi \in \Gamma(M,S^g)$ solving 
\begin{align}
\nabla^{S^g}_X \varphi + \frac{1}{n} X \cdot D^g \varphi = 0. \label{teq}
\end{align}
Equation \eqref{teq} is conformally invariant, see \cite{bfkg}, and to $\varphi$ we can associate the union of subspaces 
\[\cal{\cal  L}_{\varphi}:= \{ X \in TM \mid X \cdot \varphi = 0\} \subset TM, \] 
which does not depend on the choice of $g \in c$. Equation \eqref{teq} can be prolonged, see \cite{bfkg}, and using this prolonged system it becomes immediately clear that a twistor spinor $\varphi$ is equivalently described as parallel section $\psi$ of the spin tractor bundle associated to $(M,c)$. Its construction can be found in \cite{leitnerhabil}, for instance. As $\psi$ is parallel, it is at each point annihilated by $\mathfrak{hol}_x(M,c)$ under Clifford multiplication, i.e.
\begin{align}
\mathfrak{hol}_x(M,c) \cdot \psi_x = 0, \quad\forall x \in M. \label{holts}
\end{align}

\subsubsection{Exceptional cases}
Finally we describe conformal structures in dimension 5 and 6 with holonomy algebra contained in $\fg_2 \subset \mathfrak{so}(3,4)$, the non-compact simple Lie algebra of dimension $14$, or in $\spin(3,4) \subset \fso(4,4)$, respectively. They turn out to be closely related to generic distributions: 

Recall that a distribution $\cal D$ of rank $2$  on a $5$-manifold $M$   is {\em generic} if $\left[\cal D,[\cal D,\cal D]\right]+ \left[\cal D,\cal D\right]+\cal D=TM$. It is known by work of Nurowski \cite{nurowski04}  that  $\cal D$ canonically defines a conformal structure $c_{\cal D}$ of signature $(2,3)$ on $M^5$ whose conformal holonomy is reduced to $\mathfrak{g}_2\subset \fso(3,4)$, see also \cite{cap-sagerschnig09}. Analogously, 
a distribution $\cal D$ of rank $3$  on a $6$-manifold $M$   is {\em generic}
if  $\left[\cal D,\cal D\right]+\cal D=TM$, and Bryant showed in 
 \cite{bryant06}
  that  $\cal D$ canonically defines a conformal structure $c_{\cal D}$ of signature $(3,3)$ on $M$ whose conformal holonomy is reduced to  $\mathfrak{spin}(4,3)\subset \fso(4,4)$.
In both cases, the holonomy characterisation implies  that $(M,c_{\cal D})$ admits a parallel tractor $3$- or $4$-form, respectively. Moreover, \cite{HammerlSagerschnig11} shows that there is in both cases a distinguished twistor spinor $\varphi$ which encodes $\mathcal{D}$ in the sense that 
\begin{align}
\cal{\cal  L}_{\varphi} = \mathcal{D}
\end{align}   
at each point.

\subsection{Conformal ambient metrics} \label{ambsec}
Let $(M,c)$ be a conformal manifold of dimension $\geq 3$. For our purposes we do not need the general theory of ambient metrics as presented in \cite{fefferman-graham07},  which can be consulted for more details, but it suffices to deal with ambient metrics which are in normal form w.r.t.~some $g \in c$. A (straight) pre-ambient metric in normal form w.r.t.~$g \in c$ is a pseudo-Riemannian metric $\widetilde{g}$ on an open neighborhood $\widetilde{M}$ of $\{1\} \times M \times \{0 \} $ in $\mathbb{R}^+ \times M \times \mathbb{R}$ such that  for $(t,x,\rho) \in \widetilde{M}$ we have
\begin{align}
\widetilde{g} = 2t dtd\rho + 2\rho dt^2 + t^2 g_{\rho}(x) \label{nform}
\end{align}
with $g_{0}= g$. We call $(\widetilde{M},\widetilde{g})$ an {\em ambient metric for $(M,[g])$ in normal form} with respect to $g$ if 
\begin{itemize}
\item 
$\widetilde{\Ric} = O(\rho^{\infty})$ if $n$ is odd, and 
\item 
$\widetilde{\Ric}= O(\rho^{\frac{n}{2}-1})$ and  $\mathrm{tr}_g \left(\rho^{1-\frac{n}{2}} \widetilde{\Ric}_{|TM \otimes TM}\right) = 0$ along $\rho=0$, if $n$ is even.
\end{itemize}
The existence and uniqueness assertion for ambient metrics \cite{fefferman/graham85,fefferman-graham07} states that for each choice of $g$ there is an ambient metric in normal form w.r.t.~$g$. In all dimensions $n\geq 3$,  $g_{\rho}$ has an expansion of the form $g_{\rho} = \sum_{k \geq 0} g^{(k)} \rho^k$ starting with
\[ g_{\rho} = g + 2\rho\, \mathsf{P}^g + O(\rho^2), \]
and in odd dimensions the Ricci flatness condition determines $g^{(k)}$ for all $k$, whereas in even dimensions only the  $g^{\left(k < \frac{n}{2} \right)}$ and the trace of $g^{\left(\frac{n}{2}\right)}$ are determined.

We shall sometimes work with ambient indices $I \in \{0,i,\infty \}$, where $i$ are indices for coordinates on $M$, $0$ refers to $\partial_t$ and $\infty$ to $\partial_{\rho}$, i.e., $T\widetilde M \ni V=
V^0\del_t+V^i\del_i+V^\infty\del_\rho$.
For the Levi-Civita connection 
of any metric of the form  \eqref{nform} one computes \cite[Lemma 3.2]{fefferman-graham07},
 \begin{equation}
\label{ambientconnection}
\begin{array}{rcl}
\widetilde{\nabla}_{\del_i}\del_j&=& -\frac{1}{2}\, t\,\dot g_{ij} \del_t +\Gamma^k_{ij}\del_k+(\rho\dot g_{ij}-g_{ij})\del_\rho,
\qquad \widetilde{\nabla}_{\del_t}\del_t\ =\  
\widetilde{\nabla}_{\del_\rho}\del_\rho\ =\ 0,
\\[2mm]
\widetilde{\nabla}_{\del_i}\del_t&=& \frac{1}{t}\del_i,
\qquad 
\widetilde{\nabla}_{\del_i}\del_\rho\ =\  \frac{1}{2}g^{kl}\dot g_{il}\del_k,
\qquad \widetilde{\nabla}_{\del_\rho}\del_t\ =\  \frac{1}{t}\del_\rho,
\end{array}
\end{equation}
where, abusing  notation, $g_{ij}$ denotes the components of $g_\rho$ and $\Gamma^k_{ij}$ the Christoffel symbols of $g_\rho$.
In particular, $T := t \partial_t$ is an Euler vector field for $(\widetilde{M},\widetilde{g})$, i.e.
\begin{align}
\widetilde{\nabla} T = Id.
\end{align}

For $n$ even a conformally invariant $(0,2)$-tensor on $M$, the {\em ambient obstruction tensor} $\mathcal{O}$, obstructs the existence of smooth solutions to $\widetilde{\Ric} = O(\rho^{\frac{n}{2}})$. For $\widetilde{g}$ in normal form w.r.t.~$g$ it is given by
\begin{align}
\mathcal{O} = c_n \left(\rho^{1-\frac{n}{2}} (\widetilde{\Ric}_{|TM \otimes TM})\right)_{\rho = 0}, \label{defobstr}
\end{align}
where  $c_n$ is some known nonzero constant, cf. \cite{fefferman-graham07}. From this  can deduce that $\mathcal{O}$ is trace- and divergence free.

Tractor data can be recovered from ambient data as shown in \cite{cap-gover03}. For ambient metrics in normal form w.r.t.~$g \in c$, this reduces to the following observation, see \cite{graham-willse11} for more details:  Identify $M$ with the level set $\{ \rho = 0, t=1 \}$ in $\widetilde{M}$. Obviously, $T\widetilde{M}_{|M}$ splits into $\mathbb{R}\partial_t \oplus TM \oplus \mathbb{R}\partial_{\rho}$, which is isomorphic to the $g-$metric identification of the tractor bundle $\mathcal{T}$ under the map
\begin{align}
\partial_t \longmapsto s_-, \quad TM \stackrel{Id}{\longmapsto} TM, \quad \partial_{\rho} \longmapsto s_+. \label{identific}
\end{align}
The map \eqref{identific} is an isometry of bundles over $M$ w.r.t.~$\widetilde{g}$ and $h$ and the pullback of $\widetilde{\nabla}$, the Levi Civita connection of $\widetilde{g}$, to  $T\widetilde{M}_{|M}$  coincides with \eqref{coder}. This also follows directly from an inspection of \eqref{tra} and \eqref{ambientconnection}. With  these identifications, for fixed $g \in c$  we view the tractor data as restrictions of ambient data for an ambient metric which is in normal form with respect to  $g$.

\section{The ambient obstruction tensor and conformal holonomy} \label{prfmain}
We outline how the obstruction tensor can be identified with a distinguished subspace of the infinitesimal conformal holonomy algebra at each point. This requires some preparation:

Let $V$ be a vector space. The standard action $\#$ of  $End(V)$ on $V$ extends to an action on the space $T^{r,s}V$ of $(r,s)$ tensors over $V$. This action will be denoted by the same symbol. Thus, $End(V) \otimes End(V)$ acts on $T^{r,s}V$ with a double $\#-$action, explicitly given by
\begin{align}
(A \otimes B) \# \# (\eta) = A \# (B \# \eta). \label{hash}
\end{align}
Given a pseudo-Riemannian manifold $(N,h)$, we can view its curvature tensor $\mathrm{R}^h$ as section of the bundle $\mathfrak{so}(N,h) \otimes \mathfrak{so}(N,h)$, and applying \eqref{hash} pointwise yields an action $\mathrm{R}^h \# \#$ of the curvature on arbitrary tensor bundles of $N$.

Returning to the original setting, let $(M,g)$ be a pseudo-Riemannian manifold of even dimension and let $(\widetilde{M},\widetilde{g})$ be an associated ambient metric which is in normal form w.r.t.~$g$. Let $\widetilde{\Delta} = \widetilde{\nabla}_A \widetilde{\nabla}^A$ denote the usual connection Laplacian on the ambient manifold. In \cite{gope}  a modified Laplace type operator 
\begin{align}
\slashed{\Delta} = \widetilde{\Delta} + \frac{1}{2} \aR \# \# 
\end{align}
is introduced and will be used in the subsequent calculations.


The previous observations enable us to prove the main result of this section:
\begin{Theorem} \label{obstrhol}
Let $(M,c=[g])$ be of even dimension $>2$. For every $g \in c$ one has \[s_-^{\flat} \wedge \mathcal{O}^{\flat}(X) \in \mathfrak{hol}_x(M,[g])\] for all $x \in M$ and $X \in T_xM$.
\end{Theorem}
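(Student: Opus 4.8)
The plan is to realise the obstruction tensor as the leading term in the $\rho$-expansion of the ambient Ricci tensor and then to transport the ambient holonomy statement to the tractor setting using the identification \eqref{identific} together with the relation between ambient and conformal holonomy from \cite{CapGoverGrahamHammerl15}. Concretely, I would first recall that for $\widetilde g$ in normal form w.r.t.~$g$, the ambient Ricci tensor vanishes to order $\rho^{n/2-1}$ and that, by \eqref{defobstr}, its coefficient at order $\rho^{n/2-1}$ on $TM\otimes TM$ is a nonzero multiple of $\mathcal O$. The key analytic input, taken from \cite{gope}, is that the modified Laplacian $\slashed\Delta=\widetilde\Delta+\tfrac12\aR\#\#$ applied to a suitable ambient tensor built from the Euler field $T=t\partial_t$ and the metric produces $\widetilde{\Ric}$ (up to lower order), so that iterated covariant differentiation of the ambient curvature, evaluated along $Q$, reproduces $\mathcal O$ modulo terms that die along $\rho=0$.

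Next I would use the Ambrose--Singer theorem on $(\widetilde M,\widetilde g)$: the ambient holonomy algebra $\widetilde{\fhol}$ at a point of $M\hookrightarrow\{t=1,\rho=0\}$ is spanned by the values of $\aR$ and all its iterated $\widetilde\nabla$-derivatives, parallel-transported to the basepoint. The formula \eqref{ambientconnection} for $\widetilde\nabla$ shows that differentiating in the $\partial_i$ directions and then restricting to $\rho=0$ mixes the $\partial_t,\partial_i,\partial_\rho$ blocks exactly according to the tractor connection \eqref{coder}; in particular, after enough differentiations one extracts from $\aR$ and its derivatives an endomorphism of $T\widetilde M|_M$ which, under \eqref{identific}, is precisely $s_-^\flat\wedge\mathcal O^\flat(X)=\Phi(\mathcal O^\flat(X),0,0)\in\fg_1$. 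The point is that the $\fg_1$-component of the ambient holonomy picks out the $TM\otimes TM$-part of the top-order ambient Ricci coefficient, which is $\mathcal O$ up to the constant $c_n$.

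Finally I would invoke the result of \cite{CapGoverGrahamHammerl15} (cited in the paragraph preceding the statement as the ``recently established relation between conformal and ambient holonomy'') to conclude that $\widetilde{\fhol}_x\subseteq\fhol_x(M,c)$ under the identification \eqref{identific}, so that the element $s_-^\flat\wedge\mathcal O^\flat(X)$ constructed above lies in $\fhol_x(M,c)$; since $\mathcal O$ is $g$-symmetric and trace-free, $\mathcal O^\flat(X)$ is a well-defined $1$-form and the element sits in $\fg_1$ as claimed. The main obstacle I anticipate is the bookkeeping in the second step: one must show that the {\em specific} combination of iterated ambient curvature derivatives that survives restriction to $\rho=0$ and projects to $\fg_1$ is exactly $\mathcal O$ and not some other trace-part or derivative term, which requires carefully tracking how $\slashed\Delta$ acting on the ambient metric/Euler data feeds into the Ambrose--Singer span; this is where the computation of \cite{gope} expressing ambient curvature quantities through tractor curvature and its derivatives does the heavy lifting, and I would lean on it rather than redo the tensor algebra by hand.
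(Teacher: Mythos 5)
Your overall strategy (ambient metric in normal form, the obstruction as the $\rho^{\frac n2-1}$-coefficient of $\widetilde{\Ric}$, the operator $\slashed{\Delta}$ from \cite{gope}, and the holonomy correspondence of \cite{CapGoverGrahamHammerl15}) is the same as the paper's, but the two steps you defer to ``bookkeeping'' are exactly where the proof lives, and as stated there is a genuine gap. First, the relation from \cite{CapGoverGrahamHammerl15} that is available in even dimensions is \emph{not} an inclusion of the full ambient holonomy into $\fhol_x(M,c)$: the ambient metric is determined only up to the ambiguity at order $\rho^{n/2}$, so its full holonomy is not a conformal invariant, and what holds is the equality $\fhol^\prime_x(M,c)=\fhol^{\frac n2-2}_x(\widetilde M,\widetilde g)$ of the infinitesimal conformal holonomy with the \emph{filtered} ambient infinitesimal holonomy, in which at most $\tfrac n2-2$ of the vector fields entering the iterated derivatives of $\aR$ have a $\partial_\rho$-component. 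Hence it does not suffice to locate $s_-^\flat\wedge\mathcal O^\flat(X)$ somewhere in the Ambrose--Singer span of the ambient curvature; you must produce it using at most $\tfrac n2-2$ $\rho$-differentiations, which is delicate precisely because $\mathcal O$ sits at order $\rho^{\frac n2-1}$ of $\widetilde{\Ric}$.

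Second, when one actually runs your step (the relevant identity from \cite{gope} is the Bianchi-type formula \eqref{divric0} expressing skewed second derivatives of $\widetilde{\Ric}$ through $\slashed{\Delta}\aR$, not $\slashed{\Delta}$ applied to Euler-field data), differentiating $\tfrac n2-3$ times in $\partial_\rho$ does not deliver $\partial_t^\flat\wedge\mathcal O^\flat(X)$ on the nose: modulo $\fhol^{\frac n2-2}_x(\widetilde M,\widetilde g)$ one obtains a combination $k(n)\,\partial_t^\flat\wedge\mathcal O^\flat(X)-l(n)\,\bigl(\widetilde\nabla_{\partial_\rho}^{\frac n2-2}\aR\bigr)(\partial_\rho,X)$, and the second term both exceeds the allowed number of $\rho$-derivatives and, by \cite[Proposition 6.6]{fefferman-graham07}, contains the undetermined ambiguity $\mathrm{tf}(\partial_\rho^{n/2}g_{ij})$ of the ambient metric. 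The paper kills it by a naturality argument: the ambiguity can be prescribed arbitrarily while $\mathcal O$ and the remainder are determined by $g_\rho$ to order $<\tfrac n2$, forcing $l(n)=0$. Nothing in \cite{gope} does this for you, so leaning on that reference does not close the gap. Finally, your scheme needs $n>4$ (it differentiates $\tfrac n2-3\ge 0$ times); the case $n=4$, which the statement covers, requires a separate argument, e.g.\ via $\tr_g\nabla^{nc}_{\cdot}\R^{nc}(X,\cdot)=(n-4)C(X;\cdot,\cdot)+B(X)\wedge s_-^\flat\in\fhol(M,c)$ together with the fact that $\mathcal O$ is then a multiple of the Bach tensor $B$.
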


\bprf
The proof uses the notion of {\em infinitesimal holonomy}:
within in the Lie algebra $\mathfrak{hol}_x(M,c)$ of $\Hol_x(M,c)$ at a point $x \in M$, we consider the {\em infinitesimal holonomy algebra at $x$}, i.e. the Lie algebra of iterated  derivatives of the tractor curvature evaluated at $x$,
\[
\hol^\prime_x(M,c)
:=\text{span}_\rr
\{\nabla^{nc}_{X_1}\left( \ldots \left( \nabla^{nc}_{X_{l-1}}\left( \R^{nc}(X_{l-1},X_l)\right)\right)\right)(x)\mid l\ge 2, X_1,\ldots, X_l\in \mathfrak{X}(M)
\}.\]
For more details on the infinitesimal holonomy refer to \cite[Chap. II.10]{ko-no1} or \cite{Nijenhuis53,Nijenhuis54}. We will in fact show that $
s_-^{\flat} \wedge \mathcal{O}^{\flat}(X) \in \mathfrak{hol}^\prime_x(M,[g])$ for all $x \in M$ and $X \in T_xM$.

Assume first that $n>4$. Let $(\widetilde{M},\widetilde{g})$ be an associated ambient manifold for $(M,[g])$ which is in normal form w.r.t.~some fixed $g$ in the conformal class. 
For $x \in M$ let
\[ \mathfrak{hol}_x(\widetilde{M},\widetilde{g}) := \text{span}_{\mathbb{R}}\left\{\widetilde{\nabla}_{X_1}(...\widetilde{\nabla}_{X_{l-2}} (\aR(X_{l-1},X_l)))(x) \mid l \geq 2, X_i \in \mathfrak{X}(\widetilde{M}) \right\} \]
denote the infinitesimal holonomy algebra of $(\widetilde{M},\widetilde{g})$ at $x$ and for $k\geq 0$ let $\mathfrak{hol}^k_x(\widetilde{M},\widetilde{g})$ denote the subspace of elements for which at most $k$ of the $X_i$ have a not identically zero $\partial_{\rho}$-component. Then    \cite[Theorem~3.1]{CapGoverGrahamHammerl15}
 asserts that under the identifications from Section \ref{ambsec} we have 
\begin{align}
{\mathfrak{hol}^\prime_x(M,c)} = \mathfrak{hol}_x^{\frac{n}{2}-2}(\widetilde{M},\widetilde{g}). \label{holeq}
\end{align}
Indeed, for $(\widetilde{M},\widetilde{g})$ which is in normal form w.r.t.~$g$, equality \eqref{holeq} can be verified as follows:

From the identifications from Section \ref{ambsec} one obtains immediately the inclusion $\subset$ in \eqref{holeq}. In order to prove the converse, we obtain with \cite[Lemma 3.1]{graham-willse11} and \cite[Propositon 6.1]{fefferman-graham07} that
\begin{equation}
\begin{aligned} \label{indbegin}
\aR(\partial_i,\partial_j)(x) &= \R^{nc}(\partial_i,\partial_j)(x), \\
\aR(\partial_t,\partial_{I})(x) &= 0, \\
\aR(\partial_{\rho},\partial_{i})(x) &= 3 g^{kl} \left(\nabla^{nc}_{\partial_k} \R^{nc}\right)(\partial_l, \partial_i)(x).
\end{aligned}
\end{equation}
 The right sides of these expressions clearly lie in {$\mathfrak{hol}^\prime_x(M,c)$}. To proceed, using linearity and commuting covariant derivatives, it suffices to prove that 
\begin{align}
\left( \widetilde{\nabla}_{X_i}^k \widetilde{\nabla}_{\partial_{\rho}}^l \widetilde{\nabla}_{\partial_t}^j \aR \right) (Y,Z) (x) \in {
\mathfrak{hol}^\prime_x(M,c)}, \label{genericel}
\end{align}
where $k,j,l \geq 0$, $X_i \in T_xM$, $Y,Z \in T_x \widetilde{M}$ and $l \leq \frac{n}{2}-1$ or $\frac{n}{2}-2$ (depending on whether one of $Y,Z$ has a $ \partial_{\rho}$-component): Given an element of the form \eqref{genericel} one first applies Proposition 6.1 from \cite{fefferman-graham07}, which rewrites $\partial_t$ derivatives of $\widetilde{\R}$, and obtains a linear combination of elements of the form \eqref{genericel} with $j = 0$ and $Y,Z$ have no $\partial_t$-component. Thus, it suffices to prove \eqref{genericel} for $j=0$. This is then achieved by induction over $l$. Indeed, for $l=0$ the statement follows from \eqref{indbegin}. Furthermore, we may assume that $Y= \partial_{\rho}$ (otherwise all differentiations are tangent to $M$ or we use the second Bianchi identity) and $Z \in T_xM$. However, applying Lemma 3.1 from \cite{graham-willse11}, which rewrites $\aR  (\partial_{\rho},Z)$ in terms of derivatives of $\widetilde{\R}$ tangent to $M$, and then applying the second Bianchi identity and the induction hypothesis, shows the claim for an element of the form \eqref{genericel}. This proves \eqref{holeq}.

Using again the identifications from Section \ref{ambsec}, we will now show that for $x \in M$ and $X \in T_xM$ we have 
\begin{align}
 \partial_{t}^{\flat}(x) \wedge \mathcal{O}(X)^{\flat}(x) \in \mathfrak{hol}_x^{\frac{n}{2}-2}(\widetilde{M},\widetilde{g}). \label{equal}
 \end{align} 
{With this,  equality (\ref{holeq}) and the inclusion $\mathfrak{hol}^\prime_x(M,c)\subset \mathfrak{hol}(M,c)$ will imply the statement of Theorem~\ref{obstrhol}.}
In order to verify property (\ref{equal}), note that, as observed in \cite{gope},
on any pseudo-Riemannian manifold one has (in abstract indices)
\begin{align}
4 \widetilde{\nabla}_{A_1} \widetilde{\nabla}_{B_1} \widetilde{\Ric}_{A_2 B_2} =  \slashed{\Delta} \aR_{A_1 A_2 B_1 B_2} - \widetilde{\Ric}_{CA_1} {\aR^C}_{~A_2 B_1 B_2} + \widetilde{\Ric}_{CB_1} {\aR^C}_{~B_2 A_1 A_2} , \label{divric0}
\end{align}
here $A_1,A_2$ and $B_1,B_2$ are pairwise skew-symmetrised. Indeed, \eqref{divric0} is a straightforward consequence of the second Bianchi identity. As in our situation $\widetilde{\Ric} = O(\rho^{\frac{n}{2}-1})$ it follows that 
\begin{align}
4 \widetilde{\nabla}_{A_1} \widetilde{\nabla}_{B_1} \widetilde{\Ric}_{A_2 B_2} =  \slashed{\Delta} \aR_{A_1 A_2 B_1 B_2} + O(\rho^{\frac{n}{2}-1}). \label{divric}
\end{align}
Via induction and using that $\widetilde{\nabla}_{\partial_{\rho}} \partial{\rho} = 0$ one derives from \eqref{divric}
\begin{align}
\left(\widetilde{\nabla}_{\partial_{\rho}}^{\frac{n}2-3} \slashed{\Delta} \aR\right)(Y_1,Y_2, Z_1,Z_2)(x) = 4\left(\widetilde{\nabla}_{\partial_{\rho}}^{\frac{n}2-3} \widetilde{\nabla}_{Y_1} \widetilde{\nabla}_{Z_1}\widetilde{\Ric}\right)(Y_2,Z_2)(x), \label{diffric}
\end{align}
where now $x \in M$, $Y_i,Z_i$ are ambient vector fields and $Y_1,Y_2$ as well as $Z_1,Z_2$ are skew-symmetrised.
Now let $Y_1 = \partial_{\rho}$ and $Y_2 = X \in \mathfrak{X}(M)$ and insert $\widetilde{\Ric} = O(\rho^{\frac{n}{2}-1})$  into the right side of \eqref{diffric}. It follows that the resulting expression is zero unless one of $Z_i$ is proportional to $\partial_{\rho}$ and the other one is a tangent vector $Y \in T_xM$. For this choice of vectors we have
\begin{align}
\left(\widetilde{\nabla}_{\partial_{\rho}}^{\frac{n}2-3} \slashed{\Delta} \aR\right)(\partial_{\rho},X, \del_\rho, Y )(x) = 
\left(\widetilde{\nabla}_{\partial_{\rho}}^{\frac{n}2-1}\widetilde{\Ric}\right)(X,Y)(x)
, \label{obstkr0}
\end{align}
for $X,Y\in TM$.
Hence, 
 by definition \eqref{defobstr}
 one obtains a multiple of $\mathcal{O}(X,Y)$,
\begin{align}
\left(\widetilde{\nabla}_{\partial_{\rho}}^{\frac{n}2-3} \slashed{\Delta} \aR\right)(\partial_{\rho},X)(x) = 
k(n) \cdot \partial_{t}^{\flat}(x) \wedge \mathcal{O}(X)^{\flat}(x), \label{obstkr}
\end{align}
for some nonzero numerical constant $k(n)$ which depends only on the dimension $n$. To proceed, we analyse the left side in \eqref{obstkr}. Equations \eqref{ambientconnection} show that the ambient Laplacian applied to some tensor field $\eta$ has an expansion of the form
\begin{equation}\label{delform}
\widetilde{\Delta} \eta \ =\  \widetilde{g}^{IJ} \widetilde{\nabla}_I \widetilde{\nabla}_J \eta\\
 \ =\  \frac{1}{t} \widetilde{\nabla}_{\partial_{\rho}} (\widetilde{\nabla}_{\partial_t} \eta ) + \frac{1}{t} \widetilde{\nabla}_{\partial_{t}} (\widetilde{\nabla}_{\partial_{\rho}} \eta ) - \frac{{ 2}\rho}{t^2} \widetilde{\nabla}_{\partial_{\rho}} (\widetilde{\nabla}_{\partial_{\rho}} \eta) + f \, \widetilde{\nabla}_{\partial_{\rho}} \eta + \widetilde{D} \eta,
\end{equation}
where $f$ is a certain known function on $\widetilde{M}$ and $\widetilde{D}$ is an operator of the form
 \begin{equation}\label{dform}
\widetilde{D} \eta = \sum_{i,j} a_{ij} \widetilde{\nabla}_i (\widetilde{\nabla_j}\eta) + 
\sum_{K \in \{k,0\}}b_{K} \widetilde{\nabla}_K \eta .\end{equation}
We conclude inductively that for an arbitrary ambient tensor field $\eta$ and an element $Z \in \mathfrak{so}(T\widetilde{M})$  one has
\begin{equation}
\begin{aligned} \label{lapl}
\eta = O(\rho^l) &\Rightarrow \widetilde{\Delta}^k \eta = O(\rho^{l-k}), \\
Z(x) \in \mathfrak{hol}_x^l(\widetilde{M},\widetilde{g})  &\Rightarrow  (\widetilde{\Delta}^k Z)(x) \in \mathfrak{hol}_x^{k+l}(\widetilde{M},\widetilde{g}).
\end{aligned}
\end{equation}
Moreover, a straightforward linear algebra calculation using the algebraic Bianchi identity for the ambient curvature reveals that (in abstract ambient indices and with  brackets denoting skew symmetrisation)
\[
(\aR \# \# \aR)_{ABCD} 
=
2 \, \aR_{ABVW}\aR^{VW}_{\ \ \ \ CD}
+8\, 
\aR_{C~[A~}^{\ \ P \ \ Q}\, 
\aR_{B]PDQ}^{~\ }
-2\left(
\widetilde{\Ric}^{V}_{~[A} \aR_{B]VCD}
+
\widetilde{\Ric}^{V}_{~[C} \aR_{D]VAB}
\right).
\]
For each $A,B$ the first term on the right hand side is contained in the holonomy algebra as it is a linear combination of curvature tensors. Similarly, the second term is a linear combination of commutators of curvature tensors and hence also in the holonomy algebra.
Differentiating this $\frac{n}{2}-3$ times in $\partial_{\rho}$ direction and using that $\widetilde{\Ric}$ vanishes to order $\frac{n}{2}-1$ shows via induction that
\begin{align*}
\left(\widetilde{\nabla}_{\partial_{\rho}}^{\frac{n}{2}-3}(\aR \# \# \aR)\right)(\partial_{\rho},X)(x) \in \mathfrak{hol}^{\frac{n}{2}-2}_x(\widetilde{M},\widetilde{g}).
\end{align*}
Next, we focus on the $\rho$-derivatives of $\widetilde{\Delta}$ in \eqref{obstkr}. Using the form of $\widetilde{\Delta} $ in \eqref{delform} and \eqref{dform} and calculating mod $\mathfrak{hol}^{\frac{n}{2}-2}_x(\widetilde{M},\widetilde{g})$, they are given by
\[
\left(\widetilde{\nabla}_{\partial_{\rho}}^{\frac{n}2-3}{\widetilde{\Delta}} \aR\right)(\partial_{\rho},X)(x) \   =\   \widetilde{\nabla}_{\partial_{\rho}}^{\frac{n}2-3}\left(\widetilde{\Delta} \aR(\partial_{\rho},X)\right)(x) \\
\  =\  l(n)  \left(\widetilde{\nabla}_{\partial_{\rho}}^{\frac{n}2-2}\aR\right)(\partial_{\rho},X)(x)
\]
for some numerical constant $l(n)$. Thus, we have found that for $x \in M$, $X \in T_xM$
\begin{align}
 k(n) \partial_{t}^{\flat}(x) \wedge \mathcal{O}(X)^{\flat}(x) - l(n) \cdot \left(\widetilde{\nabla}_{\partial_{\rho}}^{\frac{n}{2}-2}\aR\right)(\partial_{\rho},X) = E_X \label{enned}
 \end{align}
for some $E_X \in \mathfrak{hol}_x^{\frac{n}{2}-2}(\widetilde{M},\widetilde{g})$. Now insert $Y \in T_xM$ and $\partial_{\rho}$ into the 2-forms in \eqref{enned}. One obtains
\begin{align}
k(n) \mathcal{O}(X,Y) - l(n) \, \left(\widetilde{\nabla}_{\partial_{\rho}}^{\frac{n}{2}-2}\aR\right)(\partial_{\rho},X, Y , \partial_{\rho}) = E_X(Y,\partial{\rho}). \label{ergebnis}
\end{align}
By   \cite[Proposition 6.6]{fefferman-graham07} we have
\begin{align*}
2\left(\widetilde{\nabla}_{\partial_{\rho}}^{\frac{n}{2}-2}\aR\right)(\partial_{\rho},\partial_i, \partial_j , \partial_{\rho}) = \text{tf}(\partial_{\rho}^{\frac{n}{2}} g_{ij}) + K_{ij},
\end{align*}
where $K_{ij}$ can be expressed algebraically in terms of $(\partial_{\rho}^{k}g_{ij})_{|\rho = 0}$, $k < \frac{n}{2}$, as well as $g^{ij}_{|\rho = 0}$. Moreover, as follows from reviewing the above argument, $E$ can be expressed algebraically in terms of derivatives of $g_{\rho}$ and its inverse in $M$-directions and at most $\frac{n}{2}-1$ derivatives in $\rho$-direction and $\mathcal{O}$ is a natural tensor invariant. But then, as  the ambiguity, i.e. the term $\text{tf}(\partial_{\rho}^{\frac{n}{2}} g_{ij})$, can be arbitrary, equation \eqref{ergebnis} can only be true if $l(n) = 0$ from which the theorem follows if $n > 4$.

In general, it holds in every dimension that for $X \in TM$ one has \[\text{tr}_g \nabla^{nc}_{\cdot}\R^{nc}(X,\cdot) =   (n-4) C(X;\cdot, \cdot) + B(X) \wedge s_-^{\flat} \in \mathfrak{hol}(M,c),\] 
where 
\[ B_{ij}=\nabla^k C_{ijk}-P^{kl}W_{kijl},
\]
is the Bach tensor, and where for each pair $j,k$ we understand $\R^{nc}_{jk}$ as an element in $\Lambda^2\cal T^*$.
From this observation the theorem follows in case $n=4$, as here $\mathcal{O}$ is a multiple of the Bach tensor
 \eprf
\bbem 
Consider the case $n=6$. It is an entirely mechanical process to turn the formulas in \cite{gope}, section 4B into an explicit formula for derivatives of the tractor curvature which gives a more explicit proof of Theorem \ref{obstrhol} for this dimension.
In order to make this more explicit, assume that there is a metric $g \in c$ and a totally lightlike subspace $\cal L \subset TM$ such that $\mathrm{Im}(\Ric^g) \subset \cal L$ and $\cal L$ is $\nabla^g$ invariant. Such geometries correspond to invariant null subspaces which are invariant under $\Hol(M,c)$ and are of importance in Section \ref{spin43}.
 Let $\nabla$ denote the tractor derivative $\nabla^{nc}$ coupled to $\nabla^g$. One can explicitly compute for this case that
\[
g^{ij} s_-^{\flat} \wedge \mathcal{O}_{mi} \partial_j^{\flat} =   g^{ij} g^{kl} \nabla_i \nabla_j \nabla_k \R^{nc}_{ml} + 4 \mathsf{P}^{ij} \nabla_i \R^{nc}_{mj}+ 2 \left[\R^{nc}_{mi}, \nabla^{nc}_j {\R^{nc}}^{ij} \right] + 2 {C_{m}}^{kl} \R^{nc}_{kl}.
\]
\ebem 
\section{The conformal holonomy distribution}
In this section we will introduce and study the fundamental object that provides us with the link between conformal holonomy and the ambient obstruction tensor.

\subsection{The conformal holonomy distribution} \label{holdist}
Let $(M,c=[g])$ be a conformal manifold of arbitrary signature $(p,q)$ and dimension $n=p+q$. For $x \in M$ consider the conformal holonomy algebra $\mathfrak{hol}_x(M,c) \subset \mathfrak{so}(\mathcal{T}_xM,h_x)$. Fix $g \in c$. Theorem \ref{obstrhol} motivates us to study the following subspaces of $T_xM$,
\begin{align}
\mathcal{E}^g_x := \{ pr_{T_xM} \text{Im}(A) \mid A \in \mathfrak{hol}_x(M,c), A \mathcal{I} = 0, h(A \mathcal{I}^{\bot}, \mathcal{I}^{\bot}) = 0 \} \subset T_xM.
\end{align}
It follows immediately from the transformation formulas that $\mathcal{E}^g_x$ does not depend on the choice of $g \in c$, so that we can  write $\mathcal{E}_x$. With respect to  $g \in c$, however, $\mathcal{E}_x$ is identified with the space of elements of the holonomy algebra that are of the form $s_-^{\flat} \wedge X^{\flat}$ for some $X \in T_xM$. Equivalently and more invariantly, the space $\mathcal{E}_x$ can be identified with the space $\mathfrak{hol}_x(M,c) \cap \mathfrak{g}_1$. 
We call the subset of $TM$ defined by 
\[\cal E :=\bigcup_{x\in M} {\cal E}_x \subset TM\] 
the {\em conformal holonomy distribution}. This is a slight abuse of terminology, as the dimension of $\cal E_x$ may vary with $x$, so that $\cal E$ is not a vector distribution in the usual sense. 
Indeed, the holonomy algebras w.r.t.~to different base points are related by the adjoint action of elements in $\mathbf{O}(p+1,q+1)$ that generically do not lie in the stabiliser of $s_-$. 
Instead, 
 define a function on $M$ by \[ r^{\cal E} (x):= \text{dim }\mathcal{E}_x. \] 
The function $r^{\cal E}$ 
need not be constant over $M$ but leads to an obvious stratification
\[ M = \bigcup_{k = 0}^n M_k, \] 
where $M_k = \{ x \in M \mid r^{\cal E}(x) = k \}$. 

\subsection{Relation to the curved orbit decomposition}\label{cosec}
We will now proceed to establish a relation
between the stratification defined by $\mathcal{E}$ and the curved orbit decomposition  for holonomy reductions of arbitrary Cartan geometries in \cite{cgh}. When doing this, we will restrict to the case that $\mathfrak{hol}(M,c)$ equals the stabiliser of some tensor:

Starting with the tractor data $(\mathcal{T} \rightarrow M,h,\nabla^{nc})$ one recovers an underlying Cartan geometry as follows \cite{cap-gover03}: Fix a lightlike line $L \subset \mathbb{R}^{p+1,q+1}$ and at each point $x \in M$ consider the set of all linear, orthogonal maps $\mathbb{R}^{p+1,q+1} \rightarrow \mathcal{T}_x$ which additionally map $L$ to $\mathcal{I}_x$. This defines a principal $P$- bundle $\mathcal{G} \rightarrow M$, where $P \subset G = \mathbf{O}(p+1,q+1)$ is the stabiliser subgroup of $L$. Then the tractor connection $\nabla^{nc}$  induces a Cartan connection $\omega \in \Omega^1(\mathcal{G},\mathfrak{g})$ of type $(G,P)$, i.e., $\omega$ is equivariant w.r.t.~the $P$-right action, reproduces the generators of fundamental vector fields, and provides a global parallelism $T \mathcal{G} \cong \mathcal{G} \times \mathfrak{g}$. In this way, $(\mathcal{G} \rightarrow M, \omega)$ is a Cartan geometry of type $(G,P)$. Conversely, one obtains the standard tractor bundle from these data as $\mathcal{T} = \mathcal{G} \times_P \mathbb{R}^{p+1,q+1} = \widehat{\mathcal{G}} \times_G \mathbb{R}^{p+1,q+1}$, where $\widehat{\mathcal{G}} = \mathcal{G} \times_P G$ denotes the enlarged $G$-bundle. The Cartan connection $\omega$ lifts to a principal bundle connection $\widehat{\omega}$ on $\widehat{\mathcal{G}}$ and $\nabla^{nc}$ is then the induced covariant derivative on the associated bundle $\mathcal{T}$.

Now assume that there is a faithful representation $\rho$ of $G$ on some vector space $V$  with associated vector  bundle $\mathcal{H} =  \widehat{\mathcal{G}} \times_G V$ and induced covariant derivative $\nabla^{\mathcal{H}}$  such that $\Hol(M,c)$ equals pointwise the stabiliser of a $\nabla^{\mathcal{H}}$-parallel section $\psi \in \Gamma(M,\mathcal{H})$ (if actually $(M,c)$ is spin, the same discussion is possible for spin coverings of the groups and bundles under consideration). Such a $\psi$ is equivalently encoded in a $G$-equivariant map $s : \widehat{\mathcal{G}} \rightarrow V$ which is constant along $\widehat{\omega}$-horizontal curves. To this situation the general machinery developed in \cite{cgh} applies and one defines for $x \in M$ the $P$- type of $x$ (w.r.t.~$\psi$) to be the $P$-orbit $s(\mathcal{G}_x) \subset V$. Then $M$ decomposes into a union of initial submanifolds $M_{\alpha}$ of elements with the same $P$-type, where $\alpha$ runs over all possible $P$-types, which in turn can be found by looking at the homogeneous model $G \rightarrow G/P$. In   \cite{cgh} the $M_{\alpha}$ are called {\em curved orbits} and it was shown that they  carry a naturally induced Cartan geometry  of type $(H,P\cap H)$.

\begin{Theorem}\label{cotheo}
If $\Hol(M,c)$ equals to the stabiliser of a tensor, then the subsets of $M$ on which $r^\mathcal{E}$ is constant  are unions of curved orbits in the sense of \cite{cgh}. In particular, they are unions of initial submanifolds.
\end{Theorem}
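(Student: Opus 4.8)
The plan is to show that the stratification by $r^{\mathcal{E}}$ refines to a union of curved orbits by relating the integer $r^{\mathcal{E}}(x) = \dim(\mathfrak{hol}_x(M,c)\cap \mathfrak{g}_1)$ to the $P$-type of $x$. First I would recall that by the construction in Section \ref{cosec}, the holonomy reduction is witnessed by a $\nabla^{\mathcal{H}}$-parallel section $\psi \in \Gamma(M,\mathcal{H})$ whose stabiliser is pointwise $\Hol(M,c)$, and that the $P$-type of $x$ is the $P$-orbit $s(\mathcal{G}_x) \subset V$, where $s\colon \widehat{\mathcal{G}}\to V$ is the equivariant map corresponding to $\psi$. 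The key point is that for a fixed frame $u \in \mathcal{G}_x$, the isotropy subalgebra $\mathfrak{stab}_{\mathfrak{g}}(s(u)) \subset \mathfrak{g} = \mathfrak{so}(p+1,q+1)$ is precisely $\mathfrak{hol}_x(M,c)$ transported to $\mathbb{R}^{p+1,q+1}$ via $u$; this identification sends the grading component $\mathfrak{g}_1$ of $\mathfrak{so}(\mathcal{T}_x,h_x)$ (which, recall, depends only on the line $\mathcal{I}_x$, not on a choice of $g$) to the fixed grading component $\mathfrak{g}_1 \subset \mathfrak{g}$ determined by $L$, because $u$ maps $L$ to $\mathcal{I}_x$ by definition of $\mathcal{G}$. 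Hence $r^{\mathcal{E}}(x) = \dim\big(\mathfrak{stab}_{\mathfrak{g}}(s(u)) \cap \mathfrak{g}_1\big)$.

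Next I would observe that this last quantity depends only on the $P$-orbit of $s(u)$ in $V$, i.e.\ only on the $P$-type of $x$. Indeed, for $p \in P$ we have $\mathfrak{stab}_{\mathfrak{g}}(\rho(p)\cdot v) = \mathrm{Ad}(p)\,\mathfrak{stab}_{\mathfrak{g}}(v)$, and since $P$ preserves the filtration component $\mathfrak{g}_1$ (it is the lowest piece of the parabolic $\mathfrak{p} = \mathfrak{g}_0 \oplus \mathfrak{g}_1$, hence $\mathrm{Ad}(P)$-invariant), we get $\mathrm{Ad}(p)\big(\mathfrak{stab}_{\mathfrak{g}}(v) \cap \mathfrak{g}_1\big) = \mathfrak{stab}_{\mathfrak{g}}(\rho(p)v) \cap \mathfrak{g}_1$, so the dimension of the intersection is constant along $P$-orbits. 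Moreover it is well-defined independently of the choice of $u \in \mathcal{G}_x$, since any two such frames differ by the $P$-right action. Therefore the function $x \mapsto r^{\mathcal{E}}(x)$ factors through the map assigning to $x$ its $P$-type $\alpha$; writing $r^{\mathcal{E}}(x) = \varrho(\alpha)$ for a function $\varrho$ on the (finite) set of $P$-types, we conclude that each level set $\{x : r^{\mathcal{E}}(x) = k\} = \bigcup_{\alpha:\,\varrho(\alpha)=k} M_\alpha$ is a union of curved orbits. The final sentence then follows because each $M_\alpha$ is an initial submanifold by \cite{cgh}, so any union of them is a union of initial submanifolds.

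The main obstacle I anticipate is making the identification $\mathfrak{hol}_x(M,c) \cong \mathfrak{stab}_{\mathfrak{g}}(s(u))$ fully rigorous and checking that it is compatible with the gradings in the sharp sense needed here. The subtlety is that the $|1|$-grading of $\mathfrak{so}(\mathcal{T}_x,h_x)$ used to define $\mathcal{E}_x$ is metric-dependent as a splitting, but the filtration piece $\mathfrak{g}_1 = \{\Phi(\mu,0,0)\}$ — equivalently, the space of $A$ with $A\mathcal{I}=0$ and $h(A\mathcal{I}^\perp,\mathcal{I}^\perp)=0$ — is conformally invariant and depends only on $\mathcal{I}_x$; this is exactly the content of the remark following \eqref{idef} that $\mathcal{E}_x \cong \mathfrak{hol}_x(M,c)\cap\mathfrak{g}_1$ is well-defined. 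I would need to spell out that under the tautological identification $u\colon \mathbb{R}^{p+1,q+1}\xrightarrow{\sim}\mathcal{T}_x$ with $u(L)=\mathcal{I}_x$, the holonomy algebra at $x$ (acting on $\mathcal{T}_x$) corresponds to a subalgebra of $\mathfrak{g}$ containing or avoiding $\mathfrak{g}_1$ exactly as dictated by whether the parallel object $\psi$, read off in the frame $u$, is stabilised by $\mathfrak{g}_1$-elements — and that this is invariant under changing $u$ within the fibre only up to $\mathrm{Ad}(P)$, which is harmless since $\mathrm{Ad}(P)$ preserves $\mathfrak{g}_1$. Once this bookkeeping is in place, the rest is essentially formal.
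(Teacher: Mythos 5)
Your proposal is correct and follows essentially the same route as the paper: identify $\mathfrak{hol}_x(M,c)$ via a frame $u\in\mathcal{G}_x$ with the stabiliser in $\mathfrak{g}$ of the parallel object read off in that frame (using faithfulness of $\rho$), note that equality of $P$-types gives conjugation of these stabilisers by some $\mathrm{Ad}(p)$, $p\in P$, and use that $\mathrm{Ad}(P)$ preserves $\mathfrak{g}_1$ together with $u(L)=\mathcal{I}_x$ to conclude that $\dim(\mathfrak{hol}_x\cap\mathfrak{g}_1)=r^{\mathcal{E}}(x)$ is constant along each curved orbit. The only difference is presentational (you phrase it as $r^{\mathcal{E}}$ factoring through the $P$-type, the paper fixes a curved orbit and compares two of its points), so no gap to report.
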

\bprf
We fix a  curved orbit $M_{\alpha}$ with element $x_1$. By definition, $x_2 \in M_{\alpha}$ if and only if
\begin{align}
s(\mathcal{G}_{x_1}) = s(\mathcal{G}_{x_2}). \label{ptype}
\end{align} 
We unwind the condition \eqref{ptype} as follows: Let $u_{x_i} \in \mathcal{G}_{x_i}$ and let $[u_{x_i}] : V \ni v \mapsto [u_{x_i},v] \in \mathcal{H}_{x_i}$ denote the associated fibre isomorphism. As $\rho$ is faithful the holonomy group $Hol_{u_{x_i}}(\widehat{\omega}) \subset G$ will coincide with the stabiliser of $[u_{x_i}]^{-1}\psi_{x_i} \subset V$ under the $(\rho,G)$-action. Moreover \eqref{ptype} is equivalent to the existence of $p \in P$ such that 
\begin{align*}
\rho(p) \left([u_{x_1}]^{-1} \psi_{x_1} \right) = [u_{x_2}]^{-1} \psi_{x_2},
\end{align*}
from which one deduces that 
\begin{align}
Ad(p^{-1})(\mathfrak{hol}_{u_{x_1}}(\widehat{\omega})) = \mathfrak{hol}_{u_{x_2}}(\widehat{\omega}). \label{holre}
\end{align}
Using that $[\mathfrak{g}_i,\mathfrak{g}_j] \subset \mathfrak{g}_{i+j}$, one sees that \eqref{holre} restricts to a map between the $\mathfrak{g}_1$-components of $\mathfrak{hol}_{u_{x_i}}(\widehat{\omega})$ which therefore have the same dimension. As $\mathfrak{hol}_x = [u_x] \circ \mathfrak{hol}_{u_x}(\widehat{\omega}) \circ [u_x]^{-1} \subset \mathfrak{so}(\mathcal{T}_x,h_x)$ and $[u_x]$ preserves the lightlike line by definition of $\mathcal{G}$, we obtain that also the dimensions of $\mathfrak{hol}_{x_i} \cap \mathfrak{g}_1$ agree. Consequently, $r^{\cal E}$ is constant  on the curved orbit $M_{\alpha}$.\eprf

Theorem \ref{cotheo} shows that, in general,   the holonomy distribution $\mathcal{E}$ as studied here  will induce a stratification of $M$ that  is {\em coarser} than the curved orbit decomposition in \cite{cgh}. The following example shows that in some cases it  induces the same stratification.
 \begin{bsp}
Suppose $(M,c)$ is of Riemannian signature and $\Hol_x(M,c)$ equals the stabiliser of some tractor $\zeta_x \in \mathcal{T}_x$. For any metric $g \in c$ write $\zeta = ( \alpha , Y , \beta )^\top$ for smooth functions $\alpha, \beta$ and a vector field $Y$ on $M$. Evaluating $\nabla^{nc} \zeta = 0$ using \eqref{coder} yields 
\[Y = \text{grad}^g \beta,\quad \alpha g = \beta \mathsf{P}^g - \text{Hess}^g (\beta).\] An element $V^{\flat} \wedge s_-^{\flat}$ lies in $\mathfrak{hol}_x(M,c) \cap \mathfrak{g}_1$ if and only if $d\beta(V) = 0$ as well as $\beta \cdot V = 0$ at $x$. If $h(\zeta,\zeta) \neq 0$, we conclude that 
\[M = M_0 \cup M_{n-1}, \quad\text {with $ M_0 = \{ \beta \neq 0 \}$ and $M_{n-1} = \{ \beta = 0 \}$.}\] For $x \in M_{n-1}$ we have $\mathcal{E}_x = \text{ker }d\beta \neq T_xM$. In particular, $M_{n-1}$ is a smooth embedded submanifold of $M$. Similarly, if $h(\zeta,\zeta) = 0$, we have \[M = M_{0} \cup M_{n} = \{\beta \neq 0 \} \cup \{ \beta = 0\}.\] Here $ \{ \beta = 0\}$  consists only of isolated points because  $\beta(x) = 0$ implies  that $d\beta(x) = 0$ and $\text{Hess}^g(\beta)(x)$ is proportional to $g_x$.
\end{bsp}

\subsection{Open sets adapted to the holonomy distribution}\label{adaptsec}
We analyse the function $r^{\cal E}$ in more detail. Obviously, if $\mathfrak{hol}_x(M,c)$ is generic at some  point of $M$, i.e., if $\mathfrak{hol}_x(M,c)=\so(p+1,q+1)$, then $r^{\cal E}\equiv n$. Conversely, one finds:

\begin{Proposition} \label{lightlike}
Suppose that there is a curve $\gamma$ in $M$ with $g(\dot{\gamma},\dot{\gamma}) \neq 0$ and $r^{\cal E} \circ \gamma \equiv n$. Then $\mathfrak{hol}(M,c)$ is generic. In particular, $r^{\cal E} \equiv n$. 
\end{Proposition}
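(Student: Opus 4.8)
The plan is to transport all the relevant holonomy data along $\gamma$ to a single tractor fibre and then to exploit that the conformal holonomy is a Lie algebra there. Replacing $\gamma$ by a subcurve we may assume $g(\dot\gamma(0),\dot\gamma(0))\neq 0$; set $x_0:=\gamma(0)$, $X_0:=\dot\gamma(0)\in T_{x_0}M\subseteq\mathcal T_{x_0}$ and $\mathfrak h:=\mathfrak{hol}_{x_0}(M,c)$. Since $M$ is connected, $\nabla^{nc}$-parallel transport $P_t\colon\mathcal T_{x_0}\to\mathcal T_{\gamma(t)}$ along $\gamma|_{[0,t]}$ conjugates holonomy algebras, $\mathrm{Ad}(P_t)^{-1}\mathfrak{hol}_{\gamma(t)}(M,c)=\mathfrak h$. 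For a lightlike line $L$ in a tractor fibre write $\mathfrak g_1(L)$ for the subspace $\{A\in\mathfrak{so}(\mathcal T,h)\mid\mathrm{Im}\,A\subseteq L^{\perp},\ A(L^{\perp})\subseteq L\}$, which for $L=\mathcal I$ reduces to the $\mathfrak g_1$ of Section \ref{holdist}, depends only on $L$, and is equivariant under isometries. The hypothesis $r^{\cal E}\circ\gamma\equiv n$ means $\mathfrak g_1(\mathcal I_{\gamma(t)})\subseteq\mathfrak{hol}_{\gamma(t)}(M,c)$, so applying $\mathrm{Ad}(P_t)^{-1}$ gives $\mathfrak g_1(L(t))\subseteq\mathfrak h$ for all small $t$, where $L(t):=P_t^{-1}\mathcal I_{\gamma(t)}$ is a smooth curve of lightlike lines in $\mathcal T_{x_0}$ with $L(0)=\mathcal I_{x_0}$. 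Fixing $g\in c$ and writing $L(t)=\mathbb R\,v(t)$ with $v(t)=P_t^{-1}(s_-|_{\gamma(t)})$, formula \eqref{coder} yields $\dot v(0)=\nabla^{nc}_{X_0}s_-=X_0$; this is the only place the non-null hypothesis enters.

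First I would read off the first-order information. By Witt's theorem choose a smooth path $g(t)$ in $\mathbf O(\mathcal T_{x_0},h)$ with $g(0)=\mathrm{Id}$, $g(t)\mathcal I_{x_0}=L(t)$ and $\dot g(0)=\Phi(0,0,X_0)\in\mathfrak g_{-1}$; the last choice is admissible because the only constraint on $\dot g(0)$ is $\dot g(0)\,s_-\in X_0+\mathbb R s_-$, and $\Phi(0,0,X_0)s_-=X_0$. Then $\mathfrak g_1(L(t))=\mathrm{Ad}(g(t))\mathfrak g_1$, and differentiating the curve $t\mapsto\mathrm{Ad}(g(t))B\in\mathfrak h$ at $t=0$ for fixed $B\in\mathfrak g_1$ shows $[\Phi(0,0,X_0),\mathfrak g_1]\subseteq\mathfrak h$. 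The brackets \eqref{brackets} give $[\Phi(0,0,X_0),\Phi(\mu,0,0)]=\Phi\bigl(0,(\mu(X_0),\mu\wedge X_0^{\flat}),0\bigr)$ for every $1$-form $\mu$; taking $\mu=X_0^{\flat}$ and using $g(X_0,X_0)\neq0$ puts the grading element of $\mathfrak g_0$ into $\mathfrak h$, hence $\Phi\bigl(0,(0,\mu\wedge X_0^{\flat}),0\bigr)\in\mathfrak h$ for all $\mu$, and bracketing two of these (again using $g(X_0,X_0)\neq0$) yields all of $\mathfrak{so}(T_{x_0}M)\subseteq\mathfrak h$. Together with $\mathfrak g_1=\mathfrak g_1(L(0))\subseteq\mathfrak h$ this shows $\mathfrak g_0\oplus\mathfrak g_1\subseteq\mathfrak h$.

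It then remains to produce a single element of $\mathfrak h$ outside $\mathfrak g_0\oplus\mathfrak g_1=\mathrm{stab}_{\mathfrak{so}}(\mathcal I_{x_0})$, and for this I would pass to second order in $t$. The lightlike condition $h(v(t),v(t))\equiv0$ together with $\dot v(0)=X_0$ forces the $s_+$-component of $v(t)$ to be $-\tfrac12 g(X_0,X_0)\,t^{2}+O(t^{3})$; choosing $Z\in T_{x_0}M$ with $Z\perp X_0$, $Z\neq0$, and $u(t)\perp v(t)$ with $u(0)=Z$ and vanishing $s_-$-component, the element $v(t)^{\flat}\wedge u(t)^{\flat}\in\mathfrak g_1(L(t))\subseteq\mathfrak h$ has $\mathfrak g_{-1}$-component $\Phi\bigl(0,0,-\tfrac12 g(X_0,X_0)\,t^{2}Z+O(t^{3})\bigr)$, which is nonzero for small $t\neq0$. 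Hence $\mathfrak h$ properly contains $\mathfrak g_0\oplus\mathfrak g_1$. Since $\mathfrak h$ is then an $\mathrm{ad}(\mathfrak g_0)$-invariant subalgebra strictly containing $\mathfrak g_0\oplus\mathfrak g_1$, and the complementary $\mathfrak g_0$-module $\mathfrak g_{-1}$ is irreducible and does not occur in $\mathfrak g_0\oplus\mathfrak g_1$ (the grading element acts by $-1$ on it), we get $\mathfrak g_{-1}\subseteq\mathfrak h$, so $\mathfrak h=\mathfrak{so}(p+1,q+1)$, i.e.\ $\mathfrak{hol}(M,c)$ is generic. Conjugating by parallel transport along arbitrary paths then gives $\mathfrak{hol}_y(M,c)=\mathfrak{so}(\mathcal T_y,h_y)$ for every $y\in M$, hence $\mathcal E_y=\mathfrak g_1\cong T_yM$ and $r^{\cal E}\equiv n$.

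The main obstacle is the second step: extracting genericity from the first-order datum ``how the canonical filtration line moves along $\gamma$''. The key bracket computations succeed precisely because $g(\dot\gamma(0),\dot\gamma(0))\neq0$ allows one to pull the grading element, and then all of $\mathfrak{so}(T_{x_0}M)$, into $\mathfrak h$; if $\dot\gamma(0)$ were lightlike these brackets would stay inside $\mathfrak g_0\oplus\mathfrak g_1$ and the $\mathfrak g_{-1}$-component in the second-order step would vanish, so the non-null hypothesis is genuinely used.
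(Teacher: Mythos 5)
Your proposal is correct and is essentially the paper's own argument in a more explicitly justified packaging: the paper differentiates the sections $s_-^{\flat}\wedge V^{\flat}\in\mathfrak{hol}_{\gamma(t)}(M,c)$ covariantly along $\gamma$ using \eqref{derform} (one derivative plus brackets, enabled by $g(\dot\gamma,\dot\gamma)\neq 0$, gives the grading element and all of $\fg_0$; a second derivative gives $\fg_{-1}$), which is exactly your first- and second-order analysis of the transported family $\fg_1(L(t))\subset\mathfrak{hol}_{x_0}(M,c)$, since differentiating the $\mathrm{Ad}(P_t)^{-1}$-conjugated data is the covariant derivative along $\gamma$. Your Taylor expansion of the moving null line and the concluding $\fg_0$-module (or subtract-the-$\fp$-part) step replace the paper's second covariant differentiation, but the mechanism and the use of the non-null hypothesis are the same.
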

\bprf
All calculations are carried out w.r.t.~some fixed $g \in c$. By assumption, $s_-^{\flat} \wedge V^{\flat} \in \mathfrak{hol}_{\gamma(t)}(M,c)$ for every vector field $V$ along $\gamma$. Applying $\nabla^{nc}_{\dot{\gamma}}$ to this expression using \eqref{derform} reveals that 
\begin{align} - g(V,\dot{\gamma}) s_-^{\flat} \wedge s_+^{\flat} + \dot{\gamma}^{\flat} \wedge V^{\flat} \in \mathfrak{hol}_{\gamma(t)}(M,c). \label{hole}\end{align}
Letting $V = \dot{\gamma}$ shows that $s_-^{\flat} \wedge s_+^{\flat} \in \mathfrak{hol}_{\gamma(t)}(M,c)$. Moreover, letting $(V_1,V_2,\dot{\gamma})$ being mutually orthogonal to each other and taking the Lie brackets of the expressions \eqref{hole} with $V= V_1$ and $V = V_2$, respectively, shows that \[||\dot{\gamma}||^2 V^{\flat}_1 \wedge V^{\flat}_2 \in \mathfrak{hol}_{\gamma(t)}(M,c).\] But this establishes that $\mathfrak{g}_0 \in \mathfrak{hol}_{\gamma(t)}(M,c)$. Thus, $\mathfrak{g_1} \oplus \mathfrak{g}_0 \in \mathfrak{hol}_{\gamma(t)}(M,c)$. Differentiating elements $\dot{\gamma}^{\flat} \wedge V^{\flat} \in \mathfrak{hol}_{\gamma(t)}(M,c)$ in direction of $\gamma$, where $V$ is again a vector field along $\gamma$ shows using \eqref{derform} that also $\mathfrak{g}_{-1} \cap \dot{\gamma}^{\perp}$ is contained in the inifitesimal holonomy along $\gamma$ and differentiating $s_-^{\flat} \wedge s_+^{\flat}$ along $\gamma$ shows that all of $\mathfrak{g}_{-1}$ is contained in the holonomy. Thus, $\mathfrak{hol}_{\gamma(t)}(M,c)$ is generic along $\gamma$, and thus generic everywhere.
\eprf

In order to continue with our analysis, we need to show that there are \textit{sufficiently many} open sets $U$ on which $r^{\cal E}  $ is constant, i.e., such that $\cal E|_{U}$ is a vector bundle,  and on which there is a basis of local smooth sections of $U \rightarrow \mathcal{E}$. For this purpose we define:
An open set $U \subset M$ is an  {\em $\mathcal{E}$-adapted open set} if 
\begin{enumerate}
\item $r^{\cal E} \equiv  k$  constant  on $U$,
\item there are smooth and pointwise linearly independent sections $V_1,...V_k : U  \rightarrow \mathcal{E} $.
\end{enumerate}
Then it holds:
\begin{Theorem} \label{eadapated}
For each open set 
$U \subset M$  there exists an $\cal E$-adapted open subset $V \subset U$. In particular, 
there is an open dense subset of $M$ which is the union of $\cal E$-adapted open sets.
\end{Theorem}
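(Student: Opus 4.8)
The plan is to prove Theorem~\ref{eadapated} by an inductive construction on the value of $r^{\cal E}$, exploiting the fact that $r^{\cal E}$ is lower semicontinuous. First I would observe that $r^{\cal E}$ is lower semicontinuous: if $A_1,\dots,A_k\in\mathfrak{hol}_{x_0}(M,c)\cap\mathfrak{g}_1$ are linearly independent, then each $A_i$ can be written as an iterated covariant derivative of the tractor curvature (by Theorem~\ref{obstrhol}'s use of infinitesimal holonomy, or directly since $\mathfrak{hol}'_x\subset\mathfrak{hol}_x$ and the relevant curvature expressions depend smoothly on $x$). More precisely, fix $g\in c$; near $x_0$ the conformal holonomy algebra contains the span of the values at $x$ of all smooth sections $x\mapsto \nabla^{nc}_{X_1}(\cdots \R^{nc}(X_{l-1},X_l))(x)$, and the condition that such an adjoint-tractor field lie in $\mathfrak{g}_1$ (i.e.\ be of the form $s_-^\flat\wedge X^\flat$) is an open condition on coefficients once we have lined up $k$ of them that are independent and of this form at $x_0$. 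Hence on a neighbourhood of $x_0$ these $k$ sections remain pointwise linearly independent elements of $\mathfrak{hol}_x(M,c)\cap\mathfrak{g}_1$, giving $r^{\cal E}(x)\ge k$ there.

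The subtlety is that a generator of $\mathfrak{hol}_x\cap\mathfrak{g}_1$ at $x_0$ need not \emph{a priori} be represented by a single smooth adjoint-tractor field lying in $\mathfrak{g}_1$ on a whole neighbourhood, because the splitting into $\mathfrak{g}_{-1}\oplus\mathfrak{g}_0\oplus\mathfrak{g}_1$ depends on the point (the curvature-derived sections need not stay in $\mathfrak{g}_1$ off $x_0$). So the next step is: given $x_0\in U$ with $r^{\cal E}(x_0)=k$, pick smooth adjoint-tractor fields $\Phi_1,\dots,\Phi_m$ defined near $x_0$, each an iterated $\nabla^{nc}$-derivative of $\R^{nc}$, whose values at $x_0$ span a complement-containing set so that suitable linear combinations $A_i=\sum_a \lambda_{ia}\Phi_a(x_0)$ give a basis of $\mathfrak{hol}_{x_0}\cap\mathfrak{g}_1$. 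Define $V_i := (\text{projection to }\mathfrak{g}_1)\big(\sum_a \lambda_{ia}\Phi_a\big)$, which is a smooth section of $\mathfrak{g}_1\cong TM$ near $x_0$ and equals $A_i$ at $x_0$; in general $V_i(x)$ need not lie in $\mathfrak{hol}_x$. This is the main obstacle, and it is overcome by combining lower semicontinuity with a dimension count: on the set $\{r^{\cal E}=k\}$ near $x_0$ (which by Theorem~\ref{cotheo}, in the relevant cases, is at worst a union of initial submanifolds, but more robustly: on the open set where $r^{\cal E}\le k$, which contains a neighbourhood once we also have $r^{\cal E}\ge k$ there) the $k$-dimensional subspace $\mathfrak{hol}_x\cap\mathfrak{g}_1$ varies smoothly as a subbundle, because it is the span of the $k$ smooth independent sections produced in the first paragraph.

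Thus the clean route is: (i) show $r^{\cal E}$ is lower semicontinuous, hence locally constant on a dense open set --- indeed $\{x: r^{\cal E}(x)=\max_{y\in U'}r^{\cal E}(y)\}$ is open in any small $U'\subset U$; (ii) on such an open set $V$ where $r^{\cal E}\equiv k$, the $k$ linearly independent curvature-derived sections from step (i) automatically lie in $\mathfrak{g}_1$ at every point (since $\mathfrak{hol}_x\cap\mathfrak{g}_1$ is $k$-dimensional and already contains their span at the base point and by semicontinuity at least $k$ independent ones nearby --- a rank-count forces equality, so the span of these $k$ sections \emph{is} $\mathfrak{hol}_x\cap\mathfrak{g}_1$ on all of $V$); projecting to $TM$ gives the required smooth frame $V_1,\dots,V_k$ of $\cal E|_V$. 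For the "open dense" conclusion, cover $M$ by coordinate balls, take inside each the (open, nonempty) locus where $r^{\cal E}$ attains its maximum on that ball, obtain an $\cal E$-adapted open set there, and note the union of these is open and dense. I expect step (ii) --- arguing that the $k$ explicit sections land in $\mathfrak{g}_1$ pointwise, which amounts to the constancy of $r^{\cal E}$ forcing no "extra" $\mathfrak{g}_{-1}\oplus\mathfrak{g}_0$ components to appear --- to be the delicate point, and I would handle it precisely by the rank argument just described rather than by any direct computation.
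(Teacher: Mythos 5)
The decisive gap is your semicontinuity claim: $r^{\cal E}$ is \emph{not} lower semicontinuous, and consequently your ``clean route'' of taking, inside each small ball, the locus where $r^{\cal E}$ attains its \emph{maximum} and declaring it open does not work. The argument you give for lower semicontinuity fails at exactly the point you yourself flag: extending $k$ independent elements of $\mathfrak{hol}_{x_0}(M,c)\cap\mathfrak{g}_1$ to smooth sections of the (locally smooth) holonomy bundle does not keep them in $\mathfrak{g}_1$, since membership in $\mathfrak{g}_1$ is a closed linear condition, not an open one; and your proposed repair is circular, because the rank count on ``the open set where $r^{\cal E}\le k$'' presupposes both the openness of that set (which is the \emph{opposite} semicontinuity, not yet established in your text) and the inequality $r^{\cal E}\ge k$ nearby, which is precisely what you are trying to prove. (The auxiliary claim that each $A_i\in\mathfrak{hol}_{x_0}\cap\mathfrak{g}_1$ is a value of an iterated curvature derivative is also unjustified in the smooth category: the inclusion $\mathfrak{hol}'_x\subset\mathfrak{hol}_x$ goes the wrong way for this, and $\dim\mathfrak{hol}'_x$ itself may jump.) In fact only upper semicontinuity holds: since $\dim\mathfrak{hol}_x$ is locally constant and $\mathfrak{hol}_x$ admits a smooth local basis, one has $r^{\cal E}(x)=\dim\mathfrak{hol}_x(M,c)-\mathrm{rk}\,A_x$ for a smoothly varying matrix $A_x$ built from the $\mathfrak{g}_0\oplus\mathfrak{g}_{-1}$ components of that basis, and lower semicontinuity of matrix rank makes $\{r^{\cal E}\le k\}$ open. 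The example discussed after Theorem \ref{cotheo} shows your maximum-locus step failing concretely: for a Riemannian conformal class whose holonomy equals the stabiliser of a non-null standard tractor, $r^{\cal E}=n-1$ exactly on the hypersurface $\{\beta=0\}$ and $r^{\cal E}=0$ on its open dense complement, so on a ball meeting that hypersurface the maximum of $r^{\cal E}$ is attained only on a set with empty interior.

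The paper's proof runs in the opposite direction: choose a smooth local basis $v_i^\flat\wedge s_-^\flat+A^i$ of the holonomy bundle over a chart, use the rank identity above to conclude that every sublevel set $\{r^{\cal E}\le k\}$ is open, and then obtain an open subset of $U$ on which $r^{\cal E}$ is constant by descending to the \emph{minimal} value attained on $U$ (if $r^{\cal E}\ge l$ on $U$ and $r^{\cal E}(x)=l$ for some $x\in U$, then $\{r^{\cal E}\le l\}\cap U$ is an open neighbourhood of $x$ on which $r^{\cal E}\equiv l$). On such a set the smooth frame of $\cal E$ comes from smooth row reduction of the smooth basis, not from curvature-derived sections. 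Your step (ii) rank argument would be legitimate once constancy of $r^{\cal E}$ on an open set and a smooth local basis of $\mathfrak{hol}$ are in hand, but as written both the direction of semicontinuity and the existence of the $k$ smooth sections lying in $\mathfrak{g}_1$ are unproven, so the proposal does not establish the theorem.
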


\bprf
After restricting $U$ if necessary, we may assume that $U$ is contained in a coordinate neighborhood for $M$. It is then possible to choose a local basis of $\mathfrak{hol}_x(M,c)$ over $U$ which depends {smoothly}  on $x$. Write such a basis as  
\begin{align} U \ni x \mapsto (v^{\flat}_i(x) \wedge s^{\flat}_-  + A^i(x)), \label{localb}
\end{align}
where $i=1,...,m:=\text{dim }\mathfrak{hol}(M,c)$, for certain $v_i \in T_xM$ and $A^i \in \mathfrak{g_0} \oplus \mathfrak{g}_{-1}$. With respect to the fixed coordinates we may think of the $A^i =( A^i_{jk})_{j,k}$ as $\mathfrak{so}(p+1,q+1)$-matrices. Let \[\widetilde{A}^i := (A^i_{11}, A^i_{12}, \ldots , A^i_{n+1, n+2} ,A^i_{n+2, n+2})^\top\] and introduce the $m\times (n+2)^2$-matrix $A:= \begin{pmatrix} \widetilde{A}^1 & ... & \widetilde{A}^m \end{pmatrix}$. Elementary linear algebra reveals that
\begin{align}
r^{\cal E}(x) = k \Leftrightarrow k = \text{dim ker }A = \text{dim }\mathfrak{hol}_x(M,c) - \text{rk } A_x. \label{dimconst}
\end{align}
The set of matrices with rank greater or equal to some fixed integer is open in the set of all matrices. Thus, it follows from the equivalence \eqref{dimconst} that $\{ x \mid r^{\cal E}(x) \leq k \}$ is open in $M$. In particular, $(r^{\cal E})^{-1}(0) = \{x \mid r^{\cal E}(x) \leq 0 \}$ is open and $r^{\cal E} < n$ is an open condition.

Assume now that there is $x \in U$ with $r^{\cal E}(x) = 0$. It follows that $r^{\cal E}=0$ on some open subset $V \subset U$. Thus the claim follows for this case. Otherwise, we have $ r^{\cal E}\geq 1$ everywhere. If there is $x \in U$ with $r^{\cal E}(x) = 1$, it follows that there is an open neighbourhood $V$ in $U$ with $r^{\cal E} \leq 1$ of $x$ in $U$. Thus, $r^{\cal E}=1$ on $V$. Otherwise we have  $r^{\cal E} \geq 2$ on $U$ etc. So the statement regarding the existence of $V$ with $r^{\cal E}|_{V} =: l =  \text{constant}$ follows inductively. The above proof starts with a smooth local basis \eqref{localb} and constructs (on an open subset of $V$) via smooth linear algebra operations a basis on $V$ of the form $(\widetilde{v}^{\flat}_{i=1,...,l} \wedge s_-^{\flat},...)$. It is thus clear that the $\widetilde{v}_i$ depend smoothly on $x \in V$ and yield local sections.

Finally, if every open set in $M$ contains an $\cal E$-adapted open subset, 
 the union of all $\mathcal{E}$-adapted open sets is open and dense in $M$. 
\eprf
By virtue of this theorem, after restricting to an open and dense subset of $M$ if necessary, we may from now on always assume that $M$ is the union of $\mathcal{E}$-adapted open sets. In particular, the level sets of $r^{\cal E}$ are then (possibly empty) unions of $\mathcal{E}$-adapted open sets. From this point of view, we may restrict ourselves to such open sets in the following local analysis. Note that restricting to open and dense subset in context of Cartan holonomy reductions is a basic feature of the curved orbit decomposition as revealed in \cite{cgh}.

\begin{Proposition}
 Let $U \subset M$ be a $\mathcal{E}$-adapted open set. Then $\mathcal{E}_x$ is a totally lightlike subspace of $T_xM$ for every $x \in U$ or $\mathfrak{hol}(M,c)$ is generic. \label{furr}
\end{Proposition}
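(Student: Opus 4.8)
The plan is to show that if $\mathcal{E}_x$ fails to be totally lightlike at some point of an $\mathcal{E}$-adapted open set $U$, then $\mathfrak{hol}(M,c)$ must be generic. All computations will be carried out with respect to a fixed metric $g \in c$, using the matrix description of $\mathfrak{so}(\mathcal{T},h)$ and the formula \eqref{derform} for $\nabla^{nc}$ on adjoint tractors, exactly as in the proof of Proposition \ref{lightlike}. The starting observation is that, on an $\mathcal{E}$-adapted open set $U$, by Theorem \ref{eadapated} there exist smooth, pointwise linearly independent local sections $V_1,\dots,V_k$ of $\mathcal{E}$; equivalently $s_-^{\flat} \wedge V_i^{\flat} \in \mathfrak{hol}_x(M,c)$ for all $x \in U$. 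Suppose that $\mathcal{E}_{x_0}$ is not totally lightlike at some $x_0 \in U$. After taking suitable real linear combinations of the $V_i$ we may then assume that one of them, say $V_1$, satisfies $g(V_1,V_1)(x_0)\neq 0$, and hence, shrinking $U$, that $g(V_1,V_1)\neq 0$ everywhere on $U$.

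The next step is to differentiate. Since $s_-^{\flat}\wedge V_1^{\flat}\in\mathfrak{hol}_x(M,c)$ for all $x$, the infinitesimal-holonomy principle (as used in Proposition \ref{lightlike}) tells us that for any vector field $X$ the derivative $\nabla^{nc}_X(s_-^{\flat}\wedge V_1^{\flat})(x)$ again lies in $\mathfrak{hol}_x(M,c)$; from \eqref{derform} (with $\mu=0$, $A=0$, $a=0$, $Z=V_1$, i.e.\ the purely $\mathfrak{g}_1$ piece paired with $s_-$) one computes that this derivative equals $-g(X,V_1)\,s_-^{\flat}\wedge s_+^{\flat} + X^{\flat}\wedge(\nabla^g_X V_1)^{\flat} + (\text{terms involving }\mathsf{P}^g)$, and more importantly its $\mathfrak{g}_0$-part contains $X^{\flat}\wedge V_1^{\flat}$ after subtracting the $\mathfrak{g}_1$-contribution coming from $\nabla^g_X V_1\in\mathcal{E}$ (which is already known to pair with $s_-$ and lie in the holonomy, hence can be removed). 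Choosing $X=V_1$ produces $g(V_1,V_1)\,s_-^{\flat}\wedge s_+^{\flat}\in\mathfrak{hol}_x$, hence $s_-^{\flat}\wedge s_+^{\flat}\in\mathfrak{hol}_x$; choosing $X$ orthogonal to $V_1$ produces $X^{\flat}\wedge V_1^{\flat}\in\mathfrak{hol}_x$ for all such $X$. Taking Lie brackets of two such elements $X_1^{\flat}\wedge V_1^{\flat}$ and $X_2^{\flat}\wedge V_1^{\flat}$ (with $X_1,X_2,V_1$ mutually orthogonal) and using $g(V_1,V_1)\neq 0$ yields $X_1^{\flat}\wedge X_2^{\flat}\in\mathfrak{hol}_x$; combined with the elements already obtained this shows that the full $\mathfrak{g}_0=\mathfrak{co}(p,q)$-part and all of $\mathfrak{g}_1$ sit inside $\mathfrak{hol}_x$.

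From here the argument closes exactly as in Proposition \ref{lightlike}: once $\mathfrak{g}_0\oplus\mathfrak{g}_1\subset\mathfrak{hol}_x$, one differentiates elements of $\mathfrak{g}_1$ of the form $X^{\flat}\wedge V^{\flat}$ (which now live in the holonomy for \emph{every} $X,V$, not just tangent to a curve) and reads off from \eqref{derform} that the $\mathfrak{g}_{-1}$-components $\{\Phi(0,0,Z)\}$ with $Z\perp$ some reference vector appear, and differentiating $s_-^{\flat}\wedge s_+^{\flat}$ fills in the remaining $\mathfrak{g}_{-1}$ direction; therefore $\mathfrak{hol}_x(M,c)=\mathfrak{g}_{-1}\oplus\mathfrak{g}_0\oplus\mathfrak{g}_1=\mathfrak{so}(p+1,q+1)$ at the point $x_0$, and by the usual holonomy transport this forces genericity everywhere. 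Conversely, if no such point exists, then $\mathcal{E}_x$ is totally lightlike for every $x\in U$, which is the alternative in the statement.

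The main obstacle, and the one point that needs care rather than a verbatim copy of Proposition \ref{lightlike}, is bookkeeping the $\mathsf{P}^g$-dependent and $\nabla^g V_1$-dependent terms in \eqref{derform}: a priori $\nabla^{nc}_X(s_-^{\flat}\wedge V_1^{\flat})$ is not purely $s_-^{\flat}\wedge s_+^{\flat}$ plus $X^{\flat}\wedge V_1^{\flat}$ but also carries a $\mathfrak{g}_1$-term $s_-^{\flat}\wedge(\nabla^g_X V_1)^{\flat}$ and Schouten-tensor corrections. The resolution is that $\nabla^g_X V_1$, being (by the transformation-invariance of $\mathcal{E}$ and smoothness of the $V_i$) expressible within $\mathcal{E}$ only after projection, still yields an element of the form $s_-^{\flat}\wedge(\text{vector})^{\flat}$ whose holonomy membership we already know how to handle, so it can be subtracted off; likewise the $\mathsf{P}^g$-terms all lie in the span of elements of the shape $s_-^{\flat}\wedge Z^{\flat}$ or $s_-^{\flat}\wedge s_+^{\flat}$ already shown to be in $\mathfrak{hol}_x$. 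Once this reduction is made explicit, the $\mathfrak{g}_0$-generation step goes through and the rest is the bracket-closure argument verbatim from Proposition \ref{lightlike}.
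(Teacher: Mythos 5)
There is a genuine gap at the very first step of your generation argument. From \eqref{derform}, the derivative of the $\mathfrak{g}_1$-element is
\[
-\nabla^{nc}_X\bigl(s_-^{\flat}\wedge V_1^{\flat}\bigr)=g(V_1,X)\,s_-^{\flat}\wedge s_+^{\flat}+X^{\flat}\wedge V_1^{\flat}+(\nabla^g_X V_1)^{\flat}\wedge s_-^{\flat},
\]
with no Schouten corrections (those multiply the $\mathfrak{g}_0$- and $\mathfrak{g}_{-1}$-components, which vanish here), so the only problematic term is $(\nabla^g_X V_1)^{\flat}\wedge s_-^{\flat}$. Your proposed resolution --- that this term ``can be subtracted off'' because its holonomy membership ``we already know how to handle'' --- is exactly what is not available: $\nabla^g_X V_1$ need not lie in $\mathcal{E}$, so $s_-^{\flat}\wedge(\nabla^g_X V_1)^{\flat}$ is not known to belong to $\mathfrak{hol}_x(M,c)$. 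In Proposition \ref{lightlike} this subtraction is legitimate only because the hypothesis there is $r^{\mathcal{E}}\equiv n$ along the curve, i.e.\ \emph{all} of $\mathfrak{g}_1$ already sits in the holonomy; in the present proposition $\mathcal{E}$ may be a proper subbundle, and that is the whole point. Consequently your claims that $X=V_1$ gives $s_-^{\flat}\wedge s_+^{\flat}\in\mathfrak{hol}_x$ and that $X\perp V_1$ gives the pure $\mathfrak{g}_0$-element $X^{\flat}\wedge V_1^{\flat}\in\mathfrak{hol}_x$ are unproved, and the subsequent bracket-generation of $\mathfrak{g}_0$ and $\mathfrak{g}_{-1}$ (which also brackets elements you have not isolated inside $\mathfrak{hol}_x$) does not go through.

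The way around this, which is how the paper argues, is not to try to isolate the $\mathfrak{g}_0$-part at all: take $X\perp V_1$ on a neighbourhood $W$ where $g(V_1,V_1)\neq 0$, so the $s_-^{\flat}\wedge s_+^{\flat}$ term is absent, and bracket the \emph{whole} derivative with the element $s_-^{\flat}\wedge V_1^{\flat}$, which is already known to lie in $\mathfrak{hol}$. Since $[\mathfrak{g}_1,\mathfrak{g}_1]=0$, the unknown term $(\nabla^g_X V_1)^{\flat}\wedge s_-^{\flat}$ dies in the bracket, and one is left with
\[
\bigl[\,X^{\flat}\wedge V_1^{\flat}+(\nabla^g_X V_1)^{\flat}\wedge s_-^{\flat}\,,\;s_-^{\flat}\wedge V_1^{\flat}\,\bigr]=-g(V_1,V_1)\,X^{\flat}\wedge s_-^{\flat}\in\mathfrak{hol}(M,c),
\]
so every $X\perp V_1$ lies in $\mathcal{E}$ on $W$; together with $V_1\in\mathcal{E}$ this gives $r^{\mathcal{E}}|_W=n$. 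Genericity of $\mathfrak{hol}(M,c)$ then follows by applying Proposition \ref{lightlike} to a non-null curve inside $W$, rather than by redoing its grading argument from scratch as you attempt. With this repair your overall structure (non-lightlike section $\Rightarrow$ genericity) becomes the paper's proof; as written, the argument does not close.
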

\bprf
Let $V$ be a vector field defined on $U$ such that $s_-^{\flat} \wedge V^{\flat}(x) \in \mathfrak{hol}_x(M,c)$ for $x \in U$. Differentiating in direction of some $X \in TM$ using \eqref{derform} reveals  that
\begin{align} -\nabla^{nc}_X(s_-^{\flat} \wedge V^{\flat})(x)= g(V,X) s_-^{\flat} \wedge s_+^{\flat} + X^{\flat} \wedge V^{\flat}  + (\nabla_X V)^{\flat} \wedge s_-^{\flat} \in \mathfrak{hol}_{x}(M,c). \label{hole2}
\end{align}
Suppose that there is $x \in U$ with $g(V,V)(x) \neq 0$. It follows that $g(V,V) \neq 0$ on some open neighborhood $x \in W \subset U$. Let $X$ be orthogonal to $V$ on $W$. As $\mathfrak{hol}_{x}(M,c)$ is a Lie algebra with the usual commutator Lie bracket, it follows that on $W$ also
\begin{align}
\left[  X^{\flat} \wedge V^{\flat}  + (\nabla_X V)^{\flat} \wedge s_-^{\flat},s_-^{\flat} \wedge V  \right] = -g(V,V) X^{\flat} \wedge s_-^{\flat} \in \mathfrak{hol}(M,c) 
\end{align}
Thus, $r^\cal E|_{W} = n$ and the statement follows from Proposition \ref{lightlike}.
\eprf
%
%
%
%

\subsection{Rank and integrability of the holonomy distribution}
Interestingly, it turns out that, at least locally, $\mathcal{E}$ always integrable or it is maximally non integrable and one of the exceptional holonomy reductions occurs. More precisely, we will see that if $\cal E$ is not integrable, $M$ is of dimension $5$ or $6$, $\cal E$ is generic and of rank $2$ or $3$, respectively, and  $\mathfrak{hol}(M,c)$ is $\mathfrak{g}_2$ or $\mathfrak{spin}(4,3)$, respectively.\

In order to analyse the integrability of $\cal E$, we need some preparations.
\begin{Proposition} \label{openorbit}
Let $(M^n,c)$ be a conformal manifold of even dimension. Either there is an open dense subset of $M$ on which  $r^\cal E\le 1$  or $\Hol^0(M,c)$ acts on the lightcone $\mathcal{N} \subset \mathbb{R}^{p+1,q+1}$ with an open orbit.
\end{Proposition}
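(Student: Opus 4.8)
The plan is to show that if the dense set where $r^{\cal E}\le 1$ is \emph{not} available, then $r^{\cal E}\ge 2$ on some open set $U$, and to deduce from this that $\Hol^0$ must act with an open orbit on the lightcone $\cal N$. By \Theo{eadapated} we may assume $U$ is an $\cal E$-adapted open set, so that over $U$ there are smooth, pointwise linearly independent sections $V_1,\dots,V_k$ of $\cal E$ with $k=r^{\cal E}\ge 2$, meaning $s_-^{\flat}\wedge V_i^{\flat}(x)\in\mathfrak{hol}_x(M,c)$ for all $x\in U$. By \Prop{furr}, since we may assume $\mathfrak{hol}(M,c)$ is not generic (otherwise $r^{\cal E}\equiv n\ge 2$ and $\Hol^0$ certainly acts transitively, hence with open orbit, on $\cal N$), each $\cal E_x$ is a totally lightlike subspace of $T_xM$; in particular $g(V_i,V_j)=0$ on $U$ for all $i,j$.

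The key computation is to differentiate these holonomy elements and extract enough of $\mathfrak{hol}_x(M,c)$ to see that the connected holonomy group moves a lightlike line in $\mathbb{R}^{p+1,q+1}$ around an open set. Concretely, I would apply $\nabla^{nc}_X$ to $s_-^{\flat}\wedge V_i^{\flat}$ using \eqref{derform}, exactly as in the proof of \Prop{furr}, to get
\[
g(V_i,X)\, s_-^{\flat}\wedge s_+^{\flat} + X^{\flat}\wedge V_i^{\flat} + (\nabla_X V_i)^{\flat}\wedge s_-^{\flat}\ \in\ \mathfrak{hol}_x(M,c).
\]
Then, using that $k\ge 2$, I take $X=V_1$ and look at the $i=2$ element: since $g(V_1,V_2)=0$, the first term drops and we get $V_1^{\flat}\wedge V_2^{\flat} + (\nabla_{V_1}V_2)^{\flat}\wedge s_-^{\flat}\in\mathfrak{hol}_x(M,c)$. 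Bracketing this with $s_-^{\flat}\wedge V_1^{\flat}$ (noting $g(V_1,V_1)=0$, $g(V_1,V_2)=0$) produces a further element, and iterating such brackets and derivatives with the mutually orthogonal, mutually lightlike vectors $V_1,V_2$ should generate inside $\mathfrak{hol}_x(M,c)$ a subalgebra whose action on $\mathbb{R}^{p+1,q+1}$ has an open orbit on $\cal N$ through the line $\mathcal I_x$. The natural target is the parabolic-type subalgebra spanned by $s_-^{\flat}\wedge s_+^{\flat}$, the $s_-^{\flat}\wedge V_i^{\flat}$, the $V_1^{\flat}\wedge V_2^{\flat}$ and $V_i^{\flat}\wedge s_-^{\flat}$-type terms together with a $2$-dimensional null block of $\mathfrak g_{-1}$; a subgroup containing $\exp$ of a $2$-dimensional null subspace of $\mathfrak g_{-1}$ already has $2$-dimensional orbits on $\cal N$ transverse to the radial/$P$-directions, and combined with the grading-zero dilations and the $\mathfrak g_1$ elements one reaches an orbit of dimension $\ge\dim\cal N$ locally, i.e.\ an open orbit. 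Since the orbit structure of $\Hol^0(M,c)$ on $\cal N$ is independent of the base point (holonomy groups at different points are conjugate in $\OO(p+1,q+1)$, and $\cal N$ is a homogeneous space), having an open orbit at one point of $U$ gives the conclusion globally.

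The main obstacle I anticipate is the bookkeeping in the second step: showing that the finitely many derivatives and Lie brackets of the $s_-^{\flat}\wedge V_i^{\flat}$ really do generate a subalgebra with an open orbit on $\cal N$, rather than getting stuck inside $\mathfrak p$ or inside a proper null-stabilising subalgebra. The subtlety is that a priori $(\nabla_X V_i)^{\flat}\wedge s_-^{\flat}$ is again a $\mathfrak g_1$-term, so the first derivative only yields $X^{\flat}\wedge V_i^{\flat}$ modulo $\mathfrak g_1$; one needs to combine two different $V_i$ and use $k\ge 2$ to get a genuinely new direction (a $\mathfrak g_0$- or eventually $\mathfrak g_{-1}$-component) out, and then argue that a second round of differentiation along a non-null direction — which exists because $\cal E_x$ is only $k$-dimensional totally lightlike with $k<n$, so $\cal E_x^{\perp}\not\subset\cal E_x$ fails only in the degenerate split-signature extreme — brings in a $\mathfrak g_{-1}$ piece. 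Care is needed exactly when $(p,q)$ is such that totally lightlike $k$-planes with $k\ge 2$ force $2\le k\le\min(p,q)$, and one should check the borderline cases $k=\min(p,q)$ separately to confirm that even there the accumulated infinitesimal holonomy acts with an open orbit on the lightcone. Once that linear-algebra core is in place, the rest (base-point independence, passing to an open dense set) is routine.
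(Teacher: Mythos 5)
Your setup (restrict to an $\mathcal{E}$-adapted open set where $r^{\cal E}\ge 2$, invoke Proposition \ref{furr} to make $\mathcal{E}$ totally lightlike, differentiate the elements $s_-^{\flat}\wedge V_i^{\flat}$ with \eqref{derform}) matches the paper, but the core of your argument — that the elements you exhibit generate a subalgebra with an open orbit on $\mathcal{N}$ — does not work, and the gap is exactly at the step you flagged as "bookkeeping". Your "natural target" algebra, spanned by $s_-^{\flat}\wedge s_+^{\flat}$, the $s_-^{\flat}\wedge V_i^{\flat}$, $V_1^{\flat}\wedge V_2^{\flat}$ and a $2$-dimensional null block $V_i^{\flat}\wedge s_+^{\flat}$ of $\fg_{-1}$, cannot act with an open orbit: every one of these endomorphisms has image inside the fixed $(k+2)$-dimensional subspace spanned by $s_-,s_+,V_1,\dots,V_k$, so every orbit of this algebra has dimension at most $k+2\le \min(p,q)+2<n+1=\dim\mathcal{N}$. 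Moreover $\fg_1$ annihilates $s_-$, so it contributes nothing to the orbit through $s_-$ (your count "$2$-dimensional null block plus dilations plus $\fg_1$ reaches $\dim\mathcal{N}$" is off by roughly $n-2$ dimensions), and the specific bracket you propose, $\bigl[\,V_1^{\flat}\wedge V_2^{\flat}+(\nabla_{V_1}V_2)^{\flat}\wedge s_-^{\flat}\,,\ s_-^{\flat}\wedge V_1^{\flat}\,\bigr]$, vanishes identically because all the inner products $h(V_i,V_j)$, $h(V_i,s_-)$, $h(s_-,s_-)$ occurring in it are zero. So the sketch gets stuck with a too-small piece of $\fg_{-1}$, which is precisely the failure mode you worried about.

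What is missing is the mechanism that produces \emph{all} of $\fg_{-1}$ modulo $\mathfrak{p}$, and it uses the dual null direction, not a non-null one. Take a local section $V$ of $\mathcal{E}$ and a lightlike $V'$ with $g(V,V')=1$. Differentiating $s_-^{\flat}\wedge V^{\flat}$ along $V'$ gives, by \eqref{derform}, an element of $\fhol(M,c)$ of the form $s_-^{\flat}\wedge s_+^{\flat}+A_1$ with $A_1\in\fg_0\oplus\fg_1$ (the point being that the trace part $-g(V,V')=-1$ is nonzero); differentiating this once more along any $X\in(V,V')^{\perp}$ produces $-X^{\flat}\wedge s_+^{\flat}+A_2$ with $A_2\in\mathfrak{p}$, i.e.\ an arbitrary $\fg_{-1}$-direction from the $(n-2)$-dimensional space $(V,V')^{\perp}$, modulo $\mathfrak{p}$. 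Since $r^{\cal E}\ge 2$ and $\mathcal{E}$ is totally lightlike, one can run this with two independent sections $V_1,V_2$ of $\mathcal{E}$ and suitably chosen $V_1',V_2'$; then $(V_1,V_1')^{\perp}+(V_2,V_2')^{\perp}=T_xM$, so the $\fg_{-1}$-projection of $\fhol_x(M,c)$ is all of $\fg_{-1}$, i.e.\ $\fso(p+1,q+1)=\fhol_x(M,c)+\mathfrak{p}$. Combined with the element $s_-^{\flat}\wedge s_+^{\flat}+A_1$ (which moves $s_-$ radially), this shows the $\Hol^0(M,c)$-orbit through $s_-\in\mathcal{N}$ has dimension $n+1$, hence is open; no explicit subalgebra with an open orbit needs to be generated. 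Finally, for the dichotomy itself you should also note (as the paper does, via the proof of Theorem \ref{eadapated}) that $\{r^{\cal E}\le 1\}$ is open, so that when no open set with $r^{\cal E}\ge 2$ exists this set is open \emph{and} dense.
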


\bprf
Suppose first that $r^{\cal E} \geq 2$ on some open set $U \subset M$. After restricting to an open, dense subset of $U$, if necessary, we may assume that ${U}$ is an $\mathcal{E}$-adapted open set. We may also assume that the holonomy is not generic and hence that $\cal E$ is lightlike. Let $V$ be a local section of $\mathcal{E}$ and let $V'$ be a lightlike vector field with $g(V,V') = 1$. Moreover, let $X \in (V,V')^{\perp}$. We have on $U$
\begin{align}
\nabla^{nc}_{V'} (s^{\flat}_- \wedge V^{\flat})  &= s_-^{\flat} \wedge s_+^{\flat} + A_1\  \in\  \mathfrak{hol}(M,c),\label{oo1} \\
\nabla^{nc}_X \left(\nabla^{nc}_{V'} (s^{\flat}_- \wedge V^{\flat}) \right) &= - X^{\flat} \wedge s_+^{\flat} + A_2\  \in\  \mathfrak{hol}(M,c)
\label{oo2}
\end{align}
where $A_{1,2} \in \mathfrak{g}_0 \oplus \mathfrak{g}_1=\mathfrak{p}$. As $r^{\cal E} \geq 2$ on $U$ and $\mathcal{E}$ is totally lightlike, linear algebra shows that  at $x \in U$, equation (\ref{oo2}) implies
\begin{align}
\mathfrak{so}(p+1,q+1) = \mathfrak{hol}_x(M,c) + \mathfrak{p}.
\end{align}
This, together with equation \eqref{oo1} shows  that the orbit of $\Hol^0(M,c)$ through $s_-\in \cal N$ has dimension $n+1$, i.e., it is open.
Otherwise, the subset of $M$ on which $ r^{\cal E} \leq 1$ is dense. It is also open as follows from the proof of Theorem \ref{eadapated}.  
\eprf
In relation to this proposition, we point out that conformal structures for which the holonomy group acts not only with an open orbit on $\cal N$, but transitively and irreducibly on the homogeneous model were classified in \cite{Alt12}.

\begin{Proposition} \label{ncform}
Suppose that $(M,[g])$ admits a nc-Killing form $\alpha \in \Omega^k(M)$. Then $V^{\flat} \wedge \alpha = 0$ for every $V \in \mathcal{E}$. 
\end{Proposition}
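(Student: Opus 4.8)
The plan is to translate the hypothesis ``$(M,[g])$ admits an nc-Killing $k$-form $\alpha$'' into the statement that the associated tractor $(k{+}1)$-form $\widehat\alpha$ of \eqref{splitncform} is $\nabla^{nc}$-parallel, and hence $\Hol(M,c)$-invariant. Fix $g\in c$ and a point $x\in M$, and let $V\in\mathcal{E}_x$, so that $A:=s_-^\flat\wedge V^\flat\in\mathfrak{hol}_x(M,c)\cap\mathfrak{g}_1$. Since $\widehat\alpha$ is holonomy-invariant, we have $A\cdot\widehat\alpha_x=0$, where the dot denotes the natural action of $\mathfrak{so}(\mathcal{T},h)$ on $\Lambda^{k+1}\mathcal{T}^*$ (i.e.\ the derivation extending the action on $\mathcal{T}^*$). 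The strategy is to compute $A\cdot\widehat\alpha$ explicitly using the splitting \eqref{splitncform} and to read off the component that yields $V^\flat\wedge\alpha=0$.

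Concretely, I would use the description of $\mathfrak{g}_1$ from the paragraph around \eqref{brackets}: an element of $\mathfrak{g}_1$ is $\Phi(\mu,0,0)$ acting on $\mathcal{T}\cong\underline{\mathbb R}\oplus TM\oplus\underline{\mathbb R}$ by the matrix $\begin{pmatrix}0&\mu&0\\0&0&-\mu^\sharp\\0&0&0\end{pmatrix}$, with $\mu=V^\flat$ in our case; dually it acts on $\mathcal{T}^*$, hence as a derivation on $\Lambda^{k+1}\mathcal{T}^*$. Plugging $\widehat\alpha = s_+^\flat\wedge\alpha + \alpha_0 + s_-^\flat\wedge s_+^\flat\wedge\alpha_\pm + s_-^\flat\wedge\alpha_-$ into $A\cdot\widehat\alpha=0$ and sorting by the filtration degree (i.e.\ by how many factors of $s_+^\flat$ resp.\ $s_-^\flat$ appear), the term of lowest filtration weight — the one living in $\Lambda^{\bullet}T^*M$ with no $s_\pm^\flat$ factor — comes precisely from $A$ acting on $s_+^\flat\wedge\alpha$: the action sends $s_+^\flat$ to (a multiple of) $V^\flat$ while killing the $\alpha$ part up to terms of higher filtration weight, producing $V^\flat\wedge\alpha$. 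Setting this component to zero gives $V^\flat\wedge\alpha=0$, which is the claim; since $x$ and $V\in\mathcal{E}_x$ were arbitrary, this holds for all $V\in\mathcal{E}$.

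The bookkeeping of the $\mathfrak{g}_1$-action on $\Lambda^{k+1}\mathcal{T}^*$ is the main thing to get right: one must identify $\mathfrak{so}(\mathcal{T},h)\cong\Lambda^2\mathcal{T}^*$ correctly (the paper fixes the musical conventions, and $s_-^\flat\wedge V^\flat$ is the adjoint-tractor incarnation of $\Phi(V^\flat,0,0)$ with the roles of $s_\pm$ matching \eqref{identific}), and then check that the only contribution of filtration degree $0$ to $A\cdot\widehat\alpha$ is $\pm V^\flat\wedge\alpha$. A clean way to organize this: work in the $g$-splitting, note $s_-$ spans $\mathcal{I}$ so $\mathfrak{g}_1=\mathcal{I}^\flat\wedge\mathcal{T}^*/(\dots)$ acts by lowering filtration degree by two on $\mathcal{T}$ and by two on $\Lambda^\bullet\mathcal{T}^*$ in the appropriate sense, so it maps the top piece $s_+^\flat\wedge\alpha$ into $\Lambda^kT^*M$ and everything else into strictly deeper filtration pieces. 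The expected obstacle is purely one of signs and conventions — verifying that the surviving term is genuinely $V^\flat\wedge\alpha$ (and nonzero when $\alpha\ne0$) rather than, say, a contraction $\iota_V\alpha$; this is settled by comparing with the explicit matrix form of $\Phi(\mu,0,0)$ and the wedge conventions for $\widehat\alpha$ in \eqref{splitncform}. No deep input beyond ``parallel $\Rightarrow$ holonomy-invariant'' and the already-recalled tractor formalism is needed.
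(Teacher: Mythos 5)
Your argument is correct and is exactly the paper's proof: the nc-Killing form corresponds to a $\nabla^{nc}$-parallel tractor $(k{+}1)$-form $\widehat\alpha$, which is annihilated by $\mathfrak{hol}_x(M,c)$, and inserting $s_-^\flat\wedge V^\flat\in\mathfrak{g}_1$ into the splitting \eqref{splitncform} leaves $\pm V^\flat\wedge\alpha$ as the only component free of $s_\pm^\flat$ factors, forcing $V^\flat\wedge\alpha=0$. Your filtration bookkeeping just spells out what the paper dismisses with ``one immediately obtains.''
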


\bprf
Following the discussion in Section \ref{bspsec}, every nc-Killing $k$-form $\alpha$ uniquely determines a parallel tractor $(k+1)$-form $\widehat{\alpha}$. With respect to  a metric $g$ in the conformal class, decompose $\widehat{\alpha}$ as in \eqref{splitncform}. Pointwise, $\widehat{\alpha}$ is  annihilated by the action $\#$ of $\mathfrak{hol}(M,c)$ on forms. In particular, one has for every $V \in \mathcal{E}_x$ that
\[ (s_-^{\flat} \wedge V^{\flat}) \# \widehat{\alpha}_x = 0 \]
Inserting \eqref{splitncform}, one immediately obtains that $V^{\flat} \wedge \alpha$ = 0.
\eprf

\begin{Proposition} \label{splitnd}
Suppose $M$ is orientable and the action of $\fhol(M,c)$ leaves invariant a nontrivial non degenerate subspace of $\mathbb{R}^{p+1,q+1}$. Then $\mathcal{E} = 0$ on an open, dense subset of $M$. 
\end{Proposition}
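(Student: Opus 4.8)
The plan is to exploit the structure of a holonomy-invariant nondegenerate subspace $H\subset\mathbb{R}^{p+1,q+1}$, together with its orthogonal complement $H^\perp$, which is also nondegenerate and $\fhol(M,c)$-invariant. Since $\fhol_x(M,c)\subset\fso(\mathcal{T}_x,h_x)$ preserves the corresponding parallel decomposition $\mathcal{T}=\mathcal{H}\oplus\mathcal{H}^\perp$ of the tractor bundle into nondegenerate subbundles, any $A\in\fhol_x(M,c)$ must satisfy $A(\mathcal{H}_x)\subset\mathcal{H}_x$ and $A(\mathcal{H}^\perp_x)\subset\mathcal{H}^\perp_x$. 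The key point is to see what this forces on an element of the form $s_-^\flat\wedge X^\flat\in\fg_1$. First I would note that this decomposition corresponds, relative to a chosen $g\in c$ and the splitting \eqref{idef}, to a decomposition of the orthogonal geometry of $\mathbb{R}^{p+1,q+1}$, and that the distinguished lightlike line $\mathcal{I}=\mathbb{R}s_-$ sits in $\mathcal{T}$ in a way that interacts with $\mathcal{H}$ and $\mathcal{H}^\perp$. The generic situation, valid on an open dense set, is that $s_-$ has nonzero component in both $\mathcal{H}$ and $\mathcal{H}^\perp$; the exceptional set where $s_-$ lies entirely in one summand will need separate (easy) treatment.

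The heart of the argument is a computation in $\fso(p+1,q+1)$: suppose $A=s_-^\flat\wedge X^\flat$ for some $X\in T_xM$ (identified with $\fg_1$), and suppose $A$ preserves both $\mathcal{H}_x$ and $\mathcal{H}^\perp_x$. Writing $s_-=u+u'$ and $X=w+w'$ (in the metric splitting) with $u,w\in\mathcal{H}_x$, $u',w'\in\mathcal{H}^\perp_x$, one has $A(v)=h(X,v)s_- - h(s_-,v)X$. Applying $A$ to $v\in\mathcal{H}_x$ and demanding the result lie in $\mathcal{H}_x$, and similarly for $v\in\mathcal{H}^\perp_x$, forces $u'$ and $w'$ to be proportional, and $u$ and $w$ to be proportional — in fact it forces $X\in\mathbb{R}s_-$ (since $A$ is then a multiple of $s_-^\flat\wedge s_-^\flat=0$), unless $s_-$ lies in one of the summands. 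Hence at a generic point the only $\fg_1$-element in $\fhol_x(M,c)$ is zero, so $\mathcal{E}_x=0$ there.

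The remaining task is to promote "at a generic point" to "on an open dense set." For this I would use Theorem \ref{eadapated}: $M$ is covered up to an open dense set by $\mathcal{E}$-adapted open sets, on which $r^\mathcal{E}$ is constant. On such an adapted set $U$, either $r^\mathcal{E}\equiv 0$, in which case we are done on $U$, or $r^\mathcal{E}\geq 1$ and there is a nonvanishing smooth section $V$ of $\mathcal{E}$ over $U$; then $s_-^\flat\wedge V^\flat(x)\in\fhol_x(M,c)$ for every $x\in U$, and the pointwise argument above, applied wherever $s_-$ is not contained in $\mathcal{H}_x$ or $\mathcal{H}^\perp_x$, yields a contradiction on a dense subset of $U$ — hence on all of $U$ by the constancy of $r^\mathcal{E}$. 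Since the union of $\mathcal{E}$-adapted open sets is open and dense and on each of them $\mathcal{E}=0$, the claim follows. The orientability hypothesis is used to ensure that the invariance of a nondegenerate subspace (possibly only as an unoriented subspace, or up to the issue of the invariant subspace vs. its image under the full holonomy group) passes to a genuine $\fhol$-invariant splitting; I would invoke it exactly where \cite{cgh} or the cited references on special Einstein products do.

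The main obstacle I anticipate is the bookkeeping in the linear-algebra step: carefully identifying how the tractor decomposition $\mathcal{T}=\mathcal{H}\oplus\mathcal{H}^\perp$ interacts with the filtration $\mathcal{I}\subset\mathcal{I}^\perp\subset\mathcal{T}$ and with the $|1|$-grading induced by a choice of $g$, and then extracting cleanly that an element of $\fg_1\cap\fso(\mathcal{H})\cap\fso(\mathcal{H}^\perp)$ must vanish off the exceptional locus. One must be a little careful that $\mathcal{H}$ need not be compatible with the grading (it is only parallel, not adapted to any metric in $c$), so the decomposition of $s_-$ into $\mathcal{H}$- and $\mathcal{H}^\perp$-parts genuinely varies with the point; but this variation is smooth, so the set where $s_-\in\mathcal{H}_x\cup\mathcal{H}^\perp_x$ is closed and, being a proper closed subset on each adapted open set (by the case analysis), has empty interior there, which is all that is needed.
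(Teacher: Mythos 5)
Your route is genuinely different from the paper's. The paper does not touch the splitting $\mathcal{T}=\mathcal{H}\oplus\mathcal{H}^\perp$ directly: it uses orientability to produce a decomposable parallel tractor form, projects it to a nc-Killing form $\alpha=t_1\wedge\dots\wedge t_k$ cutting out a nondegenerate $k$-plane $H\subset TM$ on an open dense set, invokes Proposition \ref{ncform} to get $\mathcal{E}\subset H$, and plays this off against the total lightlikeness of $\mathcal{E}$ (Proposition \ref{furr}, via Proposition \ref{lightlike}). Your stabiliser computation in $\fso(p+1,q+1)$ is correct and is in some ways cleaner: writing $s_-=u+u'$, $X=w+w'$, invariance of $\mathcal{H}_x$ forces $w'=\lambda u'$ and $w=\lambda u$ whenever both $u\neq 0$ and $u'\neq 0$, hence $X=\lambda s_-$, and since $h(X,s_+)=0$ while $h(s_-,s_+)=1$ this gives $X=0$; so $\mathcal{E}_x=0$ at every point where $\mathcal{I}_x\not\subset\mathcal{H}_x\cup\mathcal{H}_x^\perp$. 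Note that with this route orientability is not really needed, whereas the paper uses it to get the decomposable parallel form.

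However, there is a genuine gap in your treatment of the exceptional locus, and it cannot be waved away: at a point with $\mathcal{I}_x\subset\mathcal{H}_x$ (say), any $s_-^\flat\wedge X^\flat$ with $X\in\mathcal{H}_x\cap T_xM$ \emph{does} preserve the splitting, so the pointwise argument gives nothing there, and your justification that the bad set has empty interior --- ``being a proper closed subset \dots has empty interior'' --- is not valid reasoning (a proper closed subset of an open set can have interior), nor does the ``case analysis'' establish properness on every adapted set. What is actually needed is the parallelism of $\mathcal{H}$: suppose $\mathcal{I}\subset\mathcal{H}$ on an open set $U$. Since $s_-$ is a section of $\mathcal{H}|_U$ and $\mathcal{H}$ is parallel, formula \eqref{coder} gives $\nabla^{nc}_X s_-=(0,X,0)^\top\in\Gamma(\mathcal{H}|_U)$ for all $X$, so the whole middle slot $TM$ lies in $\mathcal{H}$ over $U$; differentiating once more, $\nabla^{nc}_X(0,Y,0)^\top$ has $s_+$-component $-g(X,Y)$, so also $s_+\in\mathcal{H}_x$, i.e.\ $\mathcal{H}_x=\mathcal{T}_x$ on $U$, and by constancy of the rank of a parallel subbundle $\mathcal{H}=\mathcal{T}$ everywhere, contradicting nontriviality; the same applies to $\mathcal{H}^\perp$. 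Hence the two closed sets $\{\mathcal{I}_x\subset\mathcal{H}_x\}$ and $\{\mathcal{I}_x\subset\mathcal{H}^\perp_x\}$ are nowhere dense, $\mathcal{E}_x=0$ on the dense complement, and since $\{r^{\mathcal{E}}=0\}$ is open (proof of Theorem \ref{eadapated}) you get $\mathcal{E}=0$ on an open dense set. With this insertion your proof is complete; without it, it does not close.
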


\bprf
As the holonomy invariant space (of dimension $k+1$) is nondegenerate and $M$ is orientable, there is actually a decomposable parallel tractor form in $\Omega^{k}\cal T^*$. The associated nc-Killing form $\alpha$ is of the form $\alpha = t_1 \wedge...\wedge t_{k}$ defining  a $k-$dimensional nondegenerate subspace $H \subset TM$ on an open, dense subset of $M$ as follows from the discussion in \cite{leitner05},
Thus, Proposition \ref{ncform} implies that $\mathcal{E} \subset H$ on an open dense subset $M'$ of $M$. On the other hand, by Proposition \ref{lightlike}, $\mathcal{E}$ is over $M'$ contained in a totally degenerate subspace. We conclude $\mathcal{E}_{|M'} = 0$. 
\eprf

\begin{Proposition}\label{splitndlight}
Suppose that $\Hol(M,c)$ fixes a totally lightlike (w.r.t.~$h$) subbundle $\mathcal{H} \subset \mathcal{T}$. 
Then there is on an open and dense subset of $M$ at least locally a metric $g\in c$ such that w.r.t.~$g$ 
\begin{align}
\mathcal{H} = \mathbb{R}s_+ \oplus \cal L,  \label{grep}
\end{align} 
with $\cal L\subset TM $ a  $\nabla^g$-parallel distribution containing $\cal E$ and  the  image of 
 $\Ric^g$.
\end{Proposition}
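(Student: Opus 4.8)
The plan is to reduce the statement to the single claim that, on an open dense subset and locally, there is a metric $g\in c$ for which the distinguished tractor $s_+=s_+^g$ determined by $g$ (cf.\ \eqref{idef}) already lies in $\mathcal{H}$. Granting this, the decomposition \eqref{grep} is immediate: since $\mathcal{H}$ is $h$-isotropic, $s_+\in\mathcal{H}$ forces $\mathcal{H}\subset (s_+)^\perp=\mathbb{R}s_+\oplus TM$, and subtracting the $s_+$-component of an arbitrary section of $\mathcal{H}$ shows $\mathcal{H}=\mathbb{R}s_+\oplus\mathcal{L}$ with $\mathcal{L}:=\mathcal{H}\cap TM\subset TM$. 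The remaining properties of $\mathcal{L}$ (parallelism, $\mathrm{Im}(\Ric^g)\subset\mathcal{L}$) and the inclusion $\mathcal{E}\subset\mathcal{L}$ will then follow by direct computation with \eqref{coder}.

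First I would carry out the usual preliminary reductions: pass to an open dense subset on which the ranks of $\mathcal{H}\cap\mathcal{I}$, of $\mathcal{H}\cap\mathcal{I}^\perp$ and of $\mathcal{E}$ are locally constant, in the spirit of Proposition \ref{furr} and Theorem \ref{eadapated}. The key observation is that $\mathcal{H}$ cannot be contained in $\mathcal{I}^\perp$ on a nonempty open set unless $\mathcal{H}=\mathcal{I}$: if $t=\alpha s_-+Y$ is a local section of $\mathcal{H}\subset\mathcal{I}^\perp$, then by \eqref{coder} the $s_+$-component of $\nabla^{nc}_Xt$ equals $-g(X,Y)$, and parallelism of $\mathcal{H}$ forces this to vanish for every $X$, so $Y\equiv 0$ and $\mathcal{H}=\mathcal{I}$. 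Discarding the degenerate (Ricci-flat almost-Einstein) case $\mathcal{H}=\mathcal{I}$, we may therefore assume that $\mathcal{H}$ is transverse to $\mathcal{I}^\perp$; then $\mathcal{H}\cap\mathcal{I}=0$ and $\mathcal{H}\cap\mathcal{I}^\perp$ has corank one in $\mathcal{H}$.

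The crucial step --- and the one I expect to be the main obstacle --- is the existence of the adapted scale, i.e.\ of a local $g\in c$ with $s_+^g\in\Gamma(\mathcal{H})$. This is exactly the correspondence recalled in Section \ref{bspsec}: a $\Hol$-invariant totally lightlike subbundle of rank $k+1$ corresponds, locally and off a singular set, to a metric $g\in c$ carrying a $\nabla^g$-parallel totally lightlike distribution $\mathcal{L}\subset TM$ of rank $k$ with $\mathrm{Im}(\Ric^g)\subset\mathcal{L}$; see \cite{leistner05a,leistner-nurowski12,Lischewski15}. Concretely, one writes a local section of $\mathcal{H}$ transverse to $\mathcal{I}^\perp$ as $(a,Y_0,1)$ in a reference scale and, using the transformation law \eqref{tra}, must produce a conformal factor $\sigma$ moving $s_+$ into $\mathcal{H}$; this amounts to a Hamilton--Jacobi-type (eikonal-type) equation whose solvability is not dictated by dimension counting but by the flatness of $\nabla^{nc}$ along $\mathcal{H}$, via the second Bianchi identity. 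I would either quote this result directly or reproduce the argument of the cited papers; in either case the outcome is a local $g$ with $s_+^g\in\Gamma(\mathcal{H})$ and $\mathcal{L}=\mathcal{H}\cap TM$.

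Finally, with $s_+=s_+^g\in\Gamma(\mathcal{H})$ fixed, the rest is bookkeeping with \eqref{coder}. From $\nabla^{nc}_Xs_+=\mathsf{P}^g(X)\in TM$ and parallelism of $\mathcal{H}$ one gets $\mathrm{Im}(\mathsf{P}^g)\subset\mathcal{H}\cap TM=\mathcal{L}$; since $\mathcal{L}$ is totally lightlike and $\mathsf{P}^g$ is symmetric this forces $\mathsf{P}^g$ to vanish on $\mathcal{L}^\perp\supset\mathcal{L}$, in particular $\scal^g=0$, whence $\mathsf{P}^g=\tfrac{1}{n-2}\Ric^g$ and $\mathrm{Im}(\Ric^g)\subset\mathcal{L}$. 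For a local section $W$ of $\mathcal{L}$, \eqref{coder} gives $\nabla^{nc}_XW=(-\mathsf{P}^g(X,W),\nabla^g_XW,-g(X,W))\in\mathcal{H}$; comparing with $\mathcal{H}=\mathbb{R}s_+\oplus\mathcal{L}$, the vanishing of the $s_-$-component recovers $\mathsf{P}^g(X,W)=0$ while the $TM$-part forces $\nabla^g_XW\in\mathcal{L}$, so $\mathcal{L}$ is $\nabla^g$-parallel. For $\mathcal{E}\subset\mathcal{L}$: an element of $\mathfrak{hol}_x(M,c)\cap\mathfrak{g}_1$ has the form $\Phi(\mu,0,0)$, and from the matrix description $\Phi(\mu,0,0)(s_+)=-\mu^\sharp$; since $\mathfrak{hol}_x(M,c)$ preserves $\mathcal{H}$ and $s_+\in\mathcal{H}$, we get $\mu^\sharp\in\mathcal{H}\cap TM=\mathcal{L}_x$, so under the identification $\mathfrak{g}_1\cong T_xM$ indeed $\mathcal{E}_x\subset\mathcal{L}_x$. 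All statements are, as in the hypotheses, understood locally on an open dense subset of $M$.
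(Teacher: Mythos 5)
Your proposal is correct and follows essentially the same route as the paper: the existence of the adapted scale with its $\nabla^g$-parallel totally lightlike distribution $\mathcal{L}$ containing $\mathrm{Im}(\Ric^g)$ is delegated to the cited results (\cite{leistner05a,leistner-nurowski12,Lischewski15}), exactly as in the paper's proof, and your argument for $\mathcal{E}\subset\mathcal{L}$ --- evaluating a holonomy element of $\mathfrak{g}_1$-type on $s_+\in\mathcal{H}$ --- is precisely the paper's computation $(s_-^{\flat}\wedge V^{\flat})(s_+)=V\in\mathcal{H}$. Your additional bookkeeping with \eqref{coder} and the preliminary reduction (note that the case $\mathcal{H}=\mathcal{I}$ you discard cannot in fact occur, since $\mathcal{I}$ is never $\nabla^{nc}$-parallel) is a harmless supplement rather than a different method.
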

\bprf
The existence of a parallel distribution $\cal L\subset TM$ containing the image of $\Ric^g$ was proven in \cite{Lischewski15}. To see that at each $x\in M$, the fibre $\cal L_x$ contains $\cal E_x$, consider $V\in \cal E_x$ such that $s_-^{\flat} \wedge V^{\flat} \in \fhol_x(M,c)$. Then $(s_-^{\flat} \wedge V^{\flat} )(s_+)=V$ lies in $ \cal H$ which shows that $\cal E\subset \cal L$.
\eprf

These results enable us to prove the main result of this section:
\begin{Theorem} \label{eint}
Let $U \subset M$ be a $\mathcal{E}$-adapted open set. Then  exactly one of the following cases occurs on $U$:
\begin{enumerate}
\item $\mathcal{E}$ is integrable.
\item The dimension of $M$ is $5$ and 
 $\mathcal{E}$ is a generic rank $2$ distribution. Moreover, 
 $\fhol(M,c)=\fg_2$ and hence 
the conformal structure 
$c=c_{\cal E}$ is  defined by the generic distribution $\cal E$.
\item The dimension of $M$ is $6$ and  $\mathcal{E}$ is a generic rank $3$ distribution. Moreover,  
$\fhol(M,c)=\spin(3,4)$ and  
the conformal structure
 $c=c_{\cal E}$ is  defined by the generic distribution $\cal E$.
\end{enumerate}
\end{Theorem}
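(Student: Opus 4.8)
The plan is to fix $g\in c$, work on the $\mathcal{E}$-adapted open set $U$, and first remove the trivial cases. If $\mathfrak{hol}(M,c)=\mathfrak{so}(p+1,q+1)$ then $\mathcal{E}=TM$ is integrable and we are in case (1); otherwise Proposition \ref{furr} gives that $\mathcal{E}|_U$ is a totally lightlike distribution of some constant rank $k$, so $k\le\min(p,q)$, and if $k\le 1$ it is automatically involutive, again case (1). So from now on assume $\mathcal{E}|_U$ is totally lightlike of rank $k\ge 2$; by Proposition \ref{openorbit} the group $\Hol^0(M,c)$ then acts on the null cone $\mathcal{N}\subset\mathbb{R}^{p+1,q+1}$ with an open orbit.

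The computational engine is the bracket formula obtained by differentiating the $\mathfrak{hol}$-sections $s_-^{\flat}\wedge V^{\flat}$. For $V,W\in\Gamma(U,\mathcal{E})$, formula \eqref{derform} (exactly as in the proof of Proposition \ref{furr}) gives $-\nabla^{nc}_W(s_-^{\flat}\wedge V^{\flat})=g(V,W)\,s_-^{\flat}\wedge s_+^{\flat}+W^{\flat}\wedge V^{\flat}+(\nabla^g_W V)^{\flat}\wedge s_-^{\flat}\in\mathfrak{hol}$; since $g(V,W)=0$, antisymmetrising in $V,W$ yields
\[
2\,W^{\flat}\wedge V^{\flat}-s_-^{\flat}\wedge[W,V]^{\flat}\ \in\ \mathfrak{hol}_x .
\]
Here $W^{\flat}\wedge V^{\flat}\in\mathfrak{g}_0$ while $s_-^{\flat}\wedge[W,V]^{\flat}\in\mathfrak{g}_1$ and $\mathfrak{hol}_x\cap\mathfrak{g}_1=\mathcal{E}_x$, so $[W,V]_x\in\mathcal{E}_x$ if and only if $W^{\flat}\wedge V^{\flat}\in\mathfrak{hol}_x$. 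Hence, writing $\mathcal{L}\colon\Lambda^2\mathcal{E}\to TM/\mathcal{E}$ for the (tensorial) Levi bracket, $\mathcal{E}|_U$ is integrable exactly when $\mathcal{L}\equiv 0$.

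Now suppose $\mathcal{E}|_U$ is \emph{not} integrable, so after shrinking $U$ there are, at every point, $V,W\in\mathcal{E}$ with $\xi:=[W,V]\notin\mathcal{E}$, i.e.\ $W^{\flat}\wedge V^{\flat}\notin\mathfrak{hol}$, and $\mathcal{Z}:=2W^{\flat}\wedge V^{\flat}-s_-^{\flat}\wedge\xi^{\flat}\in\mathfrak{hol}$. The heart of the proof is a bootstrap inside $\mathfrak{hol}$: one differentiates $\mathcal{Z}$ along directions $X$ transverse to $\mathcal{E}$, obtaining via \eqref{derform} further elements of $\mathfrak{hol}\cap\mathfrak{p}$ whose $\mathfrak{g}_0$-part contains a term $\xi^{\flat}\wedge X^{\flat}$, and then brackets these with the elements $s_-^{\flat}\wedge U'^{\flat}$, $U'\in\mathcal{E}$, using $[\mathfrak{g}_1,\mathfrak{g}_0]\subset\mathfrak{g}_1$, $[\mathfrak{g}_1,\mathfrak{g}_1]=0$ and the explicit brackets \eqref{brackets}; modulo $\mathcal{E}$ each such commutator is a multiple of $s_-^{\flat}\wedge X^{\flat}$, so it produces new elements of $\mathcal{E}=\mathfrak{hol}\cap\mathfrak{g}_1$ and forces $\mathcal{E}$ to grow. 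Running this systematically, and using the total-lightlikeness bound $k\le\min(p,q)\le\lfloor n/2\rfloor$, one sees that either the iteration exhausts $\mathfrak{so}(p+1,q+1)$, contradicting non-genericity, or the codimension of $\mathcal{E}$ is driven down so far that, together with $k\le\lfloor n/2\rfloor$, only $(n,k)=(5,2)$ and $(n,k)=(6,3)$ survive; in these two cases the same bracket formula shows $\mathcal{E}$ is bracket-generating, i.e.\ a generic rank $2$ resp.\ rank $3$ distribution. To pin down $\mathfrak{hol}$ itself one adds the observation that in the non-integrable case $\mathfrak{hol}$ can have no proper invariant subspace: an invariant nondegenerate subspace would force $\mathcal{E}=0$ by Proposition \ref{splitnd}, an invariant lightlike line would give an almost Einstein scale and again $\mathcal{E}=0$ on an open dense set, an invariant totally lightlike subspace would put $\mathcal{E}$ inside a parallel, hence integrable, distribution (Propositions \ref{ncform} and \ref{splitndlight}) from which, via the bracket formula, $\mathcal{E}$ would itself be integrable, and a general invariant subspace reduces to these via $H\cap H^{\perp}$. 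So $\mathfrak{hol}$ acts irreducibly, and an irreducibly acting proper conformal holonomy algebra of dimension $5$ resp.\ $6$ admitting the open orbit on $\mathcal{N}$ is $\mathfrak{g}_2\subset\mathfrak{so}(3,4)$ resp.\ $\mathfrak{spin}(3,4)\subset\mathfrak{so}(4,4)$. I expect this bootstrap-and-count step to be the main obstacle: because $\mathcal{E}$ is totally lightlike the commutators one forms have a strong tendency to fall back into $\mathcal{E}$, so the transverse directions $X$ and the vectors $U'$ have to be chosen with care, the $\mathfrak{g}_{-1}$-components that inevitably appear must be tracked, and the dimension bookkeeping ruling out $n=3,4$ and $n\ge 7$ is delicate.

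Finally, in the two exceptional cases it remains to identify $c$ with $c_{\mathcal{E}}$. The holonomy reduction provides a parallel tractor $3$-form resp.\ $4$-form, hence by \cite{HammerlSagerschnig11} a distinguished twistor spinor $\varphi$ with $\mathcal{L}_{\varphi}=\mathcal{D}$, where $\mathcal{D}$ is the generic distribution inducing $c$ in the sense of \cite{nurowski04} resp.\ \cite{bryant06}. Since $\mathfrak{hol}_x\cdot\psi_x=0$ for the corresponding parallel spin tractor $\psi$ (equation \eqref{holts}), every $s_-^{\flat}\wedge V^{\flat}\in\mathfrak{hol}_x$ with $V\in\mathcal{E}_x$ annihilates $\varphi$, so $\mathcal{E}\subseteq\mathcal{L}_{\varphi}=\mathcal{D}$; as $\mathcal{E}$ and $\mathcal{D}$ have the same rank ($2$ resp.\ $3$) they coincide, whence $c=c_{\mathcal{D}}=c_{\mathcal{E}}$ on $U$. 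The three cases are mutually exclusive, a generic distribution being by definition non-involutive and the two exceptional cases living in different dimensions.
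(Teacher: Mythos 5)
Your setup (reduce to the totally lightlike case via Proposition \ref{furr}, derive $2\,W^{\flat}\wedge V^{\flat}+[W,V]^{\flat}\wedge s_-^{\flat}\in\mathfrak{hol}$ by antisymmetrising \eqref{hole2}, and finish the exceptional cases with the twistor spinor of \cite{HammerlSagerschnig11} as in Corollary \ref{bryantcor2}) is consistent with the paper, but the two steps that actually constitute the theorem are not proved. First, the claim that non-integrability forces $(n,\rk\mathcal{E})\in\{(5,2),(6,3)\}$ and that $\mathcal{E}$ is then bracket-generating is exactly the part you defer to an unexecuted ``bootstrap'': you assert that commutators with elements $s_-^{\flat}\wedge U'^{\flat}$ ``force $\mathcal{E}$ to grow'' until either $\mathfrak{so}(p+1,q+1)$ is exhausted or only the two exceptional pairs survive, but no such computation is exhibited, and this mechanism is not how the exclusion actually works. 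The paper instead derives from \eqref{commhol} and \eqref{bracket} the two relations \eqref{eq1}--\eqref{eq2}, which say that for every $X\perp(V_1,V_2)$ with $g(X,X)\neq0$ the transverse part of $[V_1,V_2]$ is proportional to $X$; choosing several independent such $X$ (possible precisely when $r^{\mathcal{E}}\geq3,\ n>6$, or $r^{\mathcal{E}}=2,\ n>5$, or $n=4$) forces $[V_1,V_2]\in\mathcal{E}$, i.e.\ integrability, leaving only signatures $(2,3)$ and $(3,3)$; in the $(3,3)$ case the same relations give $[V_i,V_j]=\epsilon_{ijk}f_kV_k'$ mod $\mathcal{E}$ with $f_k\neq0$, hence genericity. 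None of this dimension-by-dimension analysis, which is the heart of the statement, appears in your proposal.

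Second, the identification $\fhol(M,c)=\fg_2$ resp.\ $\spin(3,4)$ is reduced by you to the assertion that ``an irreducibly acting proper conformal holonomy algebra of dimension $5$ resp.\ $6$ admitting the open orbit on $\mathcal{N}$ is $\fg_2$ resp.\ $\spin(3,4)$,'' which is an unproved classification statement (for $\fso(4,4)$ there are other irreducibly acting proper subalgebras, e.g.\ of tensor-product or unitary type, that must be excluded). The paper does this concretely: in the even case it exhibits the $15$ pointwise independent elements \eqref{15elements} of $\fhol\cap\mathfrak{p}$, adds the $6$ dimensions coming from the open orbit of Proposition \ref{openorbit} (note that proposition is stated for even $n$, while you invoke it in dimension $5$ as well), concludes $\dim\fhol\geq21$, and then applies Lemma \ref{2128lemma}; in the odd case it uses the theorem of \cite{CapGoverGrahamHammerl15} identifying $\fhol$ with a Ricci-flat metric holonomy in signature $(4,3)$ together with the Berger list. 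A smaller but real slip: your irreducibility argument claims that $\mathcal{E}$ contained in a parallel (integrable) totally lightlike distribution $\mathcal{L}$ makes $\mathcal{E}$ itself integrable ``via the bracket formula''; a subdistribution of an integrable distribution need not be integrable, and the correct reason in these signatures is that a totally lightlike $\mathcal{L}$ has rank at most $2$ resp.\ $3$, so it must equal $\mathcal{E}$. As it stands, the proposal records the intended conclusions but leaves the decisive computations and the holonomy classification unestablished.
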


\bprf
If $\mathfrak{hol}(M,c)$ is generic the statement is trivial as $\mathcal{E} = TM$ in this case. Thus, we may assume that the holonomy algebra is reduced and by the previous Proposition, $\mathcal{E}_x$ is a totally lightlike subspace of $T_xM$ for $x \in U$. 

Let $V_1, V_2$ be vector fields on $U$ such that $s_-^{\flat} \wedge V^{\flat}_{i=1,2} \in \mathfrak{hol}_x(M,c)$ for $x \in U$. 
It follows that 
\begin{align}
\nabla^{nc}_{V_1} (s_-^{\flat} \wedge V^{\flat}_2) - \nabla^{nc}_{V_2} (s_-^{\flat} \wedge V^{\flat}_1) = - V^{\flat}_1 \wedge V^{\flat}_2 + s_-^{\flat} \wedge ([V_1,V_2])^{\flat} \in \mathfrak{hol}(M,c). \label{commhol}
\end{align}
Moreover, let $X$ be a vector field on $U$ which is orthogonal to $V_i$ for $i=1,2$. It follows from evaluating $[\nabla^{nc}_{X} (s_-^{\flat} \wedge V^{\flat}_1) , \nabla^{nc}_{X} (s_-^{\flat} \wedge V^{\flat}_2)]$ that 
\begin{align}
 2 g(\nabla_X V_1,V_2) X^{\flat} \wedge s_-^{\flat} + g(X,X) V^{\flat}_1 \wedge V^{\flat}_2 \in \mathfrak{hol}(M,c). \label{bracket}
\end{align}
Combining \eqref{commhol} and \eqref{bracket} it follows for $X$ orthogonal to $(V_1,V_2)$ that
\begin{align}
X \cdot g(\nabla_X V_1,V_2) & \in \mathcal{E} \text{ for }g(X,X) = 0, \label{eq1}\\
[V_1,V_2] - \frac{2g(\nabla_X V_1,V_2)}{g(X,X)} \cdot X & \in \mathcal{E} \text{ for }g(X,X) \neq 0. \label{eq2}
\end{align}
Now we distinguish several cases: Obviously the statement is trivial in case $r^{\cal E} \leq  1$. Thus, we may assume that $V_1,V_2$ are linearly independent. Fix a local $g-$pseudo-orthonormal basis $(s_1,....,s_n)$ over $U$ such that 
\begin{align}
\mathcal{E} = \text{span}(V_i := s_{2i-1} + s_{2i} \mid i = 1,...r^{\cal E} ) \label{adbasis}
\end{align}
Moreover, let $V_i' := s_{2i-1} - s_{2i}$ for $i = 1,...e$. That is, $g(V_i,V_j') = 2\delta_{ij}$.

\bigskip

\textit{Case 1:} $r^{\cal E} \geq 3$ and $n > 6$.
In \eqref{eq1} let $X = V_3'$. It follows that $g(\nabla_{s_5} V_1,V_2) = g(\nabla_{s_6} V_1,V_2)$. But then letting $X = s_5,s_6$, \eqref{eq2} can only be true if $[V_1,V_2] - f \cdot V_3' \in \mathcal{E}$ for some function $f$. On the other hand, applying \eqref{eq2} to $X = s_n$ reveals that $[V_1,V_2] - h \cdot s_n \in \mathcal{E}$ for some function $h$. But this can only be true if $f=h = 0$, i.e. $[V_1,V_2] \in \mathcal{E}$. 

\bigskip

\textit{Case 2:} $r^{\cal E}= 2$ and $n > 5$.
In complete analogy to the previous case, we obtain that $[V_1,V_2] - f s_5 \in \mathcal{E}$ for some function $f$ as well as $[V_1,V_2] - h s_6 \in \mathcal{E}$ for some function $h$ from which one has to conclude that $f=h=0$.

\bigskip

\textit{Case 3:} $r^{\cal E}=2$ and $n=4$. 
$M$ is necessarily of signature $(2,2)$. It follows from \eqref{eq1} that for $i,j,k \in \{1,2\}$ we have $g(\nabla_{V_i},V_j,V_k) = 0$. But this implies that $g(\nabla_{V_1} V_2 - \nabla_{V_2} V_1,V_k) = 0$, i.e. $[V_1,V_2] \in \mathcal{E}^{\perp} = \mathcal{E}$.

\bigskip

It remains to show that in signatures $(3,2)$ and $(3,3)$ and $\mathcal{E}$ being of dimension $2$ resp.~$3$ and not integrable, $\mathcal{E}$ is generic. 

First, let us  consider signature $(3,2)$ and assume that $\mathcal{E}$ is not integrable. In particular, $\mathcal{E}$ is of rank $2$ on an open and dense set. One could  proceed with the proof for this case analogously as with the  $(3,3)$ case below.  However, as we are considering a conformal structure in {\em odd} dimension, one of the main results in \cite{CapGoverGrahamHammerl15} yields that $\mathfrak{hol}(M,c)$ is the holonomy algebra of a Ricci flat pseudo-Riemannian manifold of signature $(4,3)$. If the standard action of $\mathfrak{hol}(M,c)$ was reducible, then by Propositions \ref{splitnd} and \ref{splitndlight},
 $\mathcal{E}$  would either be either zero or contained in an integrable  totally lightlike distribution, both contradicting the assumptions in the current case.
Thus, the action of the holonomy algebra is irreducible and from $\mathcal{E} \neq TM$ and the pseudo-Riemannian version of the Berger list it follows that $\mathfrak{hol}(M,c) = \mathfrak{g}_{2}$, where $\mathfrak{g}_{2}$ denotes the non-compact simple Lie algebra of dimension $14$. For this case, however $\mathcal{E}$ is generic.  
This follows from the discussion of $\fg_2$-conformal structures in Section \ref{bspsec} in complete analogy to the proof of Corollary \ref{bryantcor2} in Section \ref{spin43}.

Let us now treat the 6-dimensional case. Fix a local basis $(V_1,V_2,V_3,V_1',V_2',V_3')$ for $TM$ over $U$ as specified in \eqref{adbasis}
such that $g(V_i,V_j') = 2\delta_{ij}$. 
Moreover, without loss of generality, we may  assume that 
\begin{align}
[V_1,V_2] \notin \mathcal{E}. \label{notint}
\end{align}
 From \eqref{eq1} we obtain $g(\nabla_{V_3'}V_1,V_2) = 0$ and \eqref{eq2} applied to $X = V_3 + V_3'$ then yields 
 \begin{align}
 [V_1,V_2] - g(\nabla_{V_3} V_1,V_2) V_3' \in \mathcal{E}. \label{derv1}
 \end{align}
 We conclude from \eqref{notint} that $g(\nabla_{V_3} V_1,V_2) \neq 0$. Moreover, it follows from subtracting $\nabla_{V_2} (s_-^{\flat} \wedge V_1^{\flat}) \in \mathfrak{hol}(M,c)$ from \eqref{commhol} that 
 \begin{align}
 \nabla_{V_2} V_1 + [V_1,V_2] \in \mathcal{E}. \label{ine}
\end{align}
In complete analogy to the derivation of \eqref{derv1} we obtain that $[V_1,V_3] -g(\nabla_{V_2} V_1,V_3) V_2' \in \mathcal{E}$. Inserting \eqref{ine} and then using \eqref{notint} and \eqref{derv1} reveals that the coefficient $g(\nabla_{V_2} V_1,V_3)$ is nonzero. The same argument applies to $[V_2,V_3]$ and we conclude that there are nowhere vanishing functions $f_{k}$ for $k=1,2,3$ such that
\[ [V_i, V_j] = \epsilon_{ijk} f_k V'_k \text{ mod } \cal E. \]
In particular, $[\mathcal{E},\mathcal{E}] = TM$.

It remains to show that in this case we have $\fhol(M,c)=\spin(3,4)$.
Using \eqref{derform}, it is straightforward to compute that the 15 elements, $i,j=1,2,3$, 
\begin{equation}
\label{15elements}
\begin{array}{rl}
s_-^{\flat} \wedge V^{\flat}_i,& \\
\nabla^{nc}_{V'_i} (s_-^{\flat} \wedge V^{\flat}_j),& \\
\nabla^{nc}_{V_i} (s_-^{\flat} \wedge V^{\flat}_j),& i < j 
\end{array}
\end{equation}
are pointwise linearly independent in $\mathfrak{hol}(M,c) \cap \mathfrak{p}$. 
Then Proposition \ref{openorbit} comes into play, which ensures that $\mathfrak{so}(p+1,q+1) = \mathfrak{hol}(M,c) + \mathfrak{p}$ and hence that  dim $\mathfrak{hol}(M,c) \geq 15+6 = 21$, which is the dimension of $\mathfrak{spin}(4,3)$. Then the equality $\mathfrak{hol}(M,c)=\mathfrak{spin}(4,3)$, and with it the last point in the  theorem,  follows from Lemma \ref{2128lemma} below.
\eprf
\begin{Lemma}\label{2128lemma}
Let $\fh\varsubsetneq \fso(4,4)$ be irreducible of dimension at least $21$. Then $\fh=\fspin(3,4)$. 
\end{Lemma}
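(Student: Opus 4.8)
The plan is to classify the proper irreducible subalgebras $\fh\varsubsetneq\fso(4,4)$ of the split real form and check which ones have dimension at least $21$. The underlying Lie algebra $\fso(4,4)$ is $D_4$, which has dimension $28$, so we are looking for proper irreducible subalgebras of codimension at most $7$. The natural reference is the Berger-type classification of irreducibly acting subalgebras; since we work over $\rr$ with a split signature metric, I would invoke the list of irreducible holonomy algebras (or, more primitively, the classification of maximal subalgebras of the classical Lie algebras due to Dynkin, adapted to the pseudo-Euclidean setting) applied to $\fso(4,4)$ acting on $\rr^{4,4}$. An irreducible subalgebra $\fh$ is either semisimple plus possibly a centre, so first I would argue that an irreducible $\fh$ of such large dimension must in fact be semisimple — a reductive irreducible subalgebra with nontrivial centre acts with the centre by a complex or paracomplex structure, forcing $\fh\subset\fu(2,2)$ or $\fh\subset\mathfrak{gl}(4,\rr)$-type algebras, all of which have dimension well below $21$ once one subtracts appropriately, so these are excluded immediately.

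Having reduced to the semisimple irreducible case, the second step is to run through the possible simple (or semisimple) Lie algebras $\fh$ of dimension $\geq 21$ that admit an $8$-dimensional real representation carrying a nondegenerate invariant symmetric bilinear form of signature $(4,4)$, and that act irreducibly. The candidates of dimension between $21$ and $27$ are very few: the relevant one is $\fspin(3,4)$, the split real form of $B_3=\fso(7,\C)$, which has dimension $21$ and whose spin representation is $8$-dimensional, real, and carries an invariant symmetric form of split signature $(4,4)$ — this is exactly the inclusion $\fspin(3,4)\subset\fso(4,4)$ used throughout the paper via triality. One then has to check there is nothing else: $\fsp(8,\rr)$ and $\fsu(2,2)$ preserve a symplectic resp. Hermitian form, not a symmetric one, hence do not embed in $\fso(4,4)$ irreducibly through the standard $8$-dimensional module; $\fso(8,\C)$ viewed as a real algebra has a $16$-dimensional not $8$-dimensional module; products like $\fso(3)\oplus\fso(5)$ or $\fso(2,2)\oplus\fso(2,2)$ acting by tensor product give dimension at most $3+10=13$ or $6+6=12$; $\fsl(3,\rr)$ acting on its adjoint is $8$-dimensional but has dimension $8$; and $\fg_2$-type algebras are $14$-dimensional. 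So once one is in the semisimple irreducible regime, dimension $\geq 21$ on an $8$-dimensional module forces $B_3$, i.e. $\fh=\fspin(3,4)$.

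The key steps in order are therefore: (i) reduce to $\fh$ semisimple by ruling out a nontrivial centre on dimension grounds; (ii) invoke the classification of irreducible subalgebras of $\fso(p,q)$ (Berger's list in the pseudo-Riemannian version, as already cited in the paper's proof of Theorem \ref{eint}) specialised to an $8$-dimensional split module; (iii) inspect that list, eliminating every entry of dimension $<21$, which leaves only $\fspin(3,4)$; (iv) note that $\fspin(3,4)$ does occur as a genuine proper irreducible subalgebra of $\fso(4,4)$ via the spin representation and triality, so the bound $21$ is attained and the classification is nonvacuous. One should also record that $\fspin(3,4)$ is maximal in $\fso(4,4)$, so there is no intermediate subalgebra of dimension strictly between $21$ and $28$, which is what makes the dimension threshold $21$ sharp.

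The main obstacle is organisational rather than deep: it is making sure the appeal to the classification of irreducible pseudo-Riemannian holonomy (or, equivalently, irreducibly acting subalgebras) is airtight in the split signature $(4,4)$, since the real classification has more cases than the complex one and one must be careful about real forms and about which invariant bilinear forms (symmetric, skew, Hermitian, para-Hermitian) each candidate preserves. In practice the cleanest route is to complexify: a proper irreducible real subalgebra $\fh\subset\fso(4,4)$ either stays irreducible over $\C$, in which case $\fh^\C\subset\fso(8,\C)$ is a proper irreducible complex subalgebra and Dynkin's tables give $\fh^\C\in\{\fsp(8,\C) \text{ (wrong invariant form)}, \fso(7,\C), \ldots\}$ with $\fso(7,\C)$ the only one of dimension $\geq 21$; or $\fh$ becomes reducible over $\C$, which happens only when $\fh$ carries a complex or quaternionic structure, landing it in $\fu(2,2)$, $\fsl(2,\HH)$ or similar, all of dimension $\leq 16<21$. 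Either branch forces $\fh=\fspin(3,4)$, completing the proof.
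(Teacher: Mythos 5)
Your proposal is correct and follows essentially the same route as the paper: both arguments use the dichotomy that an irreducible $\fh$ is reductive and either stays irreducible after complexification or is contained in $\fu(2,2)$ (excluded since $\dim\fu(2,2)=16<21$), and then identify $\fso(7,\C)$, i.e.\ $\fh=\fspin(3,4)$, as the only complex-irreducible possibility in $\fso(8,\C)$ of dimension at least $21$. The only cosmetic difference is that the paper does the last step by directly inspecting dimensions of simple complex Lie algebras below $28$ and their $8$-dimensional irreducible representations (ruling out $\fsl_5\C$ and $\fsl_2\C\oplus\fsl_3\C$), rather than citing the Dynkin/Berger classification wholesale, while your appeal to maximality of $\fspin(3,4)$ is not needed.
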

\bprf
Since  $\fh$ acts irreducibly, it is reductive. Then either $\fh$   is  semisimple and the complexified representation $\mathbb{C}\otimes \R^{4,4}$ is irreducible, or 
  $\fh \subset \mathfrak{u}(2,2)$ and $\mathbb{C}\otimes \R^{4,4}$ is not irreducible 
(see for example  \cite[Section 2]{Di-ScalaLeistner11}). The second case however is excluded by the assumption $\dim(\fh)\ge 21$. Hence, we may consider $\fh^\C\subset \fso(8,\C)$ semisimple acting irreducibly on $\C^8$. 
Inspecting the dimensions of simple complex Lie algebras below $28$, it turns out that the only possibilities for $\fh$, apart from $\fso(7,\C)$, are  $\fsl_5\C$ and $\fsl_2\C\oplus \fsl_3\C$.  Then  $\fsl_5\C$ is excluded as it does not have an irreducible representation of dimension $8$. On the other hand, any irreducible representation of $\fsl_2\C\oplus \fsl_3\C$ is  a tensor product of irreducible representations, which is excluded as $\fsl_3\C$ does not have an  irreducible representations of dimension $2$ or $4$.
\eprf


Finally, we want  to derive universal integrability conditions for Weyl and Cotton tensor for conformal manifolds with reduced holonomy.

\begin{Proposition}
Let $(M,c)$ be a conformal manifold with non generic holonomy. Locally, and off a singular set there is a totally degenerate subspace $\mathcal{L} \subset TM$, which is integrable if $(p,q) \notin \{(3,2),(3,3) \}$, such that 
\begin{align}
W(\mathcal{L},\mathcal{L}^{\perp}) & = 0, \label{int1}\\
(n-4)C(\mathcal{L},\mathcal{L}^{\perp}) & = 0. \label{int2}
\end{align}
In even dimensions, one has  $\mathrm{Im}(\mathcal{O}) \subset \mathcal{L}$. In particular, if a conformal manifold in even dimension $ \geq 4$ admits a parallel tractor (of any type) other than the tractor metric, then the conformally invariant system \eqref{int1} - \eqref{int2} either becomes a nontrivial integrability condition on the curvature (and it couples $\mathcal{O}$ to the curvature) or $\mathcal{O} = 0$.
\end{Proposition}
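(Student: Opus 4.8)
The plan is to combine the structural results established earlier in this section with Theorem~\ref{obstrhol} and the transformation behaviour of $\mathfrak{hol}$ under choices of $g$. First I would invoke Proposition~\ref{furr} (together with Theorem~\ref{eadapated}): on an open dense set, off the singular set where $r^{\cal E}$ jumps, the conformal holonomy distribution $\cal E$ is totally lightlike. This $\cal E$ is the candidate for $\mathcal{L}$; by Theorem~\ref{eint} it is integrable unless $(p,q)\in\{(3,2),(3,3)\}$, which accounts for the integrability clause. (In the degenerate case $\cal E=0$ one can instead take any maximal totally lightlike $\nabla^g$-parallel distribution produced by the holonomy reduction, as in Section~\ref{bspsec}; but the cleanest uniform choice is $\mathcal{L}=\cal E$ over an $\cal E$-adapted open set, and I would phrase it that way.)

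The core of the argument is to read off \eqref{int1}--\eqref{int2} from the fact that every $s_-^{\flat}\wedge V^{\flat}$ with $V\in\mathcal{E}_x$ is annihilated by $\mathfrak{hol}_x(M,c)$ acting on $\mathfrak{so}(\mathcal{T},h)$, and that the tractor curvature $\R^{nc}(Y,Z)$ and its first covariant derivative $\nabla^{nc}_W\R^{nc}(Y,Z)$ lie in $\mathfrak{hol}_x(M,c)$. Concretely: for $V\in\mathcal{E}$ and $X\in\mathcal{E}^\perp=\mathcal{L}^\perp$, the commutator $[\R^{nc}(X,Y),\,s_-^{\flat}\wedge V^{\flat}]$ must again lie in $\mathfrak{hol}$, and using the explicit matrix form of $\R^{nc}$ from Section~\ref{sec2}
\[
\R^{nc}(X,Y)=\begin{pmatrix} 0 & C^g(X,Y)^\flat & 0 \\ 0 & W^g(X,Y) & -C^g(X,Y)  \\ 0 & 0& 0 \end{pmatrix}
\]
together with the bracket formulas \eqref{brackets}, one computes that this commutator has a $\mathfrak{g}_{-1}$-component equal (up to sign) to $W^g(X,Y)V$; since $\mathfrak{hol}_x\subset\mathfrak{so}(\mathcal{T},h)$ and the $\mathfrak{g}_{-1}$-part of any element $s_-^\flat\wedge Z^\flat\in\mathcal{E}$-type elements is zero, comparing $\mathfrak{g}_{-1}$-components forces the relevant trace-part of $W^g(X,Y)V$ into $\mathcal{E}$; then pairing with a further lightlike direction (exactly the trick used in the proofs of Propositions~\ref{lightlike} and~\ref{furr}) one extracts $W(\mathcal{L},\mathcal{L}^\perp)=0$, because otherwise $r^{\cal E}$ would be forced up to $n$ by an argument parallel to Proposition~\ref{lightlike}. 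The Cotton identity \eqref{int2} comes out the same way from the relation $\mathrm{tr}_g\nabla^{nc}_{\cdot}\R^{nc}(X,\cdot)=(n-4)C(X;\cdot,\cdot)+B(X)\wedge s_-^\flat\in\mathfrak{hol}(M,c)$ recorded in the proof of Theorem~\ref{obstrhol}: projecting this element to the condition of annihilating $s_-^\flat\wedge V^\flat$ for $V\in\mathcal{E}$ isolates $(n-4)C(\mathcal{L},\mathcal{L}^\perp)$ modulo the Bach-tensor term, which lives in the $s_-^\flat\wedge(\cdot)$ summand and hence does not interfere with the $\mathcal{L}^\perp$-pairing.

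For the even-dimensional statement, Theorem~\ref{obstrhol} gives $s_-^\flat\wedge\mathcal{O}^\flat(X)\in\mathfrak{hol}_x(M,c)$ for all $X$, which by definition of $\mathcal{E}$ means exactly $\mathcal{O}^\flat(X)\in\mathcal{E}_x=\mathcal{L}_x$, i.e. $\mathrm{Im}(\mathcal{O})\subset\mathcal{L}$. The final "in particular" clause is then a logical packaging: if a conformal manifold of even dimension $\ge4$ carries a parallel tractor of any type other than the tractor metric, the holonomy is non-generic (a proper subalgebra), so the previous paragraph applies and yields the totally degenerate $\mathcal{L}$; then either $\mathcal{L}=0$ on the open dense set, which via $\mathrm{Im}(\mathcal{O})\subset\mathcal{L}$ gives $\mathcal{O}=0$ there and hence everywhere by continuity and conformal invariance, or $\mathcal{L}\neq0$ and \eqref{int1}--\eqref{int2} are genuine (conformally invariant, by the conformal invariance of $\mathcal{E}$ established in Section~\ref{holdist}) curvature constraints, with $\mathcal{O}$ coupled to $W,C$ through the containment $\mathrm{Im}(\mathcal{O})\subset\mathcal{L}\subset\ker$ of those contractions.

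The main obstacle I anticipate is bookkeeping rather than conceptual: keeping careful track of which components of the commutators $[\R^{nc},\,s_-^\flat\wedge V^\flat]$ and $[\nabla^{nc}\R^{nc},\,s_-^\flat\wedge V^\flat]$ land in $\mathfrak{g}_{-1}$, $\mathfrak{g}_0$, $\mathfrak{g}_1$, and ensuring that the $\mathfrak{g}_{-1}$- and $\mathfrak{g}_0$-parts one reads off genuinely force the curvature contractions to vanish rather than merely to lie in $\mathcal{E}$ — this is where one must re-run the "$r^{\cal E}$ would be $n$" dichotomy of Propositions~\ref{lightlike} and~\ref{furr}, since a priori one only gets the weaker conclusion that $W(\mathcal{L},\mathcal{L}^\perp)$ takes values in $\mathcal{E}$; closing that gap requires using that $\mathcal{L}^\perp$ contains vectors of nonzero length and differentiating once more, exactly as in those earlier proofs.
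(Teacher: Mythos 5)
Your proposal is correct and follows essentially the same route as the paper's proof: restrict to $\mathcal{E}$-adapted open sets, take $\mathcal{L}=\mathcal{E}$ (totally lightlike by Proposition~\ref{furr}, integrable outside the exceptional signatures by Theorem~\ref{eint}), read \eqref{int1}--\eqref{int2} off the brackets of $\R^{nc}(X,Y)$ and $\mathrm{tr}_g\nabla^{nc}_{\cdot}\R^{nc}(\cdot,X)$ with $s_-^{\flat}\wedge V^{\flat}$, and obtain $\mathrm{Im}(\mathcal{O})\subset\mathcal{L}$ from Theorem~\ref{obstrhol}. The one correction: these brackets land in $\mathfrak{g}_{1}$ (not $\mathfrak{g}_{-1}$) and already give $W(X,Y)V\in\mathcal{E}$ and $(n-4)\,C(X;V,\cdot)^{\sharp}\in\mathcal{E}$ for \emph{all} $X,Y\in TM$ and $V\in\mathcal{E}$, which -- since $\mathcal{E}$ is totally lightlike, so $\mathcal{E}=(\mathcal{E}^{\perp})^{\perp}$, together with the pair symmetry of $W$ for \eqref{int1} -- is precisely \eqref{int1}--\eqref{int2}, so the ``re-run Proposition~\ref{lightlike}'' step you anticipate as the main obstacle is not needed (and is not used in the paper).
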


\bprf
We restrict the local analysis to $\mathcal{E}-$adapted open sets and let $\mathcal{L} = \mathcal{E}$. The conditions \eqref{int1} and \eqref{int2} are easily seen to be an equivalent reformulation of
\begin{align}
\left[\R^{nc}(X,Y),s_-^{\flat} \wedge V^{\flat} \right] & \in \mathfrak{hol}(M,c), \\
\left[\text{tr}_g \nabla{.} \R^{nc}(\cdot,X),s_-^{\flat} \wedge V^{\flat} \right] & \in \mathfrak{hol}(M,c),
\end{align}
where $X, Y \in TM$ and $V \in \mathcal{E}$. The statement follows from the definition of $\mathcal{E}$ and Theorems \ref{obstrhol} and \ref{eint}.
\eprf




\section{Applications to the obstruction tensor} \label{aplob}
Recall that according to Theorem \ref{obstrhol} the image of the obstruction tensor $\cal O$  is contained in the holonomy distribution $\cal E$. In this section we 
 apply the results about $\cal E$ to obtain the results in Corollaries \ref{cor0} and \ref{cor1}. In the following we will always assume we have given a smooth conformal manifold $(M,c)$ of even dimension $n$ and with obstruction tensor $\cal O$. We view $\mathcal{O}$  as a $(1,1)$-tensor by means of some $g \in c$ and define the rank of $\mathcal{O}$ at a point to be the rank of this $(1,1)$-tensor. The holonomy reductions we will  consider now  were described in Section \ref{bspsec}.

\subsection{The obstruction tensor and holonomy reductions} We start with a well-known case of a conformal holonomy reduction, the case of a parallel standard tractor. 
The existence of  parallel standard tractor is equivalent to the existence of an open dense subset in $M$, on which the conformal class contains  local  Einstein metrics.
It is well known 
since   \cite[Proposition 3.5]{fefferman/graham85}, see also \cite[Theorem 4.3]{gope} and \cite{fefferman-graham07}) that the existence of local Einstein metrics in the conformal class forces $\cal O=0$. Our Theorem \ref{obstrhol} provides us with an independent and alternative proof:
\begin{Corollary}\label{einsteincor}
 If locally on an open and dense subset of $M$  there is an Einstein metric $g \in c$, then $\cal O=0$.
 \end{Corollary}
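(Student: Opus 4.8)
The plan is to derive the vanishing of $\mathcal{O}$ from Theorem~\ref{obstrhol} together with the prolongation of the Einstein equation into a parallel standard tractor. Since $\mathcal{O}$ is a smooth, conformally invariant tensor, it suffices to prove $\mathcal{O}=0$ on the open dense subset on which local Einstein metrics exist; the conclusion on all of $M$ then follows by continuity. So I would fix a point $x$ and an Einstein metric $g\in c$ defined on a connected neighbourhood $U$ of $x$, and argue $\mathcal{O}|_U=0$.

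The first step is to exhibit a $\nabla^{nc}$-parallel standard tractor on $U$. Taking the scale of $g$ (the function constant $\equiv 1$ with respect to $g$) and writing the associated standard tractor in the metric splitting \eqref{idef}, formula \eqref{coder} shows that $Z=(\alpha,0,1)^\top$ satisfies $\nabla^{nc}Z=0$ on $U$ precisely because $g$ is Einstein, i.e.\ $\mathsf{P}^g$ is a constant multiple of $g$, with $\alpha$ that constant. In particular $Z$ is annihilated, at every $y\in U$, by the infinitesimal holonomy algebra $\mathfrak{hol}^\prime_y(M,c)$: this algebra is spanned by the tractor curvature and its iterated $\nabla^{nc}$-derivatives at $y$, hence is contained in the holonomy algebra of any connected open neighbourhood of $y$, and a local parallel section is killed by the latter.

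The second step feeds this into Theorem~\ref{obstrhol}, whose proof in fact yields the sharper statement $s_-^{\flat}\wedge\mathcal{O}^{\flat}(X)\in\mathfrak{hol}^\prime_y(M,[g])$ for all $y\in M$ and $X\in T_yM$. Using the fibrewise description of $\mathfrak{so}(\mathcal{T},h)$ recorded after \eqref{brackets}, the $\mathfrak{g}_1$-element $s_-^{\flat}\wedge W^{\flat}$ sends the tractor $(\alpha,0,1)^\top$ to a tractor whose $TM$-component is $W$ (up to an overall sign), precisely because the bottom slot of $Z$ equals $1$. Hence $\bigl(s_-^{\flat}\wedge\mathcal{O}^{\flat}(X)\bigr)(Z_y)=0$ forces $\mathcal{O}^{\flat}(X)=0$ at $y$, for every $X\in T_yM$ and every $y\in U$; that is, $\mathcal{O}|_U=0$. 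As $U$ may be chosen around any point of the dense subset on which $g$ is Einstein, $\mathcal{O}$ vanishes on a dense set and therefore identically.

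The only point that needs care is the distinction between the full conformal holonomy $\Hol_x(M,c)$ --- which involves loops that may leave $U$ --- and the infinitesimal holonomy, which is the object annihilated by a merely local parallel tractor and which is exactly what the proof of Theorem~\ref{obstrhol} provides; everything else is the short linear-algebra computation in a single fibre of $\mathfrak{so}(\mathcal{T},h)$. Alternatively, one could phrase the argument through the inclusion $\mathrm{Im}(\mathcal{O})\subset\mathcal{E}$ from Theorem~\ref{obstrhol} and verify directly that $\mathcal{E}_y=0$ wherever the bottom component of the parallel tractor is nonzero, in the spirit of the example following Theorem~\ref{cotheo}.
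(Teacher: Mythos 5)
Your argument is correct and is essentially the paper's own proof: build the parallel standard tractor determined by the Einstein scale, observe that it is annihilated by the relevant holonomy algebra, and conclude $\mathcal{O}(X)=0$ from Theorem~\ref{obstrhol} by the fibrewise computation (the paper handles locality simply by applying the theorem to $(U,[g])$ and using $\mathfrak{hol}_x(U,[g])$, whereas you invoke the infinitesimal holonomy refinement --- both are fine). Only a cosmetic slip: by \eqref{coder} the parallel tractor is $(\alpha,0,1)^\top$ with $\alpha=-\tfrac{\mathrm{scal}^g}{2n(n-1)}$, i.e.\ $\alpha$ is \emph{minus} the constant with $\mathsf{P}^g=\tfrac{\mathrm{scal}^g}{2n(n-1)}\,g$.
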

 \bprf
 Given an Einstein  metric on $U \subset M$ and splitting the tractor bundle over $U$ w.r.t.~$g$, there is on $U$ a parallel standard tractor $T = -\frac{\text{scal}^g}{2n(n-1)} \cdot s_- + s_+$. In particular, $\mathfrak{hol}_x(U,[g])  T_x = 0$. Theorem~\ref{obstrhol} yields $(s_-^{\flat} \wedge \mathcal{O}(X)^{\flat})(T) = \cal O(X) = 0$ on $U$ for each $X \in TU$ which is equivalent to $\mathcal{O} = 0$ on $U$.
\eprf

A weaker condition than admitting a parallel tractor is the existence of a subspace that is invariant under the conformal holonomy. In this situation Propositions \ref{splitnd} and \ref{splitndlight} imply:
\begin{Corollary}\label{splitnd-cor}
Suppose $M$ is orientable and the action of $\Hol(M,c)$ leaves invariant a nontrivial  subspace   $\cal H$ of $\mathbb{R}^{p+1,q+1}$. Then we have the following alternative (possibly replacing $\cal H$ with $\cal H\cap\cal H^\perp$ if it is degenerate):
\begin{enumerate}
\item If $\cal H$ is non-degenerate, then $\cal O=0$.
\item If $\cal H$ is totally lightlike, 
then, locally on an open dense subset of $M$  there is  a metric $g\in c$
and 
a  $\nabla^g$-parallel distribution
$\cal L\subset TM $  containing  the image of 
 $\Ric^g$ and of $\cal O$.
 \end{enumerate}
\end{Corollary}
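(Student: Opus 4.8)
The plan is to translate the hypothesis into a statement about $\nabla^{nc}$-parallel subbundles of $\cal T$ and then to combine Propositions~\ref{splitnd} and~\ref{splitndlight} with Theorem~\ref{obstrhol}, which guarantees $\mathrm{Im}(\cal O)\subset\cal E$.

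The first step is the reduction to the two clean cases, and it is the only point that needs a genuine (if short) argument. If $\cal H\subset\rr^{p+1,q+1}$ is non-degenerate, nothing needs to be done: the hypothesis already has the form required by Proposition~\ref{splitnd}. If $\cal H$ is degenerate, then, since the tractor metric $h$ is $\Hol(M,c)$-invariant, so is $\cal H^\perp$, and hence so is the radical $\cal H\cap\cal H^\perp$, which is nontrivial and totally lightlike. Transporting $\cal H$ (or $\cal H\cap\cal H^\perp$) to a $\nabla^{nc}$-parallel subbundle of $\cal T$ by the holonomy principle, I may therefore assume that $\cal H$ is either non-degenerate or totally lightlike, which is exactly the dichotomy in the statement.

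In the non-degenerate case, orientability of $M$ lets me invoke Proposition~\ref{splitnd} to get $\cal E=0$ on an open dense subset $M'\subset M$; since $\mathrm{Im}(\cal O)\subset\cal E$ by Theorem~\ref{obstrhol}, the tensor $\cal O$ vanishes on $M'$, and by smoothness of $\cal O$ together with the density of $M'$ it vanishes on all of $M$, proving~(1). In the totally lightlike case, Proposition~\ref{splitndlight} yields, locally on an open dense subset, a metric $g\in c$ with $\cal H=\rr s_+\oplus\cal L$ for a $\nabla^g$-parallel distribution $\cal L\subset TM$ containing both $\cal E$ and $\mathrm{Im}(\Ric^g)$; combining this with $\mathrm{Im}(\cal O)\subset\cal E\subset\cal L$ from Theorem~\ref{obstrhol} gives~(2), and completes the proof.
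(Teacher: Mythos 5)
Your proposal is correct and follows essentially the same route the paper intends: the corollary is stated there as an immediate consequence of Propositions~\ref{splitnd} and~\ref{splitndlight} together with Theorem~\ref{obstrhol} ($\mathrm{Im}(\mathcal{O})\subset\mathcal{E}$), and your write-up supplies exactly the missing details — the reduction to the radical $\mathcal{H}\cap\mathcal{H}^\perp$ via invariance of $h$, and the density-plus-smoothness step to get $\mathcal{O}=0$ everywhere in case (1).
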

Specialising the total lightlike case in this corollary further, in  Section \ref{spin43} we will consider  Bryant's conformal structures as examples.  Another example is the following:
\begin{bsp}
 Suppose that $M$ is of split signature $(n,n)$ and that $\Hol(M,c)$ leaves invariant two complementary totally lightlike distributions $\mathcal{H} \oplus \mathcal{H}' = \mathcal{T}$, i.e. $\Hol(M,c) \subset \mathbf{GL}(n+1,\rr) \subset \mathbf{SO}(n+1,n+1)$. Such conformal structures arise from Fefferman type constructions starting with $n-$dimensional projective structures, see \cite{HammerlSagerschnig11twistor,HammerlSagerschnigSilhanTaghavi-ChabertZadnik15}. For $\mathcal{H}$ and $\mathcal{H}'$ define $L$ and $L'$ as above and fix a local metric $g$ such that $\mathcal{H}$ is of the form \eqref{grep} on some set $U \subset M$. Elementary linear algebra shows that on $U$ the space $L \cap L'$ is at each point at most $1$-dimensional. Moreover, we have from the conformal covariance of $\mathcal{O}$ and Corollary \ref{splitnd-cor} that  $\mathrm{Im}(\mathcal{O}) \subset L \cap L'$. It follows that the rank of $\mathcal{O}$ is less or equal to one on an open, dense subset of $M$. 
\end{bsp}

\begin{Proposition}
Let $(M,c)$ be an even-dimensional conformal manifold admitting a twistor spinor $\varphi$. Then, at each point 
\begin{align}
\mathrm{Im}(\mathcal{O}) \subset{\cal  L}_{\varphi} \label{Ots}
\end{align}
 In particular, $\mathcal{O}$ vanishes if there are twistor spinors whose associated subspaces $L$ are transversal on an open and dense subset of $M$. 
 \end{Proposition}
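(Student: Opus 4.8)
The plan is to deduce this proposition directly from Theorem~\ref{obstrhol} together with the holonomy characterisation of twistor spinors recalled in Section~\ref{bspsec}. First I would recall that a twistor spinor $\varphi$ on $(M,g)$ is equivalent to a $\nabla^{nc}$-parallel section $\psi$ of the spin tractor bundle, and that parallelity forces $\mathfrak{hol}_x(M,c)\cdot\psi_x=0$ for all $x$, where the dot denotes Clifford multiplication by adjoint tractors acting on spin tractors (this is exactly \eqref{holts}). By Theorem~\ref{obstrhol}, for any $g\in c$ and any $X\in T_xM$ the adjoint tractor $s_-^\flat\wedge\mathcal{O}(X)^\flat$ lies in $\mathfrak{hol}_x(M,c)$, so it must annihilate $\psi_x$ under Clifford multiplication.

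The key step is then a purely algebraic computation in the Clifford algebra of the tractor metric $h$, identifying what the condition $(s_-^\flat\wedge\mathcal{O}(X)^\flat)\cdot\psi_x=0$ says. Writing the adjoint tractor $s_-^\flat\wedge Y^\flat$ (for $Y=\mathcal{O}(X)\in T_xM$) as an element of $\mathfrak{so}(\mathcal{T},h)$ and lifting it to $\spin(\mathcal{T},h)$, its Clifford action on the spin tractor $\psi$ can be computed using the standard formula relating a $2$-form $u\wedge v$ to $\tfrac14[u\cdot,v\cdot]=\tfrac12 u\cdot v\cdot$ when $u\perp v$; here $s_-$ is lightlike and $Y$ is orthogonal to $s_-$, so the relevant element acts essentially as $s_-\cdot Y\cdot$ on $\psi_x$. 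The spin tractor $\psi$ corresponds to the twistor spinor $\varphi$ through the top filtration component (in the splitting determined by $g$, $\psi$ has $\varphi$ as its projective/primary slot), and $s_-\cdot$ acting on the spin tractor recovers precisely Clifford multiplication by $\mathcal{O}(X)$ on $\varphi$ in the base spinor bundle. Hence $0=(s_-^\flat\wedge\mathcal{O}(X)^\flat)\cdot\psi_x$ unwinds to $\mathcal{O}(X)\cdot\varphi_x=0$, i.e.\ $\mathcal{O}(X)\in\mathcal{L}_\varphi$ at $x$, which is exactly \eqref{Ots}.

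For the final sentence, suppose $(M,c)$ carries twistor spinors $\varphi_1,\varphi_2$ whose associated subspaces $\mathcal{L}_{\varphi_1}$ and $\mathcal{L}_{\varphi_2}$ are transversal, i.e.\ $\mathcal{L}_{\varphi_1}\cap\mathcal{L}_{\varphi_2}=0$, on an open and dense subset $U\subset M$. Applying \eqref{Ots} to each spinor gives $\mathrm{Im}(\mathcal{O}_x)\subset\mathcal{L}_{\varphi_1,x}\cap\mathcal{L}_{\varphi_2,x}=0$ for every $x\in U$, so $\mathcal{O}$ vanishes on $U$; by continuity and density of $U$, and since $\mathcal{O}$ is a smooth tensor, $\mathcal{O}\equiv0$ on all of $M$.

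The main obstacle I anticipate is bookkeeping in the second step: one must carefully fix conventions for the spin tractor bundle (its construction via the prolongation of the twistor equation, as in \cite{leitnerhabil}), for the identification of $\spin(\mathcal{T},h)$-elements with tractor $2$-forms under $h$, and for how $s_-$ and the $|1|$-grading sit inside the Clifford algebra, so that the Clifford action of $s_-^\flat\wedge\mathcal{O}(X)^\flat$ on $\psi$ genuinely reduces to $\mathcal{O}(X)\cdot\varphi$ and not to some other slot of $\psi$. Everything else is a formality: Theorem~\ref{obstrhol} supplies the holonomy membership, \eqref{holts} supplies the annihilation, and transversality plus density finishes the vanishing claim.
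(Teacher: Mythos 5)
Your proposal is correct and follows essentially the same route as the paper: combine Theorem~\ref{obstrhol} with the annihilation property \eqref{holts} to get $s_-\cdot\mathcal{O}(X)\cdot\psi=0$, unwind this via the relation between the parallel spin tractor $\psi$ and the twistor spinor $\varphi$ (as in \cite{leitnerhabil}) to obtain $\mathcal{O}(X)\cdot\varphi=0$, and then use transversality on an open dense set plus continuity to conclude $\mathcal{O}\equiv0$. The only cosmetic difference is that the paper states the equivalence with $\mathcal{O}(X)\cdot\varphi(x)=0$ only where $\varphi(x)\neq0$ (the inclusion being trivial on the zero set since there $\mathcal{L}_\varphi=T_xM$), a point your write-up glosses over but which causes no gap.
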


\begin{proof}
Combining Theorem \ref{maintheo} with relation \eqref{holts} yields that
\begin{align}
s_- \cdot \mathcal{O}(X) \cdot \psi = 0 \label{spintractor}.
\end{align}
Filling in the technical details how $\psi$ is related to $\varphi$ by means of a metric in the conformal class as done in \cite{leitnerhabil}, reveals that \eqref{spintractor} is equivalent to
\begin{align}
\mathcal{O}(X) \cdot \varphi(x) = 0, \text{ for } \varphi(x) \neq 0,
\end{align}
which is clearly equivalent to \eqref{Ots}.
\end{proof}

We continue by  combining  Theorem \ref{obstrhol} with the results in Section \ref{adaptsec}.
In the non-generic case, i.e., when $\fhol(M,c)\not= \fso(p+1,q+1)$,  Proposition \ref{furr} shows that    the image of $\cal O$ is lightlike over an open dense set in $M$, and hence everywhere:
\begin{Corollary} \label{obstrrank2}
 If $\mathfrak{hol}(M,c)$ is not generic, then $\mathrm{Im}(\mathcal{O})$ is totally lightlike. In particular,   
if $(M,c)$ is Riemannian and  $\fhol(M,c)$ is not generic, then  $\mathcal{O} = 0$.
\end{Corollary}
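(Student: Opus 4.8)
The plan is to feed Theorem~\ref{obstrhol} into the local structure theory of $\mathcal{E}$ developed in Section~\ref{adaptsec}. By Theorem~\ref{obstrhol}, for any $g\in c$ and any $x\in M$, $X\in T_xM$, the skew endomorphism $s_-^{\flat}\wedge\mathcal{O}^{\flat}(X)$ lies in $\mathfrak{hol}_x(M,c)$; since this element sits in $\mathfrak{g}_1$ with respect to $g$, it lies in $\mathfrak{hol}_x(M,c)\cap\mathfrak{g}_1$, and under the canonical identification of that space with a subspace of $T_xM$ this says precisely that $\mathrm{Im}(\mathcal{O}_x)\subset\mathcal{E}_x$ for every $x$. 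Thus the first assertion reduces to showing that $\mathrm{Im}(\mathcal{O}_x)$ (indeed $\mathcal{E}_x$, where available) is totally lightlike at every point once $\mathfrak{hol}(M,c)$ fails to be generic.

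First I would invoke Theorem~\ref{eadapated} to obtain an open dense subset $M'\subset M$ that is a union of $\mathcal{E}$-adapted open sets. On each such adapted set Proposition~\ref{furr} applies, and the hypothesis that $\mathfrak{hol}(M,c)$ is not generic forces the alternative in which $\mathcal{E}_x$ is a totally lightlike subspace of $T_xM$ for every $x$ in that set. Hence $\mathrm{Im}(\mathcal{O}_x)$ is totally lightlike for all $x\in M'$; equivalently, for any local vector fields $X,Y$ the smooth function $x\mapsto g(\mathcal{O}(X),\mathcal{O}(Y))$ vanishes on the dense set $M'$, hence vanishes identically. Therefore $\mathrm{Im}(\mathcal{O}_x)$ is totally lightlike for every $x\in M$, a condition which, being about null subspaces, is conformally invariant and so independent of the choice of $g\in c$. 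Since a totally lightlike subspace of a space of signature $(p,q)$ has dimension at most $\min(p,q)$, this also yields $\rk(\mathcal{O})\le\min(p,q)$ everywhere, recovering the corresponding clause of Theorem~\ref{maintheo}.

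For the Riemannian case I would simply observe that if $(M,c)$ has Riemannian signature then every $g\in c$ is definite, so the only totally lightlike subspace of $T_xM$ is $\{0\}$; combined with the previous paragraph this gives $\mathrm{Im}(\mathcal{O}_x)=0$ for all $x$, i.e.\ $\mathcal{O}\equiv 0$.

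The substance of the argument is carried entirely by results already in place --- Theorem~\ref{obstrhol} for the inclusion $\mathrm{Im}(\mathcal{O})\subset\mathcal{E}$, Theorem~\ref{eadapated} for the existence of enough adapted open sets, and Proposition~\ref{furr} for the lightlikeness of $\mathcal{E}$ on them --- so there is no serious obstacle. The only step requiring a word of care is the passage from the open dense set $M'$ to all of $M$, which is immediate from the smoothness of the tensor $\mathcal{O}$ as indicated above.
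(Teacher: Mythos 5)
Your argument is correct and is essentially the paper's own: the paper proves this corollary by exactly the same chain --- Theorem~\ref{obstrhol} giving $\mathrm{Im}(\mathcal{O})\subset\mathcal{E}$, Proposition~\ref{furr} on the $\mathcal{E}$-adapted open sets supplied by Theorem~\ref{eadapated} forcing $\mathcal{E}$ (hence $\mathrm{Im}(\mathcal{O})$) to be totally lightlike on an open dense set, continuity of $\mathcal{O}$ to conclude everywhere, and definiteness of the metric in the Riemannian case to get $\mathcal{O}=0$. You have merely spelled out details the paper leaves implicit, so there is nothing to correct.
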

The statement in Corollary \ref{obstrrank2} about Riemannian conformal structure can  be pieced together  from several results in the literature: the decomposition theorem in \cite{armstrong07conf} states that a conformal structure with  holonomy reduced from $\so(1,n+1)$, locally over an open dense subset of $M$, contains an Einstein metric or a certain product of Einstein metrics. Corollary \ref{einsteincor} and the results in \cite{GoverLeitner09} about products of Einstein metrics  then ensure that $(M,c)$ admits an ambient metric whose Ricci tensor vanishes to infinite order, and hence that  the obstruction tensor vanishes. Our proof of $\cal O=0$ for Riemannian non generic conformal classes in Corollary \ref{obstrrank2} is self-contained and does not make use of the results in the literature.

We consider now several options for the rank of $\cal O$. 
We obtain from 
Proposition \ref{openorbit}:
\begin{Corollary}

If $\Hol^0(M,c)$ has no  open orbit on the  lightcone $\mathcal{N} \subset \rr^{p+1,q+1}$, then $\rk(\cal O)\le 1$.
\end{Corollary}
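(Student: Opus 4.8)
The plan is to deduce this corollary directly from Proposition~\ref{openorbit} together with Theorem~\ref{obstrhol}. Recall that Proposition~\ref{openorbit} asserts that for an even-dimensional conformal manifold, either $\Hol^0(M,c)$ acts on the lightcone $\mathcal{N}\subset\rr^{p+1,q+1}$ with an open orbit, or there is an open dense subset of $M$ on which $r^{\cal E}\le 1$. Since we are assuming that $\Hol^0(M,c)$ has \emph{no} open orbit on $\mathcal{N}$, the first alternative is ruled out, and hence $r^{\cal E}\le 1$ holds on an open dense subset $M'\subset M$.

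Next I would invoke Theorem~\ref{obstrhol} (equivalently the first part of Theorem~\ref{maintheo}), which tells us that for each $x\in M$ and each $X\in T_xM$ the element $s_-^{\flat}\wedge\mathcal{O}^{\flat}(X)$ lies in $\fhol_x(M,c)\cap\fg_1$, i.e. in $\cal E_x$. Consequently $\mathrm{Im}(\mathcal{O}_x)\subseteq\cal E_x$ for every $x$, so in particular $\rk(\mathcal{O}_x)\le r^{\cal E}(x)$. Combining this with the previous step gives $\rk(\mathcal{O})\le 1$ on the open dense set $M'$.

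Finally, to pass from ``open dense'' to ``everywhere'' I would use the fact, noted after equation~\eqref{defobstr}, that $\mathcal{O}$ is a natural tensor built polynomially from $g$ and its derivatives (alternatively: the rank of a continuous tensor field is lower semicontinuous, so the closed condition $\rk(\mathcal{O})\le 1$, equivalently the vanishing of all $2\times 2$ minors of $\mathcal{O}$ viewed as a $(1,1)$-tensor, propagates from a dense set to its closure). Since $\mathcal{O}$ is smooth, the vanishing of its $2\times 2$ minors on the dense set $M'$ forces their vanishing on all of $M$, whence $\rk(\mathcal{O})\le 1$ everywhere.

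I do not anticipate a serious obstacle here: the corollary is essentially an immediate consequence of Proposition~\ref{openorbit} and Theorem~\ref{obstrhol}, and the only mildly technical point is the density argument, which is handled by the continuity (indeed naturality) of $\mathcal{O}$ together with the fact that ``$\rk\le 1$'' is a closed condition. If one wished to be slightly more careful one could instead argue that the set where $r^{\cal E}\le 1$ is itself open (as established in the proof of Theorem~\ref{eadapated}), so it is open and dense, and on its complement $\mathcal{O}$ already vanishes by continuity from the open dense part.
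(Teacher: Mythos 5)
Your proposal is correct and takes essentially the same route as the paper: Proposition~\ref{openorbit} rules out the open-orbit alternative, so $r^{\cal E}\le 1$ on an open dense set, Theorem~\ref{obstrhol} gives $\mathrm{Im}(\mathcal{O}_x)\subset\mathcal{E}_x$ and hence $\rk(\mathcal{O})\le 1$ there, and the closedness of the rank-$\le 1$ condition (vanishing of $2\times 2$ minors of the smooth tensor $\mathcal{O}$) extends this to all of $M$, exactly as in the paper's two-line argument.
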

Indeed, if $\Hol^0(M,c)$ has no  open orbit on the  lightcone $\mathcal{N} \subset \rr^{p+1,q+1}$, then by 
Proposition \ref{openorbit} the rank of $\cal O$ is $\le 1$ on an open dense set. Hence, the rank is $\le 1$ everywhere.

Again we refer to  \cite{Alt12} were conformal structures with a transitive and irreducible action of the conformal holonomy are classified.
Moreover,   Proposition \ref{lightlike} implies:
\begin{Corollary} \label{obstrrank1}
If the rank of  $\mathcal{O}$ is maximal at some point $x \in M$, then $\mathfrak{hol}(M,c)=\so(p+1,q+1)$  is generic. In particular, all parallel tractors are obtained from  the tractor metric $h$ only.
\end{Corollary}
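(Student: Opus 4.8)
The plan is to upgrade the pointwise hypothesis to a local one --- ``maximal rank'' being an open condition --- and then to invoke Proposition~\ref{lightlike}. Throughout, ``maximal rank'' means $\rk(\mathcal{O})=n$. Viewing $\mathcal{O}$ as a $(1,1)$-tensor by means of some $g\in c$, the set $\{y\in M\mid \rk(\mathcal{O}_y)=n\}$ is precisely the locus on which $\det\mathcal{O}$ (computed in a local frame) is nonzero, hence it is open in $M$. Thus, if $\rk(\mathcal{O}_x)=n$ at one point $x$, there is an open neighbourhood $W\ni x$ with $\rk(\mathcal{O})\equiv n$ on $W$, i.e.\ $\mathrm{Im}(\mathcal{O}_y)=T_yM$ for every $y\in W$. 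Since Theorem~\ref{obstrhol} gives $\mathrm{Im}(\mathcal{O}_y)\subset\mathcal{E}_y$, we conclude $\mathcal{E}_y=T_yM$, that is, $r^{\mathcal{E}}\equiv n$ on $W$.

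The next step is to produce a nondegenerate curve on which $r^{\mathcal{E}}$ is constantly $n$. Since $g$ is nondegenerate, $T_xM$ contains a vector $v$ with $g(v,v)\neq 0$; taking any smooth curve $\gamma$ in $W$ with $\gamma(0)=x$, $\dot\gamma(0)=v$, and shrinking its parameter interval, we have $g(\dot\gamma,\dot\gamma)\neq 0$ along $\gamma$ while $r^{\mathcal{E}}\circ\gamma\equiv n$ because $\gamma$ stays inside $W$. Proposition~\ref{lightlike} then applies verbatim and yields $\mathfrak{hol}(M,c)=\mathfrak{so}(p+1,q+1)$, i.e.\ the conformal holonomy is generic. (Alternatively, one could restrict $W$ to an $\mathcal{E}$-adapted open set via Theorem~\ref{eadapated} and apply Proposition~\ref{furr}, using that $T_yM$ is never totally lightlike.)

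Finally, for the ``in particular'' I would argue that any parallel tractor, i.e.\ a parallel section of a tensor bundle associated with $\mathcal{T}$, is pointwise annihilated by $\mathfrak{hol}_x(M,c)=\mathfrak{so}(p+1,q+1)$; by the first fundamental theorem of invariant theory for the (pseudo-)orthogonal group, the $\mathfrak{so}(p+1,q+1)$-invariant tensors over $\mathbb{R}^{p+1,q+1}$ are generated by the tractor metric $h$ (in the spin case one additionally uses that the spinor module carries no invariant spinor), so no further parallel tractor can occur. I do not anticipate a genuine obstacle: the substantive input consists entirely of Theorem~\ref{obstrhol} and Proposition~\ref{lightlike} proved above, and the only steps needing a little care are the openness of $\{\rk(\mathcal{O})=n\}$ together with the reduction to a nondegenerate curve in $W$, and the usual bookkeeping of which tensorial objects count as ``built from $h$''.
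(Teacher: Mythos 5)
Your argument is correct and is essentially the paper's own route: the paper derives Corollary~\ref{obstrrank1} exactly from Theorem~\ref{obstrhol} (giving $\mathrm{Im}(\mathcal{O})\subset\mathcal{E}$) together with Proposition~\ref{lightlike}, and your openness-of-full-rank step plus the choice of a non-null curve in the resulting neighbourhood supplies precisely the hypotheses of that proposition. The concluding invariant-theory remark for the ``in particular'' clause matches the standard holonomy-principle reasoning the paper leaves implicit.
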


Corollary \ref{obstrrank1} demonstrates that the ambient obstruction tensor $\mathcal{O}$ can also be interpreted as an obstruction to the existence of parallel tractors on $(M,c)$ of any type. Namely for such a tractor to exist, $\mathcal{O}$ needs to have a nontrivial kernel everywhere. 
We analyse this phenomenon in more detail in  by focusing on parallel tractor forms and the associated normal conformal Killing forms (see Section \ref{bspsec}). Proposition \ref{ncform} implies:
\begin{Corollary} \label{ncform-cor}
If $(M,c)$ admits a nc-Killing form $\alpha \in \Omega^k(M)$, then   $\mathrm{Im}(\mathcal{O}) \wedge \alpha = 0$.
\end{Corollary}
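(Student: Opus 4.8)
The plan is to obtain this as an immediate consequence of Theorem~\ref{obstrhol} together with Proposition~\ref{ncform}; no new computation is required, only a reinterpretation of the former.

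First I would translate Theorem~\ref{obstrhol} into a statement about the conformal holonomy distribution. Fix $g\in c$ and $x\in M$. By that theorem, $s_-^{\flat}\wedge\mathcal{O}^{\flat}(X)\in\mathfrak{hol}_x(M,[g])$ for every $X\in T_xM$. Such an adjoint tractor is manifestly of the form $s_-^{\flat}\wedge Y^{\flat}$ with $Y=\mathcal{O}(X)$, hence it lies in $\mathfrak{hol}_x(M,c)\cap\mathfrak{g}_1$, which under the identification recalled in Section~\ref{holdist} means precisely $\mathcal{O}(X)\in\mathcal{E}_x$. Letting $X$ range over $T_xM$ gives $\mathrm{Im}(\mathcal{O})_x\subset\mathcal{E}_x$ at every point; this is of course just the first assertion of Theorem~\ref{maintheo}, which may be cited directly instead.

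Second I would feed this into Proposition~\ref{ncform}, which states that a nc-Killing $k$-form $\alpha$ satisfies $V^{\flat}\wedge\alpha=0$ for every $V\in\mathcal{E}$. Applying this with $V=\mathcal{O}(X)$ — legitimate by the previous step — yields $\mathcal{O}(X)^{\flat}\wedge\alpha=0$ for all $X\in T_xM$. Since $\mathrm{Im}(\mathcal{O})_x$ is the span of the vectors $\mathcal{O}(X)$, the desired identity $\mathrm{Im}(\mathcal{O})\wedge\alpha=0$ follows pointwise, and hence globally.

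Since both ingredients are already available in the excerpt, there is essentially no obstacle. The only point worth stating carefully is the passage from the adjoint-tractor formulation of Theorem~\ref{obstrhol} to the geometric containment $\mathrm{Im}(\mathcal{O})\subset\mathcal{E}$, which rests on recognising elements of the shape $s_-^{\flat}\wedge Y^{\flat}$ as exactly those of $\mathfrak{hol}_x(M,c)\cap\mathfrak{g}_1$; everything else is pointwise linear algebra, and no choice of $g\in c$ enters the final statement because both $\mathcal{E}$ and the nc-Killing form condition are conformally invariant.
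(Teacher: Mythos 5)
Your argument is exactly the paper's: the corollary is stated there as an immediate consequence of Theorem~\ref{obstrhol} (equivalently, the containment $\mathrm{Im}(\mathcal{O})\subset\mathcal{E}$ from Theorem~\ref{maintheo}) combined with Proposition~\ref{ncform}, which is precisely the two-step reduction you give. The proof is correct and complete as written.
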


\begin{Corollary} \label{ncvf}
If $V$ is a normal conformal vector field for $(M,c)$, then  $\mathrm{Im}(\mathcal{O}) \subset \mathbb{R} V$ whenever $V \neq 0$. In particular, $\mathcal{O}$ vanishes if there is a normal conformal vector field that is not lightlike, or if the space of normal conformal vector fields has dimension greater than 1. 
\end{Corollary}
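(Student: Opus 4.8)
The plan is to reduce the whole statement to Corollary~\ref{ncform-cor} together with Theorem~\ref{maintheo} and the rigidity of normal conformal vector fields. First I would recall from Section~\ref{bspsec} that a normal conformal vector field $V$ is, by definition, the metric dual of a normal conformal Killing $1$-form $\alpha=V^{\flat}$, and that $\alpha$ corresponds to a $\nabla^{nc}$-parallel adjoint tractor $\widehat{\alpha}$ whose $g$-splitting \eqref{splitncform} has $\alpha$ as its leading component. Applying Corollary~\ref{ncform-cor} to the $1$-form $\alpha$ gives $\mathcal{O}^{\flat}(X)\wedge V^{\flat}=0$ for all $X$. Since the wedge with a nonzero $1$-form has kernel exactly the line it spans, at each $x$ with $V(x)\neq 0$ this is equivalent to $\mathrm{Im}(\mathcal{O})_x\subset\mathbb{R}V(x)$, which is the first assertion (and it is vacuous where $V(x)=0$).

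For the non-lightlike case I would first observe that the existence of $V$ already forces $\fhol(M,c)$ to be non-generic: the parallel adjoint tractor $\widehat{\alpha}$ is nonzero because its $\alpha$-component $V^{\flat}$ is, it is fixed by $\Hol(M,c)$, and since $\fso(p+1,q+1)$ is simple its adjoint representation on $\Lambda^2\mathcal{T}\cong\fso(\mathcal{T},h)$ has no nonzero invariants; hence $\fhol(M,c)\varsubsetneq\fso(p+1,q+1)$. Theorem~\ref{maintheo} then makes $\mathrm{Im}(\mathcal{O})$ totally lightlike everywhere. If $g(V,V)$ is nowhere zero, $\mathbb{R}V(x)$ is a non-degenerate line at each point, so the inclusion $\mathrm{Im}(\mathcal{O})_x\subset\mathbb{R}V(x)$ forces $\mathrm{Im}(\mathcal{O})_x=0$, i.e.\ $\mathcal{O}=0$. (Alternatively one could quote that a non-null normal conformal vector field yields a local Einstein metric and invoke Corollary~\ref{einsteincor}, but the above keeps everything inside the present machinery.)

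For the case $\dim\geq 2$, take normal conformal vector fields $V_1,V_2$ that are linearly independent over $\mathbb{R}$; both are then nontrivial. On the open set $W=\{x : V_1(x),V_2(x)\ \text{linearly independent}\}$ the first assertion gives $\mathrm{Im}(\mathcal{O})_x\subset\mathbb{R}V_1(x)\cap\mathbb{R}V_2(x)=\{0\}$, so $\mathcal{O}|_W=0$, and since $\mathcal{O}$ is a smooth tensor this yields $\mathcal{O}\equiv 0$ once $W$ is dense. To see density I would argue that $\{V_1\wedge V_2=0\}$ has empty interior: on a small connected open set where $V_1\wedge V_2\equiv 0$ and $V_1\neq 0$ one has $V_2=fV_1$, and writing out $\mathcal{L}_{fV_1}g=f\,\mathcal{L}_{V_1}g+df\otimes V_1^{\flat}+V_1^{\flat}\otimes df$ shows that conformality of $fV_1$ forces $df\otimes V_1^{\flat}+V_1^{\flat}\otimes df$ to be pure-trace, which (for $n\geq 3$ and $V_1\neq 0$) gives $df=0$; thus $f$ is locally constant, so $V_2-cV_1$ is a normal conformal vector field vanishing on an open set. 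Because the associated adjoint tractor is also zero there (its lower components being built from $\alpha$ and $\nabla^g$) and is parallel, it vanishes identically, so $V_2=cV_1$ on $M$, contradicting linear independence.

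The step I expect to cost the most care is exactly this last density argument --- upgrading "linearly independent as global sections" to "pointwise linearly independent on a dense set". It hinges on the rigidity of normal conformal vector fields (a parallel tractor vanishing on an open set vanishes everywhere) and on the elementary but slightly fiddly fact that $fV$ is conformal Killing only when $f$ is locally constant; by contrast the two inclusions $\mathrm{Im}(\mathcal{O})_x\subset\mathbb{R}V_i(x)$ and the simplicity argument that makes $\fhol$ non-generic are routine. A minor secondary point is making precise what "not lightlike" means; I would take it as $g(V,V)$ nowhere vanishing (equivalently $V$ nowhere null), which is the hypothesis under which the argument above directly gives $\mathcal{O}=0$.
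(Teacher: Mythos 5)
Your proof is correct and follows essentially the route the paper intends (but does not spell out): the inclusion $\mathrm{Im}(\mathcal{O})\subset\mathbb{R}V$ is exactly Corollary~\ref{ncform-cor} for $k=1$, and the non-lightlike case is settled by combining this with the totally lightlike image guaranteed by the second part of Theorem~\ref{maintheo} (equivalently Corollary~\ref{obstrrank2}), the holonomy being non-generic since a parallel adjoint tractor is an invariant of the adjoint representation of the simple algebra $\fso(p+1,q+1)$. Your density/rigidity argument for the two-field case supplies details the paper omits and is sound; the only sub-case you leave implicit --- the interior of $\{V_1\wedge V_2=0\}$ consisting entirely of zeros of $V_1$ (and $V_2$) --- is disposed of by the same observation you already use, namely that a parallel adjoint tractor vanishing on an open set vanishes identically on connected $M$.
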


In particular, Corollary \ref{ncvf} applies to Fefferman conformal structures $(M,c)$ of signature $(2k+1,2r+1)$, i.e. $\Hol(M,c) \subset \mathbf{SU}(k+1,r+1)$. They admit a distinguished normal conformal Killing vector field $V_F$. Thus, 
\begin{align}
\text{Im }\mathcal{O} \subset \mathbb{R} V_F, \label{cfeff}
\end{align}
 for which an independent proof can be found in  \cite{GrahamHirachi08}. For the Lorentzian case, i.e. $k=0$, any additional holonomy reduction will force $\mathcal{O}$ to vanish.

\begin{Proposition}
Let $(M,c)$ be a Lorentzian conformal manifold of even dimension $n$ with $\fhol(M,c) \varsubsetneq \fsu(1,\frac{n}{2})$. Then $\mathcal{O} = 0$.
\end{Proposition}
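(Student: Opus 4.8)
The plan is as follows. Write $m=\tfrac n2$, so that $\mathcal{T}\cong\mathbb{R}^{2,2m}$ carries the Hermitian structure of signature $(1,m)$ preserved by $\fsu(1,m)$. The hypothesis $\fhol(M,c)\varsubsetneq\fsu(1,m)$ gives in particular a $\nabla^{nc}$-parallel orthogonal complex structure $\mathbf{J}$ on $\mathcal{T}$, i.e.\ a parallel adjoint tractor with $\mathbf{J}^2=-\mathrm{Id}$. Expressing $\mathbf{J}$ with respect to a fixed $g\in c$ in the adjoint-tractor form of Section~\ref{sec2}, one sees that $\mathbf{J}(s_+)$ has a $TM$-component $V_F$ which is nonzero at every point (if $V_F(x)=0$ then $\mathbf{J}$ would preserve the line $\mathbb{R}s_+$ at $x$, impossible since $\mathbf{J}^2=-\mathrm{Id}$), that $V_F$ is lightlike, and that $V_F$ is precisely the normal conformal vector field associated (as in Section~\ref{bspsec}) to the parallel tractor $2$-form $\mathbf{J}$. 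Corollary~\ref{ncvf} then already yields $\mathrm{Im}(\mathcal{O})\subset\mathbb{R}V_F$ pointwise, and since $V_F$ is lightlike while $\mathcal{O}$ is symmetric and trace-free, $\mathcal{O}=c\,V_F^{\flat}\otimes V_F^{\flat}$ for some function $c$. The entire task is therefore to show $c\equiv0$.

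Next I would exploit that the reduction is proper. The action of $\fhol(M,c)$ on $\mathbb{R}^{2,2m}$ cannot be irreducible: an irreducible proper subalgebra of $\fsu(1,m)$ would, by the pseudo-Kähler version of Berger's classification (argued as in Lemma~\ref{2128lemma}; cf.\ \cite{Di-ScalaLeistner11}), have to be conjugate to some $\fsp(a,b)$ or to the holonomy of a pseudo-Hermitian symmetric space --- the former is excluded because $\fsp(a,b)$ preserves a Hermitian form of even positive index, the latter because none of these has tangent-space Hermitian signature $(1,m)$. Hence $\fhol(M,c)$ preserves some proper nonzero subspace; applying repeatedly the operations $H\mapsto H\cap\mathbf{J}H$, $H\mapsto H+\mathbf{J}H$ and $H\mapsto H\cap H^{\perp}$ until the process stabilizes, one arrives either at a $\mathbf{J}$-invariant non-degenerate proper subspace or at a $\mathbf{J}$-invariant totally lightlike one. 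In the first case Corollary~\ref{splitnd-cor}(1) --- applied on the orientation double cover, which changes neither the hypothesis nor $\mathcal{O}$ --- gives $\mathcal{O}=0$. In the second case the subspace, being a totally isotropic complex subspace of a Hermitian space of signature $(1,m)$, has complex dimension $1$; so we may assume $\fhol(M,c)$ fixes a $\mathbf{J}$-invariant totally lightlike real $2$-plane $W\subset\mathcal{T}$.

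This last case is the heart of the argument. Corollary~\ref{splitnd-cor}(2) provides, on an open dense subset of $M$ and locally, a metric $g\in c$ with $W=\mathbb{R}s_+\oplus\mathcal{L}$, $\mathcal{L}=\mathbb{R}\ell\subset TM$ a $\nabla^g$-parallel null line, and $\mathrm{Im}(\Ric^g),\mathrm{Im}(\mathcal{O})\subset\mathcal{L}$. Because $W$ is $\mathbf{J}$-invariant and $\mathbf{J}(s_+)$ has $TM$-component $V_F$, necessarily $V_F\in\mathbb{R}\ell$; as $\ell$ is Killing and $n\ge3$, comparing the Lie derivative of $g$ along $V_F$ (of rank $\le2$) with the conformal Killing equation forces $V_F$ to be a constant multiple of $\ell$, hence $\nabla^g$-parallel. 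Inserting $\nabla^{nc}\mathbf{J}=0$ and $\mathbf{J}^2=-\mathrm{Id}$ into the structure equations \eqref{derform} in a frame adapted to $W$ then pins $g$ down: one finds $\scal^g=0$ and $\mathsf{P}^g=V_F^{\flat}\otimes V_F^{\flat}$, so $g$ is a Brinkmann wave with parallel null vector field $V_F$ and $\Ric^g=(n-2)V_F^{\flat}\otimes V_F^{\flat}$. For such a normal form I would conclude $c\equiv0$ either by producing an almost-Einstein scale in $c$ --- the equation $\text{tf}(\Hess^g\beta)=-\beta\,\mathsf{P}^g$ reduces here to a linear transport equation along the flow of $V_F$ and is therefore locally solvable --- and applying Corollary~\ref{einsteincor}, or by substituting this normal form into the Fefferman--Graham expansion and verifying directly that $\widetilde{\Ric}=O(\rho^{n/2})$, so that $\mathcal{O}=0$ by \eqref{defobstr}. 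In either case $\mathcal{O}$ vanishes on an open dense subset, hence identically by continuity.

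\emph{Main obstacle.} Everything outside the totally lightlike case is bookkeeping built on Corollaries~\ref{ncvf} and~\ref{splitnd-cor} together with the Berger-type classification. The genuine content is precisely that case: one has to show that a conformal structure whose tractor holonomy fixes both a complex structure and a null complex line is obstruction-flat. I expect the decisive step to be pushing the $\mathbf{J}$-invariance and parallelism conditions far enough to recognize the underlying metric as a very degenerate wave metric (parallel null vector field, scalar-flat, Ricci of rank $\le1$), and then extracting the vanishing of $\mathcal{O}$ for that explicit normal form --- this is the one point that is not a routine manipulation of the identities already established.
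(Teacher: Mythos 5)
Your reduction to the case of an invariant totally lightlike $\mathbf{J}$-invariant $2$-plane, and the treatment of the non-degenerate and line cases via Corollaries \ref{einsteincor} and \ref{splitnd-cor}, follow the paper's route (the paper justifies non-irreducibility by citing the classification in \cite{Di-ScalaLeistner11} and \cite{alt-discala-leistner14}, where your Berger-type sketch is vaguer but in the same spirit). The genuine gap is in what you call the heart of the argument. The normal conformal vector field $V_F$ attached to the parallel adjoint tractor $\mathbf{J}$ is \emph{not} the $TM$-component of $\mathbf{J}(s_+)$: by \eqref{splitncform} the nc-Killing $1$-form is $\alpha(X)=h(\mathbf{J}s_-,X)$, so $V_F$ is the $TM$-component of $\mathbf{J}(s_-)$, i.e.\ the $\mathfrak{g}_{-1}$-part of $\mathbf{J}$, whereas $\mathbf{J}(s_+)$ only sees the $\mathfrak{g}_1$-part $\mu$. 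Consequently, $\mathbf{J}$-invariance of $W=\mathbb{R}s_+\oplus\mathcal{L}$ tells you that $\mu^{\sharp}\in\mathcal{L}$, but it gives no control whatsoever on $V_F$, since $s_-\notin W$. The inference ``$V_F\in\mathbb{R}\ell$'' is therefore unjustified, and it is in fact false on any open set: the recurrent null field spanning $\mathcal{L}$ has vanishing twist, while the Fefferman vector field $V_F$ is always twisting (see \cite{BaumLeitner04}), so the two null lines can never agree on an open set. Everything you build on this --- $V_F$ parallel, $\scal^g=0$, $\mathsf{P}^g=V_F^{\flat}\otimes V_F^{\flat}$, the Brinkmann normal form, and the concluding almost-Einstein-scale or Fefferman--Graham computation --- collapses with it. (The auxiliary claim that $\ell$ is Killing is also unsubstantiated; parallelism of the line $\mathcal{L}$ does not give a Killing field.)

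The paper turns exactly this incompatibility into the proof, in the opposite direction to what you attempt: since $\mathrm{Im}(\mathcal{O})\subset\mathbb{R}V_F$ by \eqref{cfeff} and $\mathrm{Im}(\mathcal{O})\subset\mathcal{L}=\mathbb{R}U$ by Corollary \ref{splitnd-cor}(2), a point where $\mathcal{O}\neq0$ would force $\mathbb{R}V_F=\mathbb{R}U$ on an open neighbourhood, contradicting $\omega_U=\theta\wedge U^{\flat}\wedge U^{\flat}=0$ versus $\omega_{V_F}\neq0$. So no normal form for the metric is needed at all; the two containments for $\mathrm{Im}(\mathcal{O})$ are incompatible unless $\mathcal{O}\equiv0$. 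If you want to salvage your strategy, this twist comparison is the missing ingredient replacing your unproven ``$V_F\in\mathbb{R}\ell$'' step.
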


\begin{proof}
From the classification of irreducibly acting subalgebras of $\fso(2,n)$ in \cite{Di-ScalaLeistner11} and the results in \cite{alt-discala-leistner14} it follows that $\fhol(M,c)$ has to act with an invariant subspace. If the holonomy representation fixes a non-degenerate subspace or a lightlike  line in $\mathbb{R}^{2,n}$ the result follows with Corollaries \ref{einsteincor} and \ref{splitnd-cor}. Otherwise, $\fhol(M,c)$ fixes a totally lightlike $2$-plane in $\mathbb{R}^{2,n}$ and  again Corollary \ref{splitnd-cor} applies. That is, there is (at least locally) a metric $g \in c$ admitting a recurrent and nowhere vanishing null vector field $U$, i.e. $\nabla^g U = \theta \otimes U$ for some 1-form $\theta$ and  $\mathrm{Im}(\mathcal{O}) \subset \mathbb{R} U$. Assume now that $\mathcal{O}$ is nonzero at some point. It follows from \eqref{cfeff} that there is an open subset of $M$ on which $\mathbb{R}V_F = \mathbb{R}U$. However, this contradicts the fact that the twist\footnote{ Recall that for a vector field $X \in \mathfrak{X}(M)$ its twist is the 3-form $\omega_X := dX^{\flat} \wedge X^{\flat}$. Clearly, the condition $d\omega_X = 0$ depends on $\mathbb{R}X$ only.} of $U$ is given by $\omega_U = \theta \wedge U^{\flat} \wedge U^{\flat} = 0$ but $\omega_{V_F} \neq 0$ (see \cite{BaumLeitner04}). Thus, $\mathcal{O} \equiv 0$.
\end{proof}

\bbem
In similar fashion, Fefferman spaces over quaternionic contact structures, cf. \cite{alt08}, admit 3 linearly independent $\Hol(M,c)$-invariant almost complex structures which descend to pointwise linear independent nc-vector fields (or 1-forms) on $M$. Thus $\mathcal{O} \equiv 0$ for this case by Corollary \ref{ncvf}.
\ebem

\subsection{The obstruction tensor for Bryant conformal structures} \label{spin43}

We will now specialise to Bryant conformal structures in signature $(3,3)$ induced by a generic 3-distribution $\cal D \subset TM$ as in Section \ref{bspsec},  and deduce several new results about the relation of the generic distribution $\cal D$ and the image of $\cal O$.

Every Bryant conformal structure admits (and is equivalently characterised by) a parallel tractor 4-form $\widehat{\alpha} \in \Gamma(M, \Lambda^4 \mathcal{T})$ whose stabiliser under the  $\SO(4,4)$-action at each point is isomorphic to $\Spin(4,3) \subset \SO(4,4)$. In particular, $\Hol(M,c) \subset \Spin(4,3)$. For a fixed metric $g \in c$ and the corresponding splitting \eqref{splitncform}, i.e.,
\begin{align}
 \widehat{\alpha} = s_+^{\flat} \wedge \alpha + \alpha_0 +...,\label{alpha}
 \end{align}
one finds that $\alpha = l^{\flat}_1 \wedge l^{\flat}_2 \wedge l^{\flat}_3$ for $l_{i=1,2,3}$ some basis of $\cal D$ and $\alpha$ transforms conformally covariant under a change of $g$. Using this, we can derive constraints on the obstruction tensor for Bryant conformal structures.
 
 As an immediate
 consequence of Proposition \ref{ncform} and Corollary \ref{ncform-cor} we obtain:
\begin{Corollary}\label{bryantcor1}
Let $(M, c_{\cal D})$ be a Bryant conformal structure induced by a generic 3-distribution  $\cal D \subset TM$. Then $\mathcal{E} \subset \cal D$, and in particular,  $\mathrm{Im}(\mathcal{O}) \subset\cal  D$. 
\end{Corollary}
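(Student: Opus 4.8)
The plan is to obtain this corollary as a direct consequence of the general results already assembled, with no new computation required. The key observation is that a Bryant conformal structure $(M,c_{\cal D})$ is, by the discussion in Section \ref{bspsec}, equivalently characterised by the existence of a parallel tractor $4$-form $\widehat\alpha$, and that under the splitting \eqref{splitncform} with respect to a fixed $g\in c$ its top component is $\alpha = l_1^\flat\wedge l_2^\flat\wedge l_3^\flat$ where $l_1,l_2,l_3$ span $\cal D$. In particular $\alpha$ is a nc-Killing $3$-form whose associated subspace $\{X\mid X^\flat\wedge\alpha=0\}$ is exactly $\cal D$ pointwise.

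First I would invoke Proposition \ref{ncform} applied to this $\alpha$: for every $V\in\mathcal{E}_x$ one has $V^\flat\wedge\alpha(x)=0$. Since $\alpha(x)=l_1^\flat\wedge l_2^\flat\wedge l_3^\flat$ is a nonzero decomposable $3$-form whose kernel (in the sense of the wedge product) is precisely $\cal D_x$, the condition $V^\flat\wedge\alpha(x)=0$ forces $V\in\cal D_x$. This gives $\mathcal{E}\subset\cal D$ at every point. Then I would apply Theorem \ref{obstrhol} (equivalently the first part of Theorem \ref{maintheo}), which says $\mathrm{Im}(\mathcal O)\subset\mathcal{E}_x$ at each $x$; composing the two inclusions yields $\mathrm{Im}(\mathcal O)\subset\cal D$. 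Alternatively, and even more directly for the obstruction-tensor statement, one can quote Corollary \ref{ncform-cor}: since $\mathrm{Im}(\mathcal O)\wedge\alpha=0$ and $\alpha$ is decomposable with kernel $\cal D$, the image of $\mathcal O$ lies in $\cal D$.

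There is essentially no obstacle here; the only point that needs a sentence of justification is the elementary linear-algebra fact that for a decomposable $k$-form $\alpha=\beta_1\wedge\cdots\wedge\beta_k$ on a vector space, the set $\{X : X^\flat\wedge\alpha=0\}$ equals the span of the $\beta_i^\sharp$ — i.e. that $\cal L_\alpha=\cal D$ pointwise for the Bryant $\alpha$. This is standard, and for Bryant structures it is already recorded in Section \ref{bspsec} (the twistor spinor $\varphi$ satisfies $\cal L_\varphi=\cal D$, and the parallel $4$-form is built from $\varphi$). Hence the proof is a two-line deduction.

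\begin{proof}
By the discussion in Section \ref{bspsec}, the Bryant conformal structure $(M,c_{\cal D})$ carries a parallel tractor $4$-form $\widehat\alpha$, and for any $g\in c$ its splitting \eqref{splitncform} has the form \eqref{alpha} with $\alpha=l_1^\flat\wedge l_2^\flat\wedge l_3^\flat$, where $l_1,l_2,l_3$ is a local frame of $\cal D$. In particular $\alpha$ is a nowhere vanishing decomposable nc-Killing $3$-form, and for $X\in T_xM$ one has $X^\flat\wedge\alpha(x)=0$ if and only if $X\in\cal D_x$. By Proposition \ref{ncform}, every $V\in\mathcal{E}_x$ satisfies $V^\flat\wedge\alpha=0$, hence $V\in\cal D_x$; thus $\mathcal{E}\subset\cal D$. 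Combining this with Theorem \ref{obstrhol}, which gives $\mathrm{Im}(\mathcal O)\subset\mathcal{E}_x$ at each $x\in M$ (equivalently, by Corollary \ref{ncform-cor}, $\mathrm{Im}(\mathcal O)\wedge\alpha=0$ directly forces $\mathrm{Im}(\mathcal O)\subset\cal D$), we conclude $\mathrm{Im}(\mathcal O)\subset\cal D$.
\end{proof}
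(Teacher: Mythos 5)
Your proof is correct and follows essentially the same route as the paper, which obtains the corollary as an immediate consequence of Proposition \ref{ncform} and Corollary \ref{ncform-cor} applied to the decomposable nc-Killing $3$-form $\alpha = l_1^\flat\wedge l_2^\flat\wedge l_3^\flat$ associated to the parallel tractor $4$-form. Your added sentence justifying that the wedge-kernel of a decomposable $3$-form is exactly $\cal D$ is a harmless (and welcome) elaboration of what the paper leaves implicit.
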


Moreover:
\begin{Corollary}\label{bryantcor2}
If  $\mathfrak{hol}(M,c) = \mathfrak{spin}(4,3)$, then $\cal D = \mathcal{E}$ everywhere on $M$.
\end{Corollary}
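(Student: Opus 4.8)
The plan is to reduce the statement to a computation of $\mathfrak{hol}_x\cap\mathfrak g_1$ by means of the parallel spin tractor. By Corollary~\ref{bryantcor1} we already know $\mathcal E_x\subseteq\cal D_x$ at every point, and $\dim\cal D_x=3$, so it suffices to prove that $r^{\cal E}(x):=\dim\mathcal E_x\ge 3$ everywhere, i.e. $r^{\cal E}\equiv 3$. Recall that $\mathcal E_x$ is canonically identified with $\mathfrak{hol}_x(M,c)\cap\mathfrak g_1$, and that by hypothesis $\mathfrak{hol}_x(M,c)=\mathfrak{spin}(4,3)$ for all $x$.

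First I would realise $\mathfrak{spin}(4,3)\subset\mathfrak{so}(4,4)$ as an annihilator. As recalled in Section~\ref{bspsec}, a Bryant conformal structure carries a distinguished parallel spin tractor $\psi$ (equivalently the parallel $4$-form $\widehat\alpha$), and $\mathbf{Spin}(4,3)$ is the stabiliser of a spinor of nonzero norm in the real $8$-dimensional spin representation of $\mathbf{Spin}(4,4)$. Hence under the hypothesis of the corollary the Lie algebra $\mathfrak{hol}_x(M,c)$, being $21$-dimensional and annihilating the parallel spinor $\psi_x$ under the Clifford action of $\mathfrak{so}(\mathcal T_x,h_x)$, is exactly that full annihilator. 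Consequently $\mathcal E_x$ is identified, under $\mathfrak g_1\cong T_xM$, $V\mapsto s_-^\flat\wedge V^\flat$, with $\{\,V\in T_xM:(s_-^\flat\wedge V^\flat)\cdot\psi_x=0\,\}$. It then remains to translate this into the corresponding condition on the twistor spinor $\varphi$ attached to $\psi$: fixing $g\in c$ and decomposing $\psi$ as in \cite{leitnerhabil}, the spinor $\varphi$ is the top slot of $\psi$, $\cal D_x={\cal L}_{\varphi_x}=\{X\in T_xM:X\cdot\varphi_x=0\}$ by \cite{HammerlSagerschnig11}, and $\varphi_x\neq 0$ everywhere since $\cal D={\cal L}_\varphi$ has constant rank $3$. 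Since $s_-$ spans the canonical lightlike line $\mathcal I$ and $s_-\cdot s_-=0$, a short Clifford computation with the lightlike two-form $s_-^\flat\wedge V^\flat\in\mathfrak g_1$ acting on $\psi$ shows that $(s_-^\flat\wedge V^\flat)\cdot\psi_x=0$ precisely when $V\cdot\varphi_x=0$. This gives $\mathcal E_x={\cal L}_{\varphi_x}=\cal D_x$ at every point, as asserted.

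The main obstacle is this last Clifford-algebraic identification: it amounts to making explicit, at the level of parallel spin tractors rather than their $g$-slots, the standard equivalence between twistor spinors and parallel spin tractors, and some care is needed with the conventions for the spin tractor bundle and for the action of $\mathfrak g_1$. If one prefers to avoid it, there is a more self-contained route: since $\{r^{\cal E}\le 2\}$ is open (cf.\ the proof of Theorem~\ref{eadapated}) and $\mathcal E\subseteq\cal D$, it is enough to exclude the existence of an $\mathcal E$-adapted open set $U$ with $r^{\cal E}\le 2$; on such a $U$, Theorem~\ref{eint} (its cases (2) and (3) being impossible here for dimensional reasons) would force $\mathcal E|_U$ to be integrable, whereas Proposition~\ref{ncform} applied to the nc-Killing $3$-form $\alpha=l_1^\flat\wedge l_2^\flat\wedge l_3^\flat$ spanning $\cal D$, together with the fact that $\mathfrak{hol}(M,c)=\mathfrak{spin}(4,3)$ is as large as a proper subalgebra of $\mathfrak{so}(4,4)$ acting irreducibly can be (Lemma~\ref{2128lemma}), forces $\mathcal E|_U$ to fill out the generic, non-integrable distribution $\cal D$ — a contradiction. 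Then $\{r^{\cal E}\le 2\}$, being open and disjoint from the dense set $\{r^{\cal E}=3\}$, is empty, so $\mathcal E=\cal D$ everywhere on $M$. Either way, the substantive input is that the hypothesis $\mathfrak{hol}(M,c)=\mathfrak{spin}(4,3)$ (as opposed to a proper subalgebra) pins down the relative position of the canonical line $\mathcal I_x$ and $\mathfrak{hol}_x$ to be that of Bryant's flat model, on which $\mathcal E=\cal D$.
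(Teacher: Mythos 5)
Your primary argument is correct and is essentially the paper's own proof: since $\mathfrak{hol}(M,c)=\mathfrak{spin}(4,3)$ is exactly the annihilator of the parallel spin tractor $\psi$, one has $\mathcal E_x=\{V:(s_-^\flat\wedge V^\flat)\cdot\psi_x=0\}$, and the correspondence between $\psi$ and the twistor spinor $\varphi$ with ${\cal L}_\varphi=\cal D$ gives $\cal D\subset\mathcal E$, which together with Corollary~\ref{bryantcor1} yields equality. The optional second route you sketch is not needed and, as stated, has a gap --- integrability of $\mathcal E|_U$ together with Proposition~\ref{ncform} and Lemma~\ref{2128lemma} does not by itself force $\mathcal E|_U$ to fill out $\cal D$ --- so I would keep the spin-tractor argument.
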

\bprf
The Lie algebra $\mathfrak{spin}(4,3)$ equals the stabiliser algebra of a spinor $\psi$ of nonzero length in signature $(4,4)$ which corresponds via some $g \in c$ to a twistor spinor $\varphi$ with $L_{\varphi } = \cal D$ at every point (see Section \ref{bspsec}). Thus, $(s_-^{\flat} \wedge l^{\flat} ) \cdot \psi = 0$ for every $l \in \cal D$, i.e. $\cal D \subset \mathcal{E}$. 
\eprf
\bbem
This agrees with the curved orbit decomposition from \cite{cgh}, cf. the  discussion in Section \ref{cosec} for this particular case. Indeed, as discussed in \cite{cgh} for the general case, the curved orbits correspond to the $\Spin(4,3)$-orbits on $\SO(4,4)/\mathrm{Stab}_{\SO(4,4)}(\cal L)$, where $\cal L \subset \rr^{4,4}$ is a null line. However, there is only one such orbit as $\Spin(4,3)$ acts transitively on the projectivised lightcone in $\mathbb{R}^{4,4}$. 
\ebem

\begin{Proposition} \label{spincase}
Assume that $\fhol(M,c)\varsubsetneq \spin(4,3) \subset \fso(4,4)$. Then  $\rk(\mathcal{O}) \leq 1$.
\end{Proposition}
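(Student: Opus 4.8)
\bprf
The plan is to prove the bound in two stages: first show that the conformal holonomy distribution $\cal E$ has rank at most $1$ on an open dense subset of $M$, and then propagate this to $\rk(\mathcal{O})\le 1$ on all of $M$. Throughout we are in the Bryant setting of this subsection, so $c=c_{\cal D}$ for a generic rank-$3$ distribution $\cal D\subset TM$ of signature $(3,3)$ in dimension $6$, and by Corollary~\ref{bryantcor1} we already know $\mathrm{Im}(\mathcal{O}_x)\subset\cal E_x\subset\cal D_x$ for every $x$; in particular $\rk(\mathcal{O})_x\le\dim\cal E_x=r^{\cal E}(x)$, and $\cal E_x\subset\cal D_x$ pointwise.

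By Theorem~\ref{eadapated} it suffices to bound $r^{\cal E}$ on an arbitrary $\cal E$-adapted open set $U$, where $r^{\cal E}\equiv k$ is constant; since $\cal E|_U\subset\cal D|_U$ and $\cal D$ has rank $3$ we have $k\le 3$, and I want to rule out $k\ge 2$. Here Theorem~\ref{eint} is decisive: on $U$ either $\cal E$ is integrable, or $\dim M=5$ and $\cal E$ is a generic rank-$2$ distribution, or $\dim M=6$ and $\cal E$ is a generic rank-$3$ distribution with $\fhol(M,c)=\spin(3,4)$. Since $\dim M=6$ the middle alternative cannot occur, and the last one would force $\fhol(M,c)$ to be all of $\spin(4,3)$ (the subalgebras written $\spin(3,4)$ and $\spin(4,3)$ of $\fso(4,4)$ coincide), contradicting the hypothesis $\fhol(M,c)\varsubsetneq\spin(4,3)$. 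Hence $\cal E|_U$ is integrable.

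The key step is then to see that an integrable $\cal E|_U$ of rank $k\ge 2$ contradicts the genericity of $\cal D$. Picking a local frame of $\cal E|_U$ and, when $k=2$, completing it to a frame $l_1,l_2,l_3$ of $\cal D|_U$ (when $k=3$ take any frame of $\cal D|_U=\cal E|_U$), integrability gives $[l_1,l_2]\in\Gamma(\cal E)\subset\Gamma(\cal D)$. On the other hand $[\cal D,\cal D]+\cal D=TM$ together with the Leibniz rule forces the classes of $[l_1,l_2],[l_1,l_3],[l_2,l_3]$ to span the rank-$3$ quotient $TM/\cal D$, hence to be linearly independent modulo $\cal D$; in particular $[l_1,l_2]\notin\cal D$, a contradiction. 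Therefore $k\le 1$, and consequently $r^{\cal E}\le 1$ on the open dense union $M_0$ of $\cal E$-adapted open sets.

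It remains to pass from $M_0$ to all of $M$. On $M_0$ we have $\rk(\mathcal{O})_x\le r^{\cal E}(x)\le 1$. Since $\mathcal{O}$, viewed as a $(1,1)$-tensor via some $g\in c$, is continuous, the set $\{x\in M:\rk(\mathcal{O})_x\ge 2\}$, cut out by the non-vanishing of some $2\times 2$ minor, is open; being disjoint from the dense set $M_0$, it must be empty, so $\rk(\mathcal{O})\le 1$ everywhere. The only slightly delicate point is precisely this last step: the rank bound is only available off the singular stratification of $\cal E$, so one has to use continuity of $\mathcal{O}$ itself rather than of $x\mapsto\dim\cal E_x$ to reach the exceptional locus; everything else is a direct assembly of Corollary~\ref{bryantcor1}, Theorems~\ref{eadapated} and~\ref{eint}, and the definition of a generic $3$-distribution.
\eprf
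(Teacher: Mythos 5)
Your proof is correct and follows essentially the same route as the paper: restrict to $\cal E$-adapted open sets, combine Theorem \ref{eint} with $\cal E\subset\cal D$ and the genericity of $\cal D$ to exclude $r^{\cal E}\ge 2$ (the non-integrable rank-$3$ alternative being ruled out by the hypothesis $\fhol(M,c)\varsubsetneq\spin(4,3)$), and then extend the bound to all of $M$ by semicontinuity of the rank of $\mathcal{O}$. The only difference is organizational: the paper re-runs the $15$-element dimension count together with Proposition \ref{openorbit} for the rank-$3$ case and uses two separate semicontinuity steps ($\rk(\mathcal{O})\le 2$, then $\rk(\mathcal{O})\le 1$), whereas you bound $r^{\cal E}\le 1$ uniformly by citing the holonomy conclusion of Theorem \ref{eint} and pass to $\mathcal{O}$ once at the end.
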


\bprf
Suppose first that there is an open set $U \subset M$ on which $\mathcal{E}$ has dimension 3, i.e., by Corollary \ref{bryantcor1} we have  $\mathcal{E} = \cal D$ over $U$. By passing to a subset of $U$ if necessary, we may assume that $U$ is a $\mathcal{E}$-adapted open set. Let $V_{i=1,2,3}$ be a pointwise basis of $\mathcal{E}$ over $U$ depending smoothly on $x$. Let $V'_i$ be lightlike vector fields on $U$ such that $g(V_i,V_j') = \delta_{ij}$. We have seen that in this case  the 15 elements in \eqref{15elements}
are pointwise linearly independent in $\mathfrak{hol}(M,c) \cap \mathfrak{p}$. But then it follows immediately from Proposition \ref{openorbit} that dim $\mathfrak{hol}(M,c) \geq 15+6 = 21$, which is the dimension of $\mathfrak{spin}(4,3)$. Thus $\fhol(M,c)$ is no proper subalgebra of $\fso(4,4)$.

We have to conclude that the set on which $r^\cal E \leq 2$ is open and dense in $M$. In particular,  $\mathrm{rk}(\mathcal{O}) < 3$ on an open and dense subset of $M$. However, the set on which  $\rk(\mathcal{O}) < 3$ is also closed and since $M$ is connected it follows that  $\rk(\mathcal{O}) < 3$ on $M$.
Assume next that there is $x \in M$ such that  $\rk(\mathcal{O}) = 2$ at $x$. Since the subset on which  $\rk(\mathcal{O}) \geq 2$ is open in $M$ it follows that  $\mathrm{rk}(\mathcal{O}) = 2 $ on some open set $U$ of $M$. After restricting $U$ we may assume that $U$ is a $\mathcal{E}$-adapted open set and $r^\cal E = 2$ on $U$. Thus, $\mathcal{E}$ is over $U$ a 2-dimensional subbundle of $\cal D$. By Theorem \ref{eint}, $\mathcal{E}$ is integrable over $U$ which contradicts $\cal D$ being generic. Consequently,  $\rk(\mathcal{O}) \leq 1$ everywhere.
\eprf

\begin{bsp}
Proposition \ref{spincase} applies to the situation when $\Hol(M,c)$ lies in the intersection of $Spin(4,3)$ with the stabiliser of a totally degenerate subspace $\cal H \subset \mathbb{R}^{4,4}$. For dim $\cal H=4$, this intersection is isomorphic to

\[\spin(3,4)_{\cal H}=\left\{
\begin{pmatrix}
Z
&X\\0&-Z^\top \end{pmatrix}\mid Z\in \csp_2\rr,\ X\in \fso(4),\tr(X\J )=0 \right\},
\]
where 
\[\J=\begin{pmatrix}0&\mathbf{1}_2\\-\1_2&0\end{pmatrix},\]
and
\[\csp_2\rr=\left\{Z\in \fgl_4\R\mid Z^\top\J+\J Z-\tfrac{1}{2}\tr (Z)\J=0\right\}=\rr\, \1_4\+ \sp_2\rr.\]
Moreover, since 
the Lie group $\Spin(3,4)\subset \SO(4,4)$ corresponding to $\spin(3,4)\subset\so(4,4)$ acts transitively on triples
\[\{ (\s_+,\mathcal H, \s_-)\mid  \mathcal H\text{ a totally null $4$-plane, }\s_+\in \mathcal H, \s_-\in \rr^8\text{  null,  } g(\s_+,{\s_-})=1\},\]
we can express the stabiliser of $\cal H$ in conjunction with the $|1|$-grading $\spin(3,4)=\g_{-1}\+\g_0\+\g_1$
in a basis
$(\s_+,\e_a,\s_-,\e_\aa)$ for $a=1,2,3$ and $\aa=a+3$,
as
\[
\spin(3,4)_{\cal H}=
\left\{
\left(
\begin{array}{cc|cc}
r& w^\top &0&\bar{w}^\top
\\
v^\top & Z & -\bar{w}& X
\\
\hline
0&0& -r&-v
\\
0&0& -w&-Z^\top
\end{array}
\right)
\mid
\begin{array}{l}
w=(w^a)\in \R^3,\ \bar{w}=(\bar{w}^\aa)\in \rr^3,
v=(v_\bb)\in (\rr^3)^*,
\\
X=(X_{\ a}^{\bb})\in \so(3),\ Z=(Z_{\ a}^{b})\in \fgl_3\rr,\\[1mm]
w^{3}=Z_{\ 1}^{2},\ w^{1}=-Z_{\ 3}^{2},\ 
v_1 =-Z_{\ 2}^{3},\  v_3=Z_{\ 2}^{1},
\\[1mm]
r=Z_{\ 1}^{1}-Z_{\ 2}^{2}+Z_{\ 3}^{3},\ 
\bar{w}^2=-X_{\ 3}^{1}.
\end{array}
\right\}.
\]
Here $(r,Z,X)$ corresponds to the $\g_0$ part whereas $(w,\bar{w})$ correspond tot he $\g_{-1}$ and $v$ to the $\g_1$-part. In particular, the intersection $\fp_\cal H$ of $\spin(3,4)_{\cal H}$ with the parabolic $\mathfrak p$ is given by setting $w$ and $\bar{w}$ to zero, and the intersection $\cal E$ of $\spin(3,4)_{\cal H}$ with $\g_1$ by requiring in addition that $X=Z=r=0$. Note that $\cal E$ is one dimensional.

In regards to examples of this situation, we recall that
in \cite{AndersonLeistnerNurowski15}  a certain class of  Bryant's conformal structures was studied. They are defined by  a rank $ 3$ distribution ${\mathcal D}_f$ on 
$\rr^6$ with coordinates $(x^1,x^2,x^3,y^1,y^2,y^3)$ given by the annihilator of three 1-forms
$$\theta_1=d  y^1+x^2d  x^3,\quad\theta_2=d  y^2+fd  x^1,\quad\theta_3=d  y^3+x^1d  x^2,$$
where  $f=f(x^1,x^2,x^3)$ is a differentiable function of the variables $(x^1,x^2,x^3)$ only. It was shown that, whenever $f$ depends only on $x^3$ and $x^1$, the corresponding conformal class contains a metric for which the image of the Schouten tensor lies in a parallel rank $3$ distribution, which implies \cite{Lischewski15}  that 
the conformal holonomy is contained in $\spin(3,4)_{\cal H}$.  
In addition, these conformal structures turned out to have vanishing obstruction tensor, and therefore they admit ambient metrics.
For the conformal class defined by $\cal D_f$ with $f = x^1(x^3)^2$, an ambient metric with holonomy equal to $\spin(3,4)_{\cal H}$ was found, and for this example also the conformal holonomy is equal to $\spin(3,4)_{\cal H}$. 
\end{bsp}
\bbem
We  point out that there is a large class of  examples of Bryant conformal structures with  $f$ depending on  three variables $x^1,x^2,x^3$ for which the obstruction tensor has rank $3$, e.g., the one with $f=x^3+x^1 x^2+(x^2)^2+(x^3)^2$  in \cite{AndersonLeistnerNurowski15}. From our Proposition \ref{spincase} it follows that these examples have $\hol(M,c_{\cal D_f})=\spin(4,3)$.

More difficult is the question of finding examples with $\rk(\cal O)=1$. Of course, a general conformal structure with holonomy $\fsu(2,2)\subset \spin(4,3)$ has  $\rk(\cal O)=1$, but we 
are not aware of an explicit example with $\rk(\cal O)=1$ and   $\fhol(M,c_\cal D)\subset \spin(4,3)_{\cal H}$. Other examples with $\rk(\cal O)=1$, not necessarily with $\fhol(M,c_\cal D)\subset \spin(4,3)$, are given by pp-waves and their generalisation to arbitrary signature \cite{leistner-nurowski08,aipt2}.
\ebem

Finally, Theorem \ref{eint} implies:

\begin{Corollary}
Suppose $(M,c)$ is of signature $(3,3)$ and  $\rk(\mathcal{O}) \leq 3$ on some open set and  $\mathrm{Im}(\mathcal{O})$ is not integrable. Then $\mathfrak{hol}(M,c)$ is either equal to $\mathfrak{so}(4,4)$ or to $ \mathfrak{spin}(4,3)$.
\end{Corollary}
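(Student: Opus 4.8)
The plan is to deduce the statement from Theorem~\ref{obstrhol}, which gives $\mathrm{Im}(\mathcal{O})\subseteq\mathcal{E}$ at every point, together with the trichotomy of Theorem~\ref{eint}. Non-integrability of $\mathrm{Im}(\mathcal{O})$ means that on some open set $\mathrm{Im}(\mathcal{O})$ is a genuine vector distribution of rank $\geq 2$ that fails to be involutive; by hypothesis its rank is also $\leq 3$ there. Shrinking and using Theorem~\ref{eadapated}, I may take this open set to be an $\mathcal{E}$-adapted set $U$. If $\mathfrak{hol}(M,c)$ is generic it equals $\mathfrak{so}(4,4)$ and we are done, so assume it is not; then Proposition~\ref{furr} forces $\mathcal{E}|_U$ to be totally lightlike, hence $r^{\mathcal{E}}\leq 3$ on $U$, and since $\mathrm{Im}(\mathcal{O})\subseteq\mathcal{E}|_U$ has rank $\geq 2$ while Theorem~\ref{eint} forces a rank-$2$ holonomy distribution in dimension $6$ to be integrable (so it would equal the non-involutive $\mathrm{Im}(\mathcal{O})$), we get $r^{\mathcal{E}}\equiv 3$ on $U$, i.e.\ $\mathcal{E}|_U$ is a maximal totally null distribution.

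Applying Theorem~\ref{eint} to $U$, if $\mathcal{E}|_U$ is not integrable then, $\dim M$ being $6$, we are in the exceptional case and $\mathfrak{hol}(M,c)=\mathfrak{spin}(3,4)$ (the $\mathfrak{spin}(4,3)$ of the statement), as desired. If $\mathcal{E}|_U$ is integrable and $\mathrm{Im}(\mathcal{O})=\mathcal{E}|_U$ we again contradict non-involutivity. The only case left is that $\mathrm{Im}(\mathcal{O})$ is a rank-$2$, non-involutive subdistribution of the integrable rank-$3$ distribution $\mathcal{E}|_U$, and this is the heart of the matter. I would handle it with the dimension count from the proof of Theorem~\ref{eint}: the identities \eqref{commhol}, \eqref{bracket}, \eqref{ine} hold for any sections of $\mathcal{E}$, so for a frame $(V_1,V_2,V_3,V_1',V_2',V_3')$ of $TM$ with $\mathcal{E}=\langle V_1,V_2,V_3\rangle$ the fifteen elements \eqref{15elements} are pointwise independent in $\mathfrak{hol}(M,c)\cap\mathfrak{p}$; since $r^{\mathcal{E}}\equiv 3\geq 2$ on $U$, Proposition~\ref{openorbit} gives that $\mathrm{Hol}^0(M,c)$ has an open orbit on the lightcone, hence $\mathfrak{so}(4,4)=\mathfrak{hol}(M,c)+\mathfrak{p}$ and $\dim\mathfrak{hol}(M,c)\geq 15+6=21$.

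Finally I would identify which holonomy algebras of dimension $\geq 21$ can occur. If $\mathfrak{hol}(M,c)$ acts irreducibly on $\mathbb{R}^{4,4}$, Lemma~\ref{2128lemma} yields $\mathfrak{hol}(M,c)=\mathfrak{spin}(3,4)$. If it acts reducibly it preserves a subspace: a nondegenerate one forces $\mathcal{O}=0$ by Corollary~\ref{splitnd-cor}, contradicting $\mathrm{rk}(\mathcal{O})\geq 2$, while in the totally lightlike case Corollary~\ref{splitnd-cor} produces a parallel null distribution containing $\mathrm{Im}(\mathcal{O})$, and the dimension bound confines $\mathfrak{hol}(M,c)$ to the stabiliser of a maximal totally null $4$-plane $\mathcal{H}\subset\mathbb{R}^{4,4}$, a $22$-dimensional algebra --- since $\mathfrak{spin}(4,3)$ is transitive on such $4$-planes, $\mathfrak{spin}(4,3)\cap\mathfrak{so}(4,4)_{\mathcal{H}}$ has dimension $16$, so this case cannot lie inside $\mathfrak{spin}(4,3)$ and must be excluded separately, most naturally via the normal-form computations of Section~\ref{spin43}. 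I expect this reducible maximal-null branch to be the main obstacle: it is precisely the gap between $\mathrm{Im}(\mathcal{O})$ and $\mathcal{E}$ when the two differ, and the slickest way around it would be to show directly --- from \eqref{eq1}--\eqref{eq2} together with $\mathrm{div}\,\mathcal{O}=0$ and $\mathrm{tr}_g\mathcal{O}=0$ --- that a non-involutive $\mathrm{Im}(\mathcal{O})$ must already exhaust $\mathcal{E}$, after which Theorem~\ref{eint} applies verbatim.
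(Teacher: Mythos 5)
Your reduction to the holonomy distribution is exactly the paper's: $\mathrm{Im}(\mathcal{O})\subset\mathcal{E}$ from Theorem \ref{obstrhol}, restriction to an $\mathcal{E}$-adapted open set, exclusion of $r^{\mathcal{E}}=2$ because a rank-$2$ holonomy distribution in dimension $6$ is integrable by Theorem \ref{eint} and would then coincide with the non-involutive $\mathrm{Im}(\mathcal{O})$, hence $r^{\mathcal{E}}\equiv 3$, and then an appeal to Theorem \ref{eint}. The paper's own proof is essentially just this case distinction on $\rk(\mathcal{O})\in\{2,3\}$: in the rank-$3$ branch $\mathrm{Im}(\mathcal{O})=\mathcal{E}$, so non-integrability of $\mathrm{Im}(\mathcal{O})$ is non-integrability of $\mathcal{E}$ and Theorem \ref{eint} applies directly, while in the rank-$2$ branch it concludes $\dim\mathcal{E}\geq 3$ and invokes Theorem \ref{eint} without entering the Berger-type analysis you attempt.

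However, as submitted your proposal is not a complete proof: you yourself isolate the residual case --- $\mathcal{E}|_U$ integrable of rank $3$ strictly containing a rank-$2$ non-involutive $\mathrm{Im}(\mathcal{O})$ --- and do not close it. Your dimension count (the fifteen elements \eqref{15elements} together with Proposition \ref{openorbit} give $\dim\fhol(M,c)\geq 21$), the irreducible case via Lemma \ref{2128lemma}, and the exclusion of a nondegenerate invariant subspace via Corollary \ref{splitnd-cor} are all sound; but the reducible branch, in which $\fhol(M,c)$ would be a $21$- or $22$-dimensional subalgebra of the stabiliser of a maximal totally null $4$-plane, is precisely the configuration in which the asserted dichotomy would fail, since such an algebra is neither $\fso(4,4)$ nor $\spin(3,4)$ (whose intersection with that stabiliser is only $16$-dimensional, as computed in Section \ref{spin43}). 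You only state that this branch ``must be excluded separately'' and sketch two possible strategies (normal-form computations, or showing that a non-involutive $\mathrm{Im}(\mathcal{O})$ must exhaust $\mathcal{E}$ using $\mathrm{tr}_g\mathcal{O}=0$ and $\div\,\mathcal{O}=0$) without carrying either out; until one of them is executed, the conclusion is not established. To your credit, this is exactly the step the paper compresses into ``the statement follows from Theorem \ref{eint}'' in its rank-$2$ branch, but identifying the delicate point is not the same as resolving it, so the proposal stands with a genuine gap.
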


\bprf
From the assumptions,  $\rk(\mathcal{O}) \geq 2$ on an open set. If  $\rk(\mathcal{O}) = 2$ on an open set, it follows from Theorem \ref{eint} that $\mathcal{E}$ must have dimension at least 3 on this set. Otherwise the image of $\mathcal{O}$ would be integrable. But then the statement follows from  Theorem \ref{eint}. Otherwise the set on which  $\rk(\mathcal{O}) \geq 3$ is open and dense and the statement is an immediate consequence of Theorem~\ref{eint}.
\eprf




\small
\bibliographystyle{abbrv}
\bibliography{literatur2,geobib}

\end{document}